\documentclass[12pt]{amsart}
\usepackage{multirow,bigdelim}
\usepackage{amsfonts}
\usepackage{dsfont}
\usepackage{amsmath}
\usepackage{mathrsfs}
\usepackage{todonotes}
\usepackage{hyperref}
\usepackage{multimedia}
\usepackage{graphicx}
\usepackage{indentfirst}
\usepackage{bm}
\usepackage{amsthm}
\usepackage{mathrsfs}
\usepackage{latexsym}
\usepackage{amsmath}
\usepackage{amssymb}
\usepackage{microtype}
\usepackage{relsize}
\usepackage{hyperref}

\newcommand{\A}{\mathcal{A}}

\newcommand{\N}{\mathcal{N}}


\def\A{\mathcal A}
\def\B{\mathcal B}

\def\B{\mathcal B}

\def\H{\mathcal H}

\def\M{\mathcal M}
\def\N{\mathcal N}

\def\NNN{\mathbb N}

\def\amslatex{$\mathcal{A}\kern-.1667em\lower.5ex\hbox{$M$}\kern-.125em\mathcal{S}$-\LaTeX}

\voffset = -56pt \hoffset = -48pt
\textwidth = 6.675in

\linespread{1.2}
\textheight = 23.5cm
\DeclareSymbolFont{SY}{U}{psy}{m}{n}
\DeclareMathSymbol{\emptyset}{\mathord}{SY}{'306}

\theoremstyle{plain}

\newtheorem{thm}{Theorem}[section]
\newtheorem{cor}[thm]{Corollary}
\newtheorem{lem}[thm]{Lemma}
\newtheorem{prop}[thm]{Proposition}
\theoremstyle{definition}
\newtheorem{defn}[thm]{Definition}
\newtheorem{rem}[thm]{Remark}

\numberwithin{equation}{section}




\pagestyle{myheadings}

\begin{document}
\title{On finite sums of projections and Dixmier's averaging theorem for type ${\rm II}_1$ factors}
\author{ Xinyan Cao, Junsheng Fang and Zhaolin Yao}
\curraddr[X. Cao and J. Fang]{School of Mathematical Sciences, Hebei Normal University,
Shijiazhuang, Hebei 050016, China}
\curraddr[Z. Yao]{Postdoctoral Research Station of Mathematics, Hebei Normal University, Shijiazhuang, Hebei 050016, China}
\email[X. Cao]{caoxinyan123@qq.com}
\email[J.Fang]{jfang@hebtu.edu.cn}
\email[Z.Yao]{zyao@hebtu.edu.cn}
\email[Z. Yao]{}

\thanks{The authors are supported  by National Natural
Science Foundation of China (Grant No.12071109) and a start up funding from Hebei Normal Univeristy.}

\subjclass[2010]{46L10, 46L36} \keywords{finite sums of projections, Dixmier's averaging theorem, type ${\rm II}$ factors}
\begin{abstract}
Let $\M$ be a type ${\rm II_1}$ factor and let $\tau$ be the faithful normal tracial state on $\M$. In this paper, we prove that given an $X \in \M$, $X=X^*$, then there is a decomposition
of the identity into $N \in \NNN$ mutually orthogonal nonzero projections $E_j\in\M$, $I=\sum_{j=1}^NE_j$, such that $E_jXE_j=\tau(X) E_j$ for all $j=1,...,N$. Equivalently,  there is a
unitary operator $U \in \M$ with $U^N=I$ and
$\frac{1}{N}\sum_{j=0}^{N-1}{U^*}^jXU^j=\tau(X)I.$ As the first application, we prove that a positive operator $A\in \M$ can be written as a finite sum of projections in $\M$ if and only if
$\tau(A)\geq \tau(R_A)$, where $R_A$ is the range projection of $A$. This result answers affirmatively Question 6.7 of \cite{HKNZ}. As the second application, we show that if $X\in \M$,
$X=X^*$ and $\tau(X)=0$, then there exists a nilpotent element $Z \in \M$ such that $X$ is the real part of $Z$. This result answers affirmatively Question 1.1 of \cite{DFS}. As the third
application, we show that for $X_1,...,X_n\in \M$, there exist unitary operators $U_1,...,U_k\in\M$ such that
$
\frac{1}{k}\sum_{i=1}^kU_i^*X_jU_i=\tau(X_j)I,\quad \forall 1\leq j\leq n.
$
This result is a stronger version of Dixmier's averaging theorem for type ${\rm II}_1$ factors.

\end{abstract}

\maketitle
\section{Introduction}

Which positive bounded linear operators on a separable Hilbert space can be written as strong sums of projections? If the underlying space is finite dimensional, then a characterization of such
operators was obtained by Fillmore~\cite{Fil}: a finite-dimensional operator is the sum of projections if and only if it is positive, it has an integer trace and the trace is greater than or
equal to its rank.

The same question can be asked relative to a von Neumann algebra $\M$. This question was first asked by Kaftal, Ng and Zhang in~\cite{KNZ}. In~\cite{KNZ}, they developed
beautiful techniques and addressed the question completely for positive operators in type ${\rm I}$ and type ${\rm III}$ factors. They proved the following results. Let $\M$ be a type ${\rm
I}$ factor. Given $A \in \M^{+}$. Denote by $A_+=(A-I)\chi_A(1,\|A\|]$ the excess part of $A$ and by $A_-=(I-A)\chi_A(0,1)$ the defect part of $A$. Then $A$ is a strong sum of projections if and only if either ${\rm Tr}(A_+)=\infty$ or ${\rm Tr}(A_-)\leq {\rm Tr}(A_+)<\infty$ and ${\rm Tr}(A_+)-{\rm Tr}(A_-)\in
\NNN\cup\{0\}$. Let $\M$ be a type ${\rm III}$ factor. Then $A\in\M$ is a strong sum of projections if and only if either $\|A\|>1$ or $A$ is a projection. For type ${\rm II}$ factors, they proved that if $A$ is
diagonalizable, then $A$ is a strong sum of projections if and only if  $\tau(A_+)\geq \tau(A_-)$. The condition  $\tau(A_+)\geq \tau(A_-)$ is necessary even when $A$ is not diagonalizable.
Recall that a positive operator $A$ is said to be diagonalizable if $A=\sum \gamma_j E_j$ for some $\gamma_j>0$ and mutually orthogonal projections $\{E_j\}$ in $\M$.
 In \cite{CFY}, we proved that if $\M$ is a type ${\rm II}$ factor, $A\in \M^+$ and $\tau(A_+)\geq \tau(A_-)$, then $A$ can be written as the sum of a finite or infinite collection of
 projections.

 A natural question is the following: which positive operators in a factor von Neumann algebra can be written as finite sums of projections? This question is much more interesting and difficult. Note that the question is even open
 for positive operators in $\B(\H)$. One of the main goals of this paper is to study this question in type ${\rm II}_1$ factors.

As reported in a survey article [\cite{Wu}, Theorem 4.12] by Wu quoting unpublished joint work of Choi and Wu
in 1988, they proved that positive operators with essential norm greater than 1 are finite sums of projections.
This result is presented together with other related results in their recent paper [\cite{C-W}, Theorem 2.2]. The special
case of $\alpha I$ with $\alpha>1$ follows also from a delicate analysis which characterizes the numbers $\alpha$ for which $\alpha I$
is a sum of at least $n$ projections (see in ~\cite{KRS} and several other papers.)

\cite{KNZ3} presented a characterization of strict sums of projections in the multiplier algebras of $\sigma$-unital purely
infinite simple $C^*$-algebras and~\cite{KNZ1} provided sufficient conditions for operators to be finite sums of projections
in the multiplier algebras of $\sigma$-unital purely infinite simple $C^*$-algebras.

In \cite{HKNZ}, H. Halpern, V. Kaftal, P. Ng and S. Zhang provided some sufficient conditions and also a necessary condition for a positive operator can be written as a finite sum of
projections. For a type ${\rm II}_1$ factor, they showed a sufficient condition for a positive diagonalizable operator $A$ to be a finite sum of projections is $\tau(A_+)>\tau(A_-)$ or
equivalently $\tau(A)>\tau(R_A)$.

In this paper, we prove the following result.
\begin{thm}[Main Theorem]

Let $(\M,\tau)$ be a type ${\rm II}_1$ factor and let $X=X^*\in\M$. Then
\begin{enumerate}
\item there is a decomposition of the identity into $N\in \NNN$ mutually orthogonal nonzero projections $E_j$, $I=\sum_{j=1}^NE_j$, for which $E_jXE_j=\tau(X) E_j$ for all $j=1,...,N$;
\item there is a finite dimensional abelian von Neumann algebra $\B \subseteq \M$ such that
      \[E_{\B^\prime \cap \M}(X)=\tau(X)I, \]
      where $E_{\B^\prime \cap \M}$ is the conditional expectation from $\M$ onto $\B^\prime \cap \M$;
\item there is a unitary operator $U \in \M$ with
\[\frac{1}{N}\sum_{j=0}^{N-1}{U^*}^jXU^j=\tau(X)I;\]
\item there is a unitary operator $W \in \M$ with $W^N=I$ and
\[\frac{1}{N}\sum_{j=0}^{N-1}{W^*}^jXW^j=\tau(X)I.\]
      \end{enumerate}
\end{thm}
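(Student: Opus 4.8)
\medskip
\noindent\textsf{Proof proposal.}\quad
The plan is to reduce the whole statement to item~(1); items (2)--(4) then follow by soft arguments. Replacing $X$ by $X-\tau(X)I$ we may assume $\tau(X)=0$, since the identity $E_jXE_j=\tau(X)E_j$ is unaffected by this substitution. Suppose (1) holds, say $I=\sum_{j=1}^NE_j$ with $E_jXE_j=0$ for all $j$, and set $\B=\{E_1,\dots,E_N\}''\cong\CCC^N$. Then $\B^\prime\cap\M=\bigoplus_{j=1}^NE_j\M E_j$ is the algebra of block-diagonal operators, whence $E_{\B^\prime\cap\M}(X)=\sum_jE_jXE_j=0=\tau(X)I$; this is (2), and conversely, multiplying $E_{\B^\prime\cap\M}(X)=\tau(X)I$ by an atom $E_k$ of $\B$ on both sides gives $E_kXE_k=\tau(X)E_k$, so (1)$\Leftrightarrow$(2). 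For (3) and (4) put $\omega=e^{2\pi i/N}$ and $W=\sum_{j=1}^N\omega^{j-1}E_j$; this is a unitary with $W^N=\sum_j(\omega^{j-1})^NE_j=I$, and for every $Y\in\M$
\[
\frac1N\sum_{k=0}^{N-1}{W^*}^kYW^k=\sum_{j,l=1}^N\Bigl(\frac1N\sum_{k=0}^{N-1}\omega^{(l-j)k}\Bigr)E_jYE_l=\sum_{j=1}^NE_jYE_j=E_{\B^\prime\cap\M}(Y),
\]
by orthogonality of the characters of $\mathbb Z/N\mathbb Z$. Applying this to $Y=X$ gives (4), and taking $U=W$ gives (3). (Any zero $E_j$'s are discarded; the degenerate case $X=\tau(X)I$ is handled by splitting $I$ into any two nonzero projections.) So it remains to prove~(1).

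\medskip
A natural first case is that of $X$ with finite spectrum whose spectral projections have rational trace: write $X=\sum_{i=1}^m\lambda_iP_i$ with $\sum_iP_i=I$, $\tau(P_i)=d_i/d$, and $\sum_id_i=d$. Since $\M$ is a type ${\rm II}_1$ factor, $I$ can be written as a sum of $d$ mutually equivalent projections of trace $1/d$, grouped so that each $P_i$ is a sum of $d_i$ of them; choosing partial isometries between these yields a unital copy $M_d(\CCC)\subseteq\M$ with matrix units $\{e_{kl}\}$, and under the resulting isomorphism $\M\cong M_d(\CCC)\tensor\bigl(e_{11}\M e_{11}\bigr)$ we have $X=D\otimes 1$ for a diagonal self-adjoint $D\in M_d(\CCC)$ with $\mathrm{tr}(D)=d\,\tau(X)=0$. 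By the classical Schur--Horn theorem, since $\mathrm{tr}(D)=0$ there is a unitary $u\in M_d(\CCC)$ such that $u^*Du$ has all diagonal entries equal to $0$; equivalently, there is an orthonormal basis $\xi_1,\dots,\xi_d$ of $\CCC^d$ with $\langle D\xi_j,\xi_j\rangle=0$ for all $j$. Then $E_j:=(\xi_j\xi_j^*)\otimes 1$ are mutually orthogonal projections with $\sum_jE_j=I$ and
\[
E_jXE_j=\langle D\xi_j,\xi_j\rangle\,(\xi_j\xi_j^*)\otimes 1=0,
\]
so (1) holds with $N=d$, and even with all $E_j$ of equal trace.

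\medskip
For a general self-adjoint $X$ with $\tau(X)=0$ --- where the spectrum may be continuous and the traces of spectral projections irrational --- I would prove (1) by approximation combined with an iterative perturbation. Given $\eps>0$, approximate $X$ in norm by a finite-spectrum self-adjoint $X_0$ with $\tau(X_0)=0$, and (passing to a matrix model as above, enlarging the corner rather than demanding rational traces) obtain from the finite-dimensional Schur--Horn input mutually orthogonal projections $E_1^{(0)},\dots,E_N^{(0)}$ with $\sum_jE_j^{(0)}=I$ and $E_j^{(0)}X_0E_j^{(0)}=0$, so that $\|E_j^{(0)}XE_j^{(0)}\|=\|E_j^{(0)}(X-X_0)E_j^{(0)}\|<\eps$. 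The crux is then a one-step \emph{improvement lemma}: a ``$\delta$-approximate'' decomposition (mutually orthogonal projections $\{E_j\}$ with $\sum_jE_j=I$ and $\max_j\|E_jXE_j\|\le\delta$) can be replaced, \emph{keeping $N$ fixed}, by an $O(\delta^2)$-approximate one $\{E_j'\}$ with $\max_j\|E_j'-E_j\|=O(\delta)$. Granting this, iterating from $\{E_j^{(0)}\}$ yields a norm-Cauchy sequence of partitions of the identity whose limit $\{E_j\}$ satisfies $E_jXE_j=0$, i.e.\ (1).

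\medskip
To prove the improvement lemma one splits the diagonal error as $E_jXE_j=(E_jXE_j-c_jE_j)+c_jE_j$ with $c_j=\tau(E_jXE_j)/\tau(E_j)$; here $|c_j|\le\delta$, and $\sum_j\tau(E_j)c_j=\sum_j\tau(E_jXE_j)=\tau(X)=0$. One first rotates each $E_j$ inside the type ${\rm II}_1$ factor $E_j\M E_j$, by a unitary close to $E_j$ produced from a Schur--Horn argument applied to a finite-spectrum approximant of $E_jXE_j-c_jE_j$, so as to kill its trace-zero part to second order; one then performs a small inter-block rotation to absorb the scalars $c_jE_j$, which is possible precisely because the weighted scalars $\tau(E_j)c_j$ sum to $0$. (One might instead start from the Schur--Horn theorem for type ${\rm II}_1$ factors to obtain a unitary $V$ with $E_{\A}(V^*XV)=0$ for a masa $\A$, and then try to discretize, but this meets the same difficulty.) I expect the delicate bookkeeping here --- controlling the perturbation sizes so that they remain summable while the errors contract to $0$ with $N$ held fixed --- to be the principal obstacle; the rest (the equivalences, the matrix model, the classical Schur--Horn input) is routine.
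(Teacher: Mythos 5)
Your reduction of (2)--(4) to (1) is correct and essentially the same soft argument as the paper's Lemmas 3.1--3.2, and your treatment of the finite-spectrum, rational-trace case via a matrix model and the classical Schur--Horn theorem is fine. But the heart of the theorem is the general self-adjoint $X$ (continuous spectrum, irrational traces), and there your proof rests entirely on the unproved ``improvement lemma,'' which you yourself flag as the principal obstacle. Worse, that lemma is false as stated. Take $X=c_1E_1+c_2E_2$ with $E_1+E_2=I$, $0<\tau(E_1)<1$, $c_1\tau(E_1)+c_2\tau(E_2)=0$ and $0<|c_1|,|c_2|\le\delta$ with $c_1\ne c_2$: this is a $\delta$-approximate decomposition, yet for any projections $E_j'$ with $\|E_j'-E_j\|\le K\delta$ one has $E_1'XE_1'=c_1E_1'+O(\delta^2)$, so $\|E_1'XE_1'\|\gtrsim|c_1|\sim\delta$, not $O(\delta^2)$. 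Killing a nearly block-diagonal configuration with small but distinct block constants forces the projections to move by an amount bounded below independently of $\delta$ (one needs $E_1'E_1E_1'\approx\tau(E_1)E_1'$, not $\approx E_1'$), so a Newton-type iteration with $N$ fixed and summable perturbations cannot work in this naive form; also note that conjugating by a unitary of $E_j\M E_j$ fixes $E_j$ and hence cannot ``rotate $E_j$'' at all. So the passage from the finite-spectrum case to general $X$ is a genuine gap, not bookkeeping.

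The paper's route is entirely different and avoids any limiting process on partitions. After the shift $X\mapsto X+\|X\|I$ one may assume $X\ge 0$ and, after scaling, $\tau(X)=1$; the paper's Lemma 3.2 shows that the desired decomposition $\sum_jE_jXE_j=I$ is \emph{equivalent} to $X$ being a finite sum of $N$ projections (via $P_j=X^{1/2}E_jX^{1/2}$ in one direction and a polar-decomposition argument in the other). The real work is then Lemma 3.20: every $X\ge 0$ with $\tau(X)=1$ is a finite sum of projections. This is proved by splitting $X$ into its parts above and below $1$, writing each as a strong (infinite) sum of projections with controlled spectral behaviour, and combining the HKNZ machinery (positive combinations of locally invertible elements, Lemma 6.1 of HKNZ, the weighted sums $(1+\mu)E+(1-\lambda)F$) with careful trace bookkeeping to convert the infinite tails into finite sums. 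If you want to salvage your approach, the missing bridge is exactly this equivalence with finite sums of projections and a proof of the finite-sum statement; approximation of the partition itself does not converge.
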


This result is inspired by \cite{C-W}. To prove this theorem, we develop some new techniques in this paper. On the other hand, several results, e.g., Proposition 2.3, Corollary 2.5, Lemma
2.8 and Lemma 6.1, in~\cite{HKNZ} play important roles in the proof.

We mention the following results as applications of the main theorem.

\begin{thm}
Let $(\M,\tau)$ be a type ${\rm II}_1$ factor and $A\in \M^+$. Then $A$ is a finite sum of projections if and only if $\tau(A)\geq \tau(R_A)$.
\end{thm}

 The above theorem  answers affirmatively Question 6.7 of \cite{HKNZ}. The following theorem answers affirmatively Question 1.1 in \cite{DFS}.

\begin{thm}
Let $(\M,\tau)$ be a type ${\rm II}_1$ factor and $X\in \M$, $X=X^*$ and $\tau(X)=0$. Then there exists a nilpotent element $Z \in \M$ such that $X=Re Z$, i.e., $X$ is the real part of a nilpotent element $Z$.
\end{thm}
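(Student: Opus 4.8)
The plan is to deduce this statement quickly from part (1) of the Main Theorem by writing $X$ as a block matrix with vanishing diagonal and taking $Z$ to be (twice) its strictly upper-triangular part.

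First, since $\tau(X)=0$, part (1) of the Main Theorem produces mutually orthogonal nonzero projections $E_1,\dots,E_N\in\M$ with $\sum_{j=1}^N E_j=I$ and $E_jXE_j=0$ for every $j$. Expanding $X=\sum_{i,j=1}^N E_iXE_j$ and deleting the vanishing diagonal terms gives $X=\sum_{i\neq j}E_iXE_j$. I would then define
\[
Z=2\sum_{1\le i<j\le N}E_iXE_j\in\M .
\]
Using $X=X^*$ one gets $Z^*=2\sum_{i<j}E_jXE_i=2\sum_{i>j}E_iXE_j$, hence
\[
Z+Z^*=2\sum_{i\neq j}E_iXE_j=2X ,
\]
so that the real part $\tfrac12(Z+Z^*)$ of $Z$ equals $X$, as desired.

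It remains to verify that $Z$ is nilpotent. Since $E_iXE_j\cdot E_kXE_\ell$ vanishes unless $j=k$ (by $E_jE_k=\delta_{jk}E_j$), an induction shows that $Z^m=2^m\sum E_{i_1}XE_{i_2}X\cdots E_{i_m}XE_{i_{m+1}}$, summed over strictly increasing chains $i_1<i_2<\cdots<i_{m+1}$ in $\{1,\dots,N\}$. No such chain exists once $m\ge N$, so $Z^N=0$ and $Z$ is nilpotent.

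In short, all the real content lies in the Main Theorem; this deduction is essentially bookkeeping. The only points requiring care are the orthogonality relations used when powering $Z$, and the observation that the projections $E_j$ need not have equal trace — the argument does not use that. (One could instead package the construction through the order-$N$ unitary $W$ of part (4), but the block-matrix picture is the most transparent.) The hypothesis $\tau(X)=0$ is exactly what forces the diagonal blocks $E_jXE_j$ to vanish, which is what makes $Z$ strictly triangular and hence nilpotent; I do not expect any genuine obstacle here.
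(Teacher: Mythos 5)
Your proposal is correct and follows essentially the same route as the paper: invoke part (1) of the Main Theorem to get $I=\sum_j E_j$ with $E_jXE_j=0$, then take the strictly upper-triangular block part of $X$, which is nilpotent because strictly increasing index chains in $\{1,\dots,N\}$ terminate. The only (harmless) difference is normalization: the paper takes $Z=\sum_{i<j}E_iXE_j$ and writes $X=Z+Z^*$, while you insert the factor $2$ to match the convention $\mathrm{Re}\,Z=\tfrac12(Z+Z^*)$.
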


The following theorem is a stronger version of Dixmier's averaging theorem for type ${\rm II}_1$ factors.
\begin{thm}
Let $(\M,\tau)$ be a type ${\rm II}_1$ factor and let $X_1,...,X_n\in \M$. Then there exists unitary operators $U_1,...,U_k\in\M$ such that
\[
\frac{1}{k}\sum_{i=1}^kU_i^{*}X_jU_i=\tau(X_j)I,\quad \forall 1\leq j\leq n.
\]
\end{thm}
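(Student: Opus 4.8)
The plan is to induct on $n$, with the Main Theorem serving as the case $n=1$. As a preliminary reduction, note that $Y\mapsto\frac1k\sum_{i=1}^kU_i^*YU_i$ is real-linear and fixes every scalar operator, so writing $X_j=\operatorname{Re}X_j+i\operatorname{Im}X_j$ shows it suffices to produce a single family $U_1,\dots,U_k$ that works simultaneously for a finite list of self-adjoint elements, with the target being $\frac1k\sum_iU_i^*X_jU_i=\tau(X_j)I$. The case $n=1$ of this is precisely part (3) of the Main Theorem: from a unitary $U\in\M$ with $\frac1N\sum_{j=0}^{N-1}{U^*}^jX_1U^j=\tau(X_1)I$, take $\{U_1,\dots,U_k\}=\{I,U,\dots,U^{N-1}\}$.

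For the inductive step, assume the assertion for any $n-1$ self-adjoint elements of any type ${\rm II}_1$ factor, and run three successive rounds of conjugation-averaging. In Round 1, apply part (2) of the Main Theorem to $X_n$ to obtain a finite-dimensional abelian $\B\subseteq\M$ with minimal projections $f_1,\dots,f_m$ (summing to $I$) such that $E_{\B'\cap\M}(X_n)=\sum_lf_lX_nf_l=\tau(X_n)I$, that is, $f_lX_nf_l=\tau(X_n)f_l$ for each $l$. The expectation $E_{\B'\cap\M}$ is itself a finite average of unitary conjugations: with $\zeta=e^{2\pi i/m}$ and $U_p:=\sum_l\zeta^{pl}f_l\in\U(\M)$ one checks $\frac1m\sum_{p=0}^{m-1}U_p^*YU_p=\sum_lf_lYf_l$ for all $Y\in\M$. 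Hence averaging over $\{U_p\}$ sends $X_n\mapsto\tau(X_n)I$ and sends each $X_j$ ($j<n$) to $Y_j:=\sum_lf_lX_jf_l$, which lies in $\B'\cap\M=\bigoplus_lf_l\M f_l$. In Round 2, each corner $f_l\M f_l$ is again a type ${\rm II}_1$ factor, so the inductive hypothesis there produces unitaries of $f_l\M f_l$ whose conjugation-average carries $f_lX_1f_l,\dots,f_lX_{n-1}f_l$ to their corner-trace scalars $\tau(f_l)^{-1}\tau(f_lX_jf_l)\,f_l$ while fixing $\tau(X_n)f_l$. Padding these $m$ families to a common length $Q$ by repetition and taking direct sums over $l$ yields unitaries $V_s\in\U(\M)$ ($1\le s\le Q$), supported in $\B'\cap\M$, whose conjugation-average carries $Y_j\mapsto D_j:=\sum_l\tau(f_l)^{-1}\tau(f_lX_jf_l)\,f_l\in\B$ (all off-diagonal terms drop out because the corners are mutually orthogonal) and fixes $\tau(X_n)I$; moreover $\tau(D_j)=\tau\bigl(E_{\B'\cap\M}(X_j)\bigr)=\tau(X_j)$.

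In Round 3, the elements $D_1,\dots,D_{n-1}\in\B$ are self-adjoint, so one further application of the inductive hypothesis, this time inside $\M$, gives unitaries $W_1,\dots,W_r\in\U(\M)$ whose conjugation-average sends each $D_j$ to $\tau(D_j)I=\tau(X_j)I$ and fixes the scalar $\tau(X_n)I$. Composing the three averaging maps multiplies the unitary families: if $\Phi_t(Y)=\frac1{k_t}\sum_aA^{(t)}_a{}^*YA^{(t)}_a$, then $\Phi_3\circ\Phi_2\circ\Phi_1$ is conjugation-averaging over the products $A^{(1)}_aA^{(2)}_bA^{(3)}_c$. Therefore the family $\{U_pV_sW_i\}$, of size $mQr$, satisfies $\frac1{mQr}\sum_{p,s,i}(U_pV_sW_i)^*X_j(U_pV_sW_i)=\tau(X_j)I$ for every $1\le j\le n$, and undoing the initial reduction to self-adjoint elements completes the induction.

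The substantive input is entirely the Main Theorem, which is assumed; given it, the argument above is largely organizational. I expect the one point genuinely requiring attention to be the observation that Rounds 1--2 only drive the operators $X_j$ ($j<n$) into the finite-dimensional abelian algebra $\B$ rather than all the way to their trace scalars, which is exactly why the third application of the inductive hypothesis in Round 3 is necessary; the remaining items --- that $\B'\cap\M$ is a finite direct sum of type ${\rm II}_1$ factors so the hypothesis applies cornerwise, that the off-diagonal contributions vanish, and that composing unitary averages amounts to multiplying the families --- are routine.
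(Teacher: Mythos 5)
Your proof is correct, and its engine is the same as the paper's: repeatedly turn one operator into its trace scalar by a finite average of unitary conjugations, use that scalars are fixed by any further conjugation-average, and compose the unitary families by taking products. The difference is in how the inductive step is organized. The paper's step is leaner: it applies the Main Theorem (part (3)) to $X_1$ to get unitaries $V_1,\dots,V_r$, replaces $X_2$ by its average $A=\frac1r\sum_i V_i^*X_2V_i$ (which is self-adjoint with $\tau(A)=\tau(X_2)$), applies the Main Theorem to $A$, and composes; induction then handles $n$ operators. You instead pass through part (2): realize $E_{\B'\cap\M}$ as an average of $m$ conjugations (correct, and it is essentially the computation in Lemma 3.1 of the paper), then do a cornerwise application of the inductive hypothesis in each $f_l\M f_l$ (Round 2), then a second application in $\M$ (Round 3). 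This works, but Round 2 is redundant: after Round 1 the elements $Y_j=\sum_l f_lX_jf_l$ are just $n-1$ self-adjoint elements of $\M$ with $\tau(Y_j)=\tau(X_j)$, so one application of the inductive hypothesis to $Y_1,\dots,Y_{n-1}$ in $\M$ (your Round 3, applied directly to the $Y_j$) already finishes the step --- and in fact one does not even need to send $X_n$ to a scalar first via part (2); averaging the remaining operators along the unitaries produced for the current one, as the paper does, avoids the corner bookkeeping (padding families to a common length, forming block-diagonal unitaries) altogether. What your extra rounds buy is only the mild structural remark that the $X_j$ can first be compressed into $\B'\cap\M$ and then into $\B$; the final conclusion is the same, with a larger unitary family.
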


The organization of this paper is as follows. Section 2 below is devoted to some definitions and lemmas which will be very useful. In section 3, we prove the main theorem stated as above. As
applications, we prove Theorem 1.2 and Theorem 1.3. In section 4, we give some properties of majorization and provide the third application of the main
theorem.

We refer to~\cite{C-W,G-P,KNZ1,KNZ2,KNZ3,KNZ4,KRS2,Mar,Nak} for interesting results on the related topics. We refer to~\cite{K-R,S-S,S-Z,Tak} for the basic knowledge of von Neumann
algebras.

\emph{Notation}. For every $x\in \mathbb{R}$, we will denote by $\lceil x \rceil$ the smallest integer $n\geq x$.

Throughout this paper, $(\M,\tau)$ is a type ${\rm II}_1$ factor with the faithful normal tracial state $\tau$. $E,F,E^{\prime},F^{\prime},p,q$ will denote projections in $\M$. $\M^+$ is the set of positive
operators in $\M$. $\NNN$ denotes the set of non-negative integers and $\NNN^*$ is the set of strictly positive integers.

For a selfadjoint element $A \in \M$, $\chi_A$ will denote the (projection-valued) spectral measure of $A$ and $R_A$ the range projection of $A$.

\section{Definitions and Preliminary lemmas}

\begin{defn}[Definition 2.2 in \cite{HKNZ}]
An operator algebra $\M$ has constants $N_0$ and $V_0$ if every selfadjoint operator $a\in \M$ can be decomposed into a real linear combination $a=\sum_{j=1}^{N_0}\alpha_j p_j$ of $N_0$
projections $p_j\in \M$ such that
\[
\sum_{j=1}^{N_0}|\alpha_j| \leq V_0 \|a\|.
\]
\end{defn}

The existence and the estimates of the constants $N_0$ and $V_0$ for $\rm II_1$ factors are given in Theorem 1-3 in \cite{G-P}.
If $\M$ is of type $\rm II_1$ factor, then $N_0=12$ and $V_0=14$.

\begin{defn}
A linear combination is called positive combinations of projections if a positive operator is finite linear combination of projections with positive coefficients.
\end{defn}

\begin{prop}[Proposition 2.3 in \cite{HKNZ}]\label{prop1: proposition 2.3}
Let $\M$ be a type $\rm II_1$ factor. Assume that $a \in \M^+$  is invertible. Then $a$ is a positive combination of $N_0+\lceil V_0(\|a\|\|a^{-1}\|-1)\rceil+1$ projections in
$\M$.
\end{prop}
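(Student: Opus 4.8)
The plan is to combine the rescaling freedom with the short decomposition furnished by Definition~2.1, after first slicing the spectrum of $a$ so as to control an error term. Since $a\in\M^+$ is a positive combination of projections exactly when $ta$ is, for every $t>0$, and since $\|a\|\,\|a^{-1}\|$ is unchanged by such a rescaling, I would first normalize so that $\min\mathrm{spec}(a)=1$; then $a\ge I$ and $\|a\|=\kappa:=\|a\|\,\|a^{-1}\|$. If $\kappa=1$ then $a=I$ and there is nothing to prove, so assume $\kappa>1$ and write $a=I+c$ with $c:=a-I\in\M^+$ and $\|c\|=\kappa-1$. It then suffices to exhibit $I+c$ as a positive combination of $N_0+\lceil V_0(\kappa-1)\rceil+1$ projections.

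The one algebraic identity used repeatedly is $\beta p=\beta I-\beta(I-p)$ for a projection $p\in\M$ and $\beta\in\RRR$: when $\beta<0$ it replaces the negative term $\beta p$ by $-|\beta|I+|\beta|(I-p)$. Hence, once one has written $a$ in the form $\gamma I+\sum_j\beta_jq_j$ with $q_j$ projections, all $\beta_j\in\RRR$, and $\gamma\ge\sum_{\beta_j<0}|\beta_j|$, rewriting the negative terms and collecting the coefficients of $I$ produces a genuine positive combination of at most $1+(\text{number of }q_j)$ projections. In $a=I+c$ the available ``$\gamma$'' is just $1$, whereas feeding $c$ to Definition~2.1 generates negative coefficient mass as large as $V_0\|c\|$; so this works at once only when $\|c\|\le 1/V_0$, and the construction is designed to reduce to that case.

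To do so I would peel the spectrum of $c$ into thin layers via the layer-cake identity $c=\int_0^{\|c\|}\chi_c(t,\infty)\,dt$. Put $N:=\lceil V_0(\kappa-1)\rceil+1$, $t_i:=i(\kappa-1)/N$ for $0\le i\le N$, and
\[
c_{\mathrm{low}}:=\frac{\kappa-1}{N}\sum_{i=1}^{N-1}\chi_c(t_i,\infty)\in\M^+ .
\]
Because $t\mapsto\chi_c(t,\infty)$ is nonincreasing, comparing $\int_{t_{i-1}}^{t_i}\chi_c(t,\infty)\,dt$ with its endpoint values gives $0\le c-c_{\mathrm{low}}\le\frac{\kappa-1}{N}R_c$, so $r:=c-c_{\mathrm{low}}\in\M^+$ with $\|r\|\le\frac{\kappa-1}{N}\le\frac1{V_0}$. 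Now $a=(I+r)+c_{\mathrm{low}}$, where $c_{\mathrm{low}}$ is already a positive combination of the $N-1$ projections $\chi_c(t_i,\infty)$. For $I+r$ I apply Definition~2.1 to the selfadjoint $r$, obtaining $r=\sum_{j=1}^{N_0}\alpha_jp_j$ with $\sum_j|\alpha_j|\le V_0\|r\|\le1$; then
\[
I+r=\Bigl(1-\sum_{\alpha_j<0}|\alpha_j|\Bigr)I+\sum_{\alpha_j>0}\alpha_jp_j+\sum_{\alpha_j<0}|\alpha_j|(I-p_j),
\]
whose coefficient of $I$ is $\ge0$, exhibiting $I+r$ as a positive combination of at most $N_0+1$ projections. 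Adding back $c_{\mathrm{low}}$ yields at most $(N_0+1)+(N-1)=N_0+\lceil V_0(\kappa-1)\rceil+1$ projections, as required (after discarding any zero terms).

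The main obstacle is precisely the tension between the fixed inflation factor $V_0>1$ in Definition~2.1 and the fact that, after normalization, invertibility of $a$ supplies only a single copy of $I$ against which to cancel negative coefficients; a single invocation of Definition~2.1 would therefore cover only $\kappa\le 1+1/V_0$, and slicing the spectrum of $c$ into $\sim V_0\|c\|$ layers is exactly what trades this deficiency for the stated linear-in-$\kappa$ count. The remaining points — that the $\chi_c(t_i,\infty)$ lie in $\M$, and that every coefficient in the final expression is strictly positive — are routine.
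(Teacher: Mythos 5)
Your proposal is correct, but there is nothing in this paper to measure it against: Proposition \ref{prop1: proposition 2.3} is imported verbatim from \cite{HKNZ} and used as a black box (e.g.\ in Lemma \ref{lem2.13:1+ mu X posi line comb K depe mu}), so the authors give no proof here. Judged on its own, your argument goes through and delivers exactly the stated constant. The normalization is legitimate since both the property of being a positive combination of a given number of projections and the quantity $\|a\|\|a^{-1}\|$ are invariant under multiplication by a positive scalar, and after it $a=I+c$ with $c\ge 0$, $\|c\|=\kappa-1$. The layer-cake step is sound: from $\chi_c(t_i,\infty)\le \chi_c(t,\infty)\le \chi_c(t_{i-1},\infty)$ for $t\in[t_{i-1},t_i]$ and $\chi_c(t_N,\infty)=0$ one gets $c_{\mathrm{low}}\le c\le c_{\mathrm{low}}+\frac{\kappa-1}{N}R_c$, so $r:=c-c_{\mathrm{low}}\in\M^+$ with $V_0\|r\|\le V_0\frac{\kappa-1}{N}<1$ because $N=\lceil V_0(\kappa-1)\rceil+1>V_0(\kappa-1)$; hence after applying Definition 2.1 to $r$ the coefficient $1-\sum_{\alpha_j<0}|\alpha_j|$ of $I$ is nonnegative (indeed positive), and $\beta p=\beta I-\beta(I-p)$ turns $I+r$ into a positive combination of at most $N_0+1$ projections, all of which lie in $\M$ by Borel functional calculus. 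The final count $(N_0+1)+(N-1)=N_0+\lceil V_0(\|a\|\|a^{-1}\|-1)\rceil+1$ matches the proposition, with the usual reading that ``a positive combination of $K$ projections'' means at most $K$ nonzero terms (and in a type ${\rm II}_1$ factor one can always pad to exactly $K$ by splitting a projection into two nonzero subprojections carrying the same coefficient). So the only caveat is bibliographic, not mathematical: this is a reconstruction of the result of \cite{HKNZ}, not of an argument contained in the present paper.
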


\begin{defn}
A positive operator $a$ such that $a \geq \nu R_a$ for some $\nu>0$ is invertible in the reduced algebra $R_a\M R_a$. We call such an element \emph{locally invertible}.
\end{defn}

\begin{lem}[Lemma 2.4 in \cite{HKNZ}]\label{lem1: lemma 2.4}
Let $a\in \M^+$. Then $a$ is a positive combination of projections in $\M$ if and only if $a$ is a positive combination of projections in $R_a \M R_a$.

\end{lem}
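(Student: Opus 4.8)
The plan is to handle the two implications separately; one is essentially free and the other needs only a short support-projection argument.

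First I would dispose of the ``if'' direction. If $a=\sum_{j=1}^{n}\lambda_j p_j$ with $\lambda_j>0$ and each $p_j$ a projection in the reduced algebra $R_a\M R_a$, then since $R_a\M R_a\subseteq\M$ every $p_j$ is in particular a projection in $\M$, so $a$ is automatically a positive combination of projections in $\M$ and there is nothing to prove.

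For the ``only if'' direction I would begin with a decomposition $a=\sum_{j=1}^{n}\lambda_j p_j$, $\lambda_j>0$, $p_j$ projections in $\M$, and argue that automatically $p_j\leq R_a$ for every $j$. The key observation is that, as $a\in\M^{+}$, its range projection $R_a$ is also its support projection, so $aq=qa=0$ where $q:=I-R_a$; in particular $qaq=0$. Compressing the decomposition of $a$ by $q$ yields $\sum_{j=1}^{n}\lambda_j\,qp_jq=0$. Each term $qp_jq=(qp_j)(qp_j)^{*}$ is positive and $\lambda_j>0$, so a sum of positive operators equal to zero forces $qp_jq=0$ for every $j$, whence $qp_j=0$, whence (taking adjoints) $p_jq=0$, and therefore $p_j=R_ap_jR_a\in R_a\M R_a$. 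This exhibits $a=\sum_{j=1}^{n}\lambda_j p_j$ as a positive combination of projections in $R_a\M R_a$, as required.

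I do not anticipate a genuine obstacle here. The only points needing (standard) care are the two uses of positivity: that the range projection of the positive operator $a$ annihilates $a$ from both sides, and that a finite sum of positive operators with strictly positive coefficients vanishes only when every summand vanishes. Neither step uses anything special about type ${\rm II}_1$ factors, so the statement in fact holds in any von Neumann algebra.
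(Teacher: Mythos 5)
Your proof is correct: the ``if'' direction is immediate since projections of $R_a\M R_a$ are projections of $\M$, and your compression-by-$q=I-R_a$ argument correctly forces each $p_j\leq R_a$, so $a$ is a positive combination of projections in the corner. Note that the paper does not prove this lemma at all — it is quoted verbatim from Lemma 2.4 of \cite{HKNZ} — and your argument is exactly the standard one, valid in any von Neumann algebra, so there is nothing to reconcile.
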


\begin{cor}[Corollary 2.5 in \cite{HKNZ}]\label{cor1: Corollary 2.5}
Let $a \in \M$ and assume that it is locally invertible, i.e., $a \geq \nu R_a$ for some $\nu > 0$. Then $a$ is a positive combination of
$N_0+\lceil V_0(\frac{\|a\|}{\nu}-1)\rceil+1$ projections in $\M$.
\end{cor}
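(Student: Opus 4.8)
The plan is to deduce this from Proposition~\ref{prop1: proposition 2.3} by passing to the reduced algebra $\M_0:=R_a\M R_a$. We may assume $a\neq 0$ (if $a=0$ the statement is vacuous); then $\tau(R_a)>0$, so $\M_0$ is again a type ${\rm II}_1$ factor with the same universal constants $N_0=12$ and $V_0=14$, and its identity element is $R_a$.

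First I would check that, regarded as an element of $\M_0$, the operator $a$ is positive and invertible with good norm control. Indeed, $a=R_aaR_a$ gives $\|a\|_{\M_0}=\|a\|$, while the hypothesis $a\geq\nu R_a$ together with the spectral theorem shows that the spectrum of $a$ in $\M_0$ is contained in $[\nu,\|a\|]$; hence $a$ is invertible in $\M_0$ and $\|a^{-1}\|_{\M_0}\leq 1/\nu$. Consequently $\|a\|_{\M_0}\,\|a^{-1}\|_{\M_0}\leq \|a\|/\nu$.

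Now apply Proposition~\ref{prop1: proposition 2.3} inside $\M_0$: the positive invertible element $a\in\M_0$ is a positive combination of $N_0+\lceil V_0(\|a\|_{\M_0}\|a^{-1}\|_{\M_0}-1)\rceil+1$ projections of $\M_0$. Since $t\mapsto\lceil t\rceil$ is nondecreasing and $\|a\|_{\M_0}\|a^{-1}\|_{\M_0}\leq\|a\|/\nu$, this number is at most $K:=N_0+\lceil V_0(\|a\|/\nu-1)\rceil+1$ (note $\|a\|/\nu\geq 1$, so $K\geq N_0+1$); if it is strictly less than $K$, we enlarge the combination to exactly $K$ terms by repeatedly splitting one occurring projection $p$ as $p=p'+p''$ with $p',p''$ nonzero orthogonal subprojections, which is possible because $\M_0$ is of type ${\rm II}_1$ and at least one projection occurs (as $a\neq 0$). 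Finally, projections of $\M_0=R_a\M R_a$ are projections of $\M$, so this exhibits $a$ as a positive combination of $K$ projections of $\M$; equivalently, one invokes the ``if'' direction of Lemma~\ref{lem1: lemma 2.4}. There is no serious obstacle in this argument; the only points that need care are the verification that $\M_0$ is again a type ${\rm II}_1$ factor (so that Proposition~\ref{prop1: proposition 2.3}, with the same $N_0,V_0$, applies to it) and the monotonicity bookkeeping for the ceiling function to land on the stated count.
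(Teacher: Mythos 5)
Your argument is correct and is exactly the intended derivation: the paper simply quotes this corollary from \cite{HKNZ}, where it is obtained precisely by viewing $a$ as an invertible element of the type ${\rm II}_1$ factor $R_a\M R_a$ with $\|a^{-1}\|\leq 1/\nu$, applying Proposition~\ref{prop1: proposition 2.3} there, and transferring back via Lemma~\ref{lem1: lemma 2.4}. Your extra care about monotonicity of the ceiling and about padding the count by splitting a projection is fine but not essential to the statement.
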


\begin{lem}[Lemma 2.8 in \cite{HKNZ}]\label{lem1: lemma 2.8}
Let $e$ and $f$ be orthogonal projections in $\M$ with $e \prec f$. Let $b=be=eb$, $d=df=fd$ be positive elements in $\M$ such that $d \geq \nu f$ for some $\nu >0$. Then $a:=b+d$ is a
positive combination of projections in $\M$.

\end{lem}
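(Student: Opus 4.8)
The plan is to isolate the only genuine obstruction --- the part of $b$ whose spectrum reaches down to $0$ --- and repair it using the ``room'' provided by $e\prec f$ together with the strict lower bound $d\ge\nu f$. First I would cut $b$ by a spectral threshold strictly below $\nu$: set $g=\chi_{[\nu/2,\infty)}(b)$, $e_1=e-g$, and $b_2=bg$, $b_1=b(e-g)$, so that $b=b_1+b_2$, $\beta_1:=\|b_1\|\le\nu/2$, $b_1=e_1b_1e_1$, and $b_2\ge\frac{\nu}{2}g=\frac{\nu}{2}R_{b_2}$. The piece $b_2$ is locally invertible, hence already a positive combination of projections by Corollary~\ref{cor1: Corollary 2.5}. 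Since the supports $g$, $e_1$, $f$ are mutually orthogonal, $a=b_2+(b_1+d)$ is a genuine sum of positive operators, so it suffices to show that $b_1+d$ is a positive combination of projections; that is, I have reduced to the case in which the ``$e$-part'' has norm at most $\nu/2$. (If $e_1=0$ or $\beta_1=0$ the operator $b_1+d$ is already locally invertible and the statement is immediate.)

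For this reduced problem I would use a two-projection trick to transfer $b_1$ into $f$. Since $e\prec f$ gives $\tau(e_1)\le\tau(e)\le\tau(f)$, choose $f_0\le f$ with $f_0\sim e_1$ via a partial isometry $v$ with $v^*v=e_1$ and $vv^*=f_0$. Put $t=b_1/\beta_1$, so $0\le t\le e_1$ and $\sqrt{t-t^2}$ is well defined. In the $2\times2$ picture of $(e_1+f_0)\M(e_1+f_0)$ afforded by $v$, the element with $e_1$-corner $t$, $f_0$-corner $v(e_1-t)v^*$, and $e_1$--$f_0$ block $\sqrt{t-t^2}\,v^*$ is a projection $P$. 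Conjugating by the self-adjoint unitary $u_0=I-2f_0$ reverses the sign of the $e_1$--$f_0$ off-diagonal blocks while fixing both diagonal corners, so $u_0Pu_0$ is again a projection and $\frac{\beta_1}{2}\big(P+u_0Pu_0\big)$ is the diagonal operator $b_1\oplus(\beta_1 f_0-vb_1v^*)$, realized as a positive combination of the two projections $P$ and $u_0Pu_0$.

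Subtracting this from $b_1+d$ leaves $D:=(b_1+d)-\frac{\beta_1}{2}(P+u_0Pu_0)=d-\beta_1 f_0+vb_1v^*$, which is supported on $f$. Here the spectral cut pays off: since $\beta_1\le\nu/2$ and $d\ge\nu f$, one has $d-\beta_1 f_0\ge\nu f-\frac{\nu}{2}f_0\ge\frac{\nu}{2}f$, whence $D\ge\frac{\nu}{2}f+vb_1v^*\ge\frac{\nu}{2}f$. Thus $D$ is locally invertible with $R_D=f$, and is a positive combination of projections by Corollary~\ref{cor1: Corollary 2.5}. Adding back, $b_1+d=\frac{\beta_1}{2}P+\frac{\beta_1}{2}u_0Pu_0+D$, and hence $a=b_2+(b_1+d)$, are positive combinations of projections, as required.

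I anticipate the main obstacle being precisely the preservation of positivity at each stage. Naively realizing all of $b$ inside a single projection fails, because such a projection forces an $e$--$f$ off-diagonal term that $a=b+d$ does not possess, destroying positivity of the remainder; cancelling that cross term is exactly why two reflected copies $P$ and $u_0Pu_0$ are averaged. Likewise the remainder $D$ is locally invertible only because the cut was taken strictly below $\nu$: at $\beta_1=\nu$ the operator $vb_1v^*$, whose spectrum may reach $0$, would be the sole content of $D$ on $f_0$, and local invertibility would fail. Keeping $\beta_1\le\nu/2$ is what lets $d\ge\nu f$ dominate $\beta_1 f_0$ and restores the lower bound, so that Corollary~\ref{cor1: Corollary 2.5} applies to $D$.
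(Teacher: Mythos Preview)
Your argument is correct. The paper does not supply its own proof of this lemma---it is quoted verbatim from \cite{HKNZ}---but the technique you use is exactly the one the authors reproduce in Lemma~\ref{lem2.13:1+ mu X posi line comb K depe mu}, whose proof they say ``comes from the proof of Lemma~\ref{lem1: lemma 2.8} in~\cite{HKNZ}.'' There one sees the same pair of projections (called $q_\pm$ rather than $P,\,u_0Pu_0$) built from $b/\|b\|$ and a partial isometry into $f$, the same averaging identity $\tfrac{\|b\|}{2}(q_-+q_+)=b+\|b\|f_0-VbV^*$, and the same locally invertible remainder handled by Corollary~\ref{cor1: Corollary 2.5}. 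Your reflection $u_0=I-2f_0$ is just another way to produce $q_-$ from $q_+$.

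The one extra ingredient you supply is the preliminary spectral cut at $\nu/2$. This is genuinely needed in the generality of Lemma~\ref{lem1: lemma 2.8}: without it the remainder $d-\|b\|f_0+VbV^*$ can fail to be bounded below on $f_0$ when $\|b\|>\nu$, since $VbV^*$ may have spectrum down to $0$. In the paper's Lemma~\ref{lem2.13:1+ mu X posi line comb K depe mu} that cut is unnecessary only because the specific hypotheses there force $\|b\|\le 1<1+\mu$, so $d-\|b\|E'\ge\mu E$ automatically. Your cut restores exactly this margin in the general case, and the rest of your proof goes through as written.
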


\begin{lem}[Lemma 6.1 in \cite{HKNZ}]\label{lem1: lemma 6.1}
Let $e$ and $f$ be mutually orthogonal projections in $\M$, $0 \leq \lambda \leq 1$, and $\mu >0$, and $a:=(1+\mu)e+(1-\lambda)f$. If $\mu \tau(e)> \lambda \tau(f)$, then $a$ is a finite
sum
of projections in $\M$.
\end{lem}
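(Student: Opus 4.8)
The plan is to pass to the corner on which $a$ has full support and then to realize $a$ as a sum of genuine projections by transporting the excess carried by $e$ onto the defect carried by $f$, using the strict inequality as the feasibility margin. First I would replace $\M$ by the reduced factor $\N=(e+f)\M(e+f)$. Since $R_a=e+f$, Lemma~\ref{lem1: lemma 2.4} reduces the problem to expressing $a$ as a finite sum of projections inside $\N$, so I may assume $e+f=I_\N$ and rescale $\tau$; the hypothesis becomes the strict trace surplus $\tau(a)>1=\tau(R_a)$, i.e.\ $\mu\tau(e)>\lambda\tau(f)$. The degenerate cases $\lambda\in\{0,1\}$ collapse $a$ to $f+(1+\mu)e$ or to $(1+\mu)e$, where the nontrivial summand is a scalar $cI$ with $c=1+\mu\ge 1$ on a reduced factor; such an operator is a finite sum of projections by a standard tight-frame construction. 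So I assume $0<\lambda<1$, whence $a\ge(1-\lambda)I$ is locally invertible.

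For the core, I would seek projections $P_1,\dots,P_n$ with $\sum_i P_i=a$, viewing the requirement as a coupling between $e$ and $f$: restricted to $f$ the projections must sum to $(1-\lambda)f$ (so each $fP_if\le f$ is tilted below $f$), restricted to $e$ they must sum to $(1+\mu)e$, and the off-diagonal couplings $eP_if$ are the free parameters that make each $P_i$ an honest projection. This is precisely a transportation of the excess mass $\mu\tau(e)$ on $e$ onto the defect $\lambda\tau(f)$ on $f$, and by hypothesis supply strictly exceeds demand. Concretely I would match subprojections of $e$ and $f$ (of equal, or when $\mu\le\lambda$ suitably unequal, trace) through partial isometries to form finitely many $M_2$-blocks, on each of which $a$ acts as a locally invertible diagonal $(1+\mu)e_0+(1-\lambda)f_0$; a rotation then produces tilted projections lowering $e_0$ and filling $f_0$. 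Corollary~\ref{cor1: Corollary 2.5} certifies each such locally invertible block is a positive combination of projections, and in the genuinely hard subregime $\tau(e)<\tau(f)$—where the tall excess on the small projection $e$ must be spread across $\lceil\tau(f)/\tau(e)\rceil$ chunks of $f$ at full height—each block has $e_0\prec f_0$, so Lemma~\ref{lem1: lemma 2.8} applies directly. The leftover excess of $e$ not needed to meet the defect is a scalar $cI$, $c\ge 1$, on a reduced factor, hence a finite sum of projections; and in the subregime $\tau(e)\ge\tau(f)$ one can peel honest full projections off $e$ to shorten the construction, but only while the trace surplus stays nonnegative.

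The main obstacle is twofold, and both parts are exactly where the strict inequality is consumed. First, operator-order control: every partial sum must remain $\le a$, so the rotation angles and chunk sizes must be calibrated so that each projection genuinely lies under the current remainder and the defect on $f$ is filled but never overfilled; the slack $\mu\tau(e)-\lambda\tau(f)>0$ furnishes precisely this room, and careless peeling of full copies of $e$ would destroy the surplus and hence the necessary trace condition. Second, and more delicate, exactness: Corollary~\ref{cor1: Corollary 2.5} and Lemma~\ref{lem1: lemma 2.8} only yield positive combinations, and promoting these to a sum with every coefficient equal to one—matching each leftover sub-unit coefficient against surplus excess and routing it into full projections or symmetric pairs with nothing stranded below norm one—is the endgame. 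Under equality this matching would be forced to the boundary and honest coefficient-one projections could fail to suffice, which is exactly why the strict version $\mu\tau(e)>\lambda\tau(f)$ is isolated as this lemma.
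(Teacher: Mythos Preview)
The paper does not supply its own proof of this lemma; it is quoted from \cite{HKNZ} as a preliminary result (their Lemma~6.1), so there is no argument in the present paper to compare your attempt against.

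On its own terms, your proposal is a plan rather than a proof, and it does not close the gap you yourself name. In the second paragraph you invoke Corollary~\ref{cor1: Corollary 2.5} and Lemma~\ref{lem1: lemma 2.8} on the $M_2$-blocks, but as you correctly observe in the final paragraph, both tools produce only \emph{positive combinations} of projections, not sums with every coefficient equal to one. You then write that ``promoting these to a sum with every coefficient equal to one \dots is the endgame,'' yet you never carry out that endgame: you give no mechanism for absorbing coefficients strictly between $0$ and $1$ into honest projections, and no argument that the fixed surplus $\mu\tau(e)-\lambda\tau(f)>0$ is enough to compensate for residual coefficients coming out of a positive-combination decomposition whose shape you do not control. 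The earlier sentence ``a rotation then produces tilted projections'' hints at a direct construction of genuine projections in each block, but it is not made precise and is immediately undercut by the appeal to Corollary~\ref{cor1: Corollary 2.5} in the next clause. So the decisive step---upgrading positive combinations to coefficient-one sums---is flagged but not executed, and the argument as written is incomplete.
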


\begin{lem}[Lemma 3.2 in \cite{CFY}]\label{lem1: lemma 3.2}
Let $(\M,\tau)$ be a type ${\rm II}_1$ factor and let $a\in \M^+$ such that $\tau(a)=1$. Then $a$ is a strong sum of projections.
\end{lem}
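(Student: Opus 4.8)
Here is how I would go about proving the lemma.

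The plan is to exhibit $a$ as a (finite or countable) sum of projections converging strongly to it, produced by peeling finite sums of projections off $a$ one block at a time and invoking the structural lemmas above; the hypothesis $\tau(a)=1$ is what controls the excess/defect balance throughout. First I dispose of the trivial case: since $a\le\|a\|I$ and $\tau$ is a faithful state, $1=\tau(a)\le\|a\|$, and if $\|a\|=1$ then $I-a\ge 0$ has $\tau(I-a)=0$, so $a=I$ is a single projection. Assume henceforth that $\|a\|>1$, and set $e=\chi_a((1,\|a\|])\ne 0$ and $f=I-\chi_a([1,\|a\|])=\chi_a([0,1))$, which are orthogonal. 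With the excess $a_+$ and defect $a_-$ defined as in the introduction, one has $\tau(a_+)-\tau(a_-)=\tau(a)-\tau(R_a)=1-\tau(R_a)\ge 0$, with equality exactly when $R_a=I$; the case $R_a\ne I$ is the favourable one (strict inequality, the ``open'' situation), so the genuinely new content is the boundary case $R_a=I$, where $\tau(a_+)=\tau(a_-)$.

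The heart of the argument is an induction producing positive operators $a^{(0)}=a, a^{(1)}, a^{(2)},\dots$ and finite sums of projections $\sigma_n\ge 0$ with $\sigma_n\le a^{(n)}$, $a^{(n+1)}=a^{(n)}-\sigma_n$, $R_{a^{(n+1)}}\le R_{a^{(n)}}$, while keeping the necessary condition $\tau(a^{(n)})\ge\tau(R_{a^{(n)}})$ in force at every stage (this invariant is precisely what forbids getting stuck at a nonzero operator of norm $<1$). At stage $n$, I compare the spectral projection of $a^{(n)}$ above $1$ with the one below $1$ using the comparison theory of projections in $\M$, split off matching orthogonal spectral subprojections $e'$ (on which $a^{(n)}>1$) and $f'$ (on which $a^{(n)}<1$), and arrange that the compression $(e'+f')a^{(n)}(e'+f')$ falls under Lemma~\ref{lem1: lemma 2.8} (when the piece over $f'$ is bounded below and $e'\prec f'$) or, in the two-level situation, under Lemma~\ref{lem1: lemma 6.1}; Lemma~\ref{lem1: lemma 2.4} lets one work in the relevant reduced algebra and Corollary~\ref{cor1: Corollary 2.5} (via Proposition~\ref{prop1: proposition 2.3}) disposes of the locally invertible pieces. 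Peeling such a block off as $\sigma_n$ keeps the remainder positive, and with the cuts chosen compatibly it preserves $\tau(a^{(n)})\ge\tau(R_{a^{(n)}})$, while $\tau(a^{(n)})$ is driven to $0$ (or the process terminates with some $a^{(N)}$ already a finite sum of projections). Since $b_N:=a-\sum_{n<N}\sigma_n=a^{(N)}$ is a decreasing sequence of positive operators, it converges strongly to some $b\ge 0$; by normality of $\tau$, $\tau(b)=\lim_N\tau(a^{(N)})=0$, and faithfulness gives $b=0$. Hence the projections appearing in all the $\sigma_n$ form a strong sum equal to $a$.

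The step I expect to be the main obstacle is precisely the boundary case $R_a=I$ (so $\tau(a_+)=\tau(a_-)$), where the relevant two-level blocks sit on the boundary $\mu\tau(e')=\lambda\tau(f')$ at which Lemma~\ref{lem1: lemma 6.1} fails, since it requires the strict inequality. One must show directly that a balanced two-level operator $(1+\mu)e'+(1-\lambda)f'$ (orthogonal projections, $\mu\tau(e')=\lambda\tau(f')$) is a finite sum of projections and, more delicately, organize the peeling so that the excess is spent \emph{exactly}, with no orphan corner $cf''$, $0<c<1$, left behind --- such a corner is never a sum of projections, its defect having positive trace while its excess has trace $0$. When $e'\sim f'$ (so $\mu=\lambda$), the balanced case is a short computation: in the $2\times 2$ corner $\left(\begin{smallmatrix}1+\mu & 0\\ 0 & 1-\mu\end{smallmatrix}\right)$ over $e'\M e'$ the matrix $\left(\begin{smallmatrix}c & \sqrt{c(1-c)}\\ \sqrt{c(1-c)} & 1-c\end{smallmatrix}\right)$ with $c=\tfrac{1+\mu}{2}\le 1$ is a projection, and its complement in that corner has trace $1$ and determinant $0$, hence is also a projection, so the operator is a sum of two projections; the case $\tau(e')\ne\tau(f')$ then reduces to this together with Lemma~\ref{lem1: lemma 6.1} by splitting $e'$ and $f'$ compatibly so that excess matches defect piece by piece. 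Making this local bookkeeping globally consistent, and guaranteeing the decay $\tau(a^{(n)})\to 0$ so that the naive failure mode of greedy spectral peeling --- getting stuck at a nonzero operator of norm $<1$, e.g.\ one whose spectral distribution is uniform on $[0,1]$ --- is avoided, is the technical core of the proof.
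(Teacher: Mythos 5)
This lemma is not proved in the paper at all: it is quoted verbatim from \cite{CFY} (Lemma 3.2 there), so there is no internal argument to measure yours against, and a from-scratch proof would have to carry the full weight of that cited result. Judged on its own terms, your proposal is a strategy rather than a proof. The correct parts are the bookkeeping $\tau(a)=1\Rightarrow\tau(a_+)\ge\tau(a_-)$, the final limiting step (decreasing positive remainders, normality and faithfulness of $\tau$), and the explicit $2\times 2$ computation for the balanced block $(1+\mu)e'+(1-\mu)f'$ with $\tau(e')=\tau(f')$ --- though that computation needs $c=(1+\mu)/2\le 1$, i.e. $\mu\le 1$, so spectrum above $2$ is not covered. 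What is missing is the inductive step on which everything hinges: for a general, non-diagonalizable $a$ whose spectral measure is diffuse on both sides of $1$, there are no two-valued spectral pieces, so neither Lemma~\ref{lem1: lemma 6.1} nor your $2\times2$ trick applies to any compression of $a$; producing families of projections $p_i,q_j$ so that the excess part becomes $(1+\mu)\sum_i p_i$ and the defect part $(1-\lambda)\sum_j q_j$ is essentially the content of the lemma you are trying to prove (it is exactly how the present paper deploys it in Remark~\ref{rem2.10: x geq 0 constru}), so the reduction to two-level blocks cannot simply be invoked.

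Three further points make the peeling scheme fail as written. First, Lemma~\ref{lem1: lemma 2.8} and Corollary~\ref{cor1: Corollary 2.5} yield \emph{positive combinations} $\sum_i\alpha_ip_i$ with arbitrary coefficients $\alpha_i>0$, not sums of projections, so blocks handled by those results cannot serve as your $\sigma_n$; converting a positive combination with the right trace into an honest sum of projections is precisely the hard work (in the finite-sum setting it occupies all of Section~3 of this paper). Second, $a^{(n)}-(e'+f')a^{(n)}(e'+f')$ is positive only when $e'+f'$ commutes with $a^{(n)}$; if you force commutation by choosing $e',f'$ in a masa containing $a^{(n)}$, the compressions are no longer two-level and you are back to the previous difficulty, while choosing them only as subprojections of spectral projections destroys positivity of the remainder. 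Third, the decay $\tau(a^{(n)})\to 0$ and the global matching that prevents an orphan corner $cf''$, $0<c<1$, are asserted, not established --- you yourself label them the technical core. In short, the skeleton (peel, preserve the trace inequality, let normality finish) is reasonable, but the construction of the projections, which is the substance of \cite{CFY}, is absent.
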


\section{Main theorem}

The following lemma is motivated by Proposition 1.5 of \cite{C-W} and its proof is similar to the proof of Proposition 1.5 of \cite{C-W}. We  provide a complete proof  for the reader's convenience.

\begin{lem}\label{lem2.1:TFAE}
Let $X\in \M$. Then the following conditions are equivalent.
\begin{enumerate}
\item There is a decomposition of the identity into $N \in \NNN^*$ mutually orthogonal nonzero projections $E_j$, $I=\sum_{j=1}^NE_j$, for which $E_jXE_j=\tau(X) E_j$ for all $j=1,...,N$;
\item There is a unitary operator $U \in \M$ with
\[\frac{1}{N}\sum_{j=0}^{N-1}{U^*}^jXU^j=\tau(X)I.\]
\end{enumerate}
\end{lem}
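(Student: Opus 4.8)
The plan is to prove the two implications separately, using a cyclic shift unitary in one direction and a spectral/averaging argument in the other. For the implication $(1)\Rightarrow(2)$: given the orthogonal decomposition $I=\sum_{j=1}^N E_j$ with $E_jXE_j=\tau(X)E_j$, I would first note that all the $E_j$ are equivalent in $\M$, since they are nonzero projections in a type ${\rm II}_1$ factor \emph{and} — this is the point that needs care — we may assume they have equal trace $1/N$. Indeed, if the $E_j$ produced are not of equal trace, one can refine the decomposition: inside each $E_j\M E_j$ (itself a type ${\rm II}_1$ factor) the compression of $X$ is $\tau(X)E_j$, so splitting $E_j$ into finitely many subprojections keeps the defining relation $E\,X\,E=\tau(X)E$ on each piece; taking a common refinement makes all pieces have equal trace, at the cost of enlarging $N$. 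Once the $E_j$ are mutually orthogonal, equivalent, and sum to $I$, choose partial isometries $V_j$ with $V_j^*V_j=E_1$, $V_jV_j^*=E_j$ (take $V_1=E_1$) and set $U=\sum_{j=1}^{N-1}V_{j+1}V_j^*+V_1V_N^*$, the cyclic shift permuting $E_1\to E_2\to\cdots\to E_N\to E_1$. Then $U$ is unitary, $U^N=I$, and conjugation by $U^j$ moves $E_k$ around cyclically. A direct computation gives
\[
\frac{1}{N}\sum_{j=0}^{N-1}{U^*}^{j}XU^{j}=\sum_{k=1}^{N}\Big(\tfrac{1}{N}\sum_{j=0}^{N-1}{U^*}^{j}E_k\,\text{(stuff)}\,E_\ell U^{j}\Big),
\]
and here is the mechanism: averaging over the cyclic group kills all off-diagonal blocks $E_kXE_\ell$ with $k\neq\ell$ (their orbit under the shift sums to a commutator-type expression that telescopes to $0$), while each diagonal block $E_kXE_k=\tau(X)E_k$ is carried onto all the others, producing $\tau(X)\sum_k E_k=\tau(X)I$. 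I would present this computation cleanly by passing to the matrix picture $\M\cong M_N(\mathbb{C})\otimes (E_1\M E_1)$, where $U$ becomes the shift matrix $\otimes$ identity and the averaging is the standard fact that averaging a matrix over cyclic shifts of basis gives a scalar times identity precisely when the diagonal is constant.

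For the implication $(2)\Rightarrow(1)$: given a unitary $U$ with $\frac1N\sum_{j=0}^{N-1}{U^*}^jXU^j=\tau(X)I$, I want to extract the orthogonal projections. The natural move is to consider the abelian von Neumann algebra generated by $U$; however $U$ need not be periodic, so I would instead replace $U$ by a periodic unitary. Let $P(U)=\frac1N\sum_{j=0}^{N-1}{U^*}^jXU^j$. Consider the crossed-product-style trick: work in $M_N(\mathbb C)\otimes\M$ and define $\widehat{X}=\mathrm{diag}(X,\,U^*XU,\,{U^*}^2XU^2,\dots,{U^*}^{N-1}XU^{N-1})$ together with the shift unitary $S\in M_N(\mathbb C)$; one checks $S^*\widehat X S$ differs from $\widehat X$ only by conjugation by $\mathrm{diag}(I,U,U^2,\dots)$, i.e. there is a unitary $W\in M_N(\mathbb C)\otimes\M$ with $W^N=I$ conjugating so that the average of $\widehat X$ over $\langle W\rangle$ equals $(\frac1N\sum {U^*}^jXU^j)\otimes(\text{something})$. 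Cleaner: simply set $E_j$ to be the spectral projections forcing periodicity — take $W$ to be any unitary with $W^N=I$ implementing the same cyclic shift on a set of $N$ equivalent orthogonal projections (which exist in the ${\rm II}_1$ factor), and observe that the hypothesis on $U$ lets one conjugate $X$ into the form where $E_jXE_j=\tau(X)E_j$. Honestly, the slick route is: the condition in (2) says $X$ lies in the range of the averaging projection onto the fixed-point space of $\mathrm{Ad}(U)$-averaging being the scalars; fix $N$ equivalent orthogonal projections $F_1,\dots,F_N$ summing to $I$ with cyclic-shift unitary $W$ ($W^N=I$), and use that $\frac1N\sum_{j}{W^*}^jYW^j=E_{\mathcal D'\cap\M}(Y)$ where $\mathcal D=\{W\}''$; choosing $Y$ a unitary conjugate of $X$ appropriately, $E_{F_j}YE_{F_j}=\tau(Y)F_j$ falls out, and pulling back gives (1).

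The main obstacle I anticipate is the $(2)\Rightarrow(1)$ direction, specifically the fact that the unitary $U$ in (2) is \emph{not} assumed periodic, so one cannot directly diagonalize it to read off the $E_j$; the resolution is the standard $N\times N$ matrix amplification (Rokhlin-type) trick that replaces an arbitrary unitary by the periodic shift $W$ with $W^N=I$ at the cost of tensoring with $M_N(\mathbb C)$, after which the averaging identity becomes a conditional-expectation identity onto a finite-dimensional abelian subalgebra and the projections $E_j$ are simply its minimal projections. The $(1)\Rightarrow(2)$ direction is comparatively routine once one observes that the $E_j$ may be taken of equal trace; the only subtlety there is justifying the refinement to equal traces, which works because the relation $E\,X\,E=\tau(X)E$ is inherited by subprojections of each $E_j$ and the global trace $\tau(X)$ does not change. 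I would model the whole argument on the proof of Proposition 1.5 of \cite{C-W}, adapting the matrix-unit bookkeeping from $B(\mathcal H)$ to the ${\rm II}_1$-factor setting where all nonzero projections of equal trace are equivalent.
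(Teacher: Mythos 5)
Your proof has genuine gaps in both directions. For $(1)\Rightarrow(2)$, the central mechanism is wrong: if $U$ is the cyclic shift permuting equal-trace projections $E_1\to E_2\to\cdots\to E_N\to E_1$, then in the matrix picture the $(k,l)$ block of $\frac1N\sum_{j=0}^{N-1}U^{*j}XU^j$ is the average $\frac1N\sum_j X_{k+j,\,l+j}$ of the blocks along a stripe, which does \emph{not} vanish for $k\neq l$; e.g.\ for $N=2$ and $X=\left(\begin{smallmatrix}0&B\\ B^*&0\end{smallmatrix}\right)$ the shift-average is $\left(\begin{smallmatrix}0&(B+B^*)/2\\ (B+B^*)/2&0\end{smallmatrix}\right)$, not $0$. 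There is no ``telescoping to $0$'' of off-diagonal orbits. The unitary that works (and the one the paper uses) is the \emph{diagonal} one $U=\sum_{k}e^{2\pi ik/N}E_k$: conjugation by $U^j$ multiplies the block $E_kXE_l$ by $e^{2\pi i(l-k)j/N}$, so averaging kills the off-diagonal blocks via $\sum_{j=0}^{N-1}e^{2\pi i(l-k)j/N}=0$ and returns the pinching $\sum_k E_kXE_k=\tau(X)I$. This needs no equivalence of the $E_j$, and your preliminary reduction to equal traces is in general impossible anyway: the traces $\tau(E_j)$ may be mutually incommensurable (say $1/\sqrt2$ and $1-1/\sqrt2$), so no common refinement into pieces of one fixed trace exists, even though (as you correctly note) the relation $FXF=\tau(X)F$ does pass to subprojections $F\le E_j$.

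For $(2)\Rightarrow(1)$ you correctly identify the obstacle (non-periodicity of $U$) but do not resolve it: the amplification to $M_N(\mathbb{C})\otimes\M$ produces projections in the amplified algebra, whereas statement $(1)$ requires projections in $\M$ itself, and the subsequent ``slick route'' (conjugating $X$ so that compressions by an unrelated periodic shift $W$ come out right) is asserted, not proved. The missing idea is the paper's telescoping observation: from $\frac1N\sum_{j=0}^{N-1}U^{*j}XU^j=\tau(X)I$, subtracting the same identity conjugated by $U$ gives $X-U^{*N}XU^N=0$, so $X$ commutes with $U^N$. One then sets $W:=f(U^N)^{-1}U$ with $f$ a Borel $N$-th root on the circle; $W$ is unitary, $W^N=I$, and since $f(U^N)$ lies in the von Neumann algebra generated by $U^N$ it commutes with $X$, so $\frac1N\sum_j W^{*j}XW^j=\frac1N\sum_j U^{*j}XU^j=\tau(X)I$. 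The spectral projections $E_1,\dots,E_N$ of $W$ for the $N$-th roots of unity then compress this identity to $\sum_k E_kXE_k=\tau(X)I$, i.e.\ $E_kXE_k=\tau(X)E_k$. Without this commutation-and-periodization step your argument for $(2)\Rightarrow(1)$ is not a proof.
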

\begin{proof}
(1)$\Rightarrow$(2). Set $A:=\frac{X}{\tau(X)}$, then $\tau(A)=1$. By hypothesis, there are mutually orthogonal nonzero projections $E_1,E_2,...,E_N$ such that $\sum_{k=1}^N E_k=I$ and
$\sum_{k=1}^N E_kAE_k=I$. Set $U:=\sum_{k=1}^N e^{\frac{2\pi ki}{N}}E_k$. Then
\[
U^*=\sum_{k=1}^N e^{-\frac{2\pi ki}{N}}E_k=U^{-1}
\]
and
\[
\begin{aligned}
\frac{1}{N}\sum_{j=0}^{N-1}{U^*}^jAU^j
&= \frac{1}{N} \sum_{j=0}^{N-1}\sum_{k,l=1}^N e^{\frac{2\pi i (l-k)j}{N}}E_k A E_l \\
&=\frac{1}{N} \sum_{k,l=1}^N \left(\sum_{j=0}^{N-1}e^{\frac{2\pi i (l-k)j}{N}}\right)E_k A E_l\\
&= \sum_{k=1}^N E_k A E_k=I
\end{aligned}
\]
since $\sum_{j=0}^{N-1}e^{\frac{2\pi i (l-k)j}{N}}=0$ for $k \neq l$. These show that $U$ is the required operator.

(2)$\Rightarrow$(1). Assume that $U$ is the unitary operator with $\frac{1}{N}\sum_{j=0}^{N-1}{U^*}^jAU^j=I$. Then
\[
A-{U^*}^N A U^N=\sum_{j=0}^{N-1}{U^*}^jAU^j-\sum_{j=1}^N {U^*}^jAU^j=NI-U^*\left(\sum_{j=0}^{N-1}{U^*}^jAU^j\right)U=0.
\]
This implies that $AU^N=U^NA$. Let $f(e^{i\theta})=e^{\frac{i\theta}{N}}$ for $\theta \in [0,2\pi)$ and let $W=f(U^N)^{-1}U$. Then $W$ is a unitary operator. Since $U$ commutes with $U^N$, $W^N=(f(U^N)^{-1}U)^N=f(U^N)^{-N}U^N=U^{-N} U^N=I$. Then the spectrum of $W$ consists of eigenvalues from the set $\{e^{\frac{2k
\pi i}{N}}:1\leq k \leq N\}$. Choose $E_1,E_2,...,E_N \in \M$ to be spectral projections of $W$ such that $\sum_{k=1}^NE_k=I$ and
$W= \sum_{k=1}^N e^{\frac{2k \pi i}{N}} E_k$. As $A$ commutes with $U^N$, we have
\[
\frac{1}{N}\sum_{j=0}^{N-1}{W^*}^j A W^j=\frac{1}{N}\sum_{j=0}^{N-1}{U^*}^jf(U^N)^jAf(U^N)^{-j}U^j
=\frac{1}{N}\sum_{j=0}^{N-1}{U^*}^jAU^j=I.
\]
Since $WE_k=e^{\frac{2k \pi i}{N}}E_k$ for each $1\leq k \leq N$, we have
\[
\begin{aligned}
I &=\frac{1}{N}\sum_{k=1}^N E_k \left( \sum_{j=0}^{N-1}{W^*}^j A W^j\right)E_k
   =\frac{1}{N}\sum_{k=1}^N \left(\sum_{j=0}^{N-1}(e^{-\frac{2jk \pi i}{N}} E_k)A(e^{\frac{2jk \pi i}{N}} E_k)\right)\\
  &=\frac{1}{N}\sum_{k=1}^N\left(\sum_{j=0}^{N-1}E_k A E_k\right)=\sum_{k=1}^N E_k A E_k.
\end{aligned}
\]
This proves (1).
\end{proof}

\begin{lem}\label{lem2.2:TFAE}
Let $A\in \M^+$, $\tau(A)=1$ and $N\in \NNN^*$. Then the following conditions are equivalent.
\begin{enumerate}
\item $A$ is the sum of $N$ nonzero projections;
\item There is a decomposition of the identity into $N$ mutually orthogonal nonzero projections $E_j$, $I=\sum_{j=1}^NE_j$, for which $\sum_{j=1}^N E_jAE_j=I$;
\item There is a finite dimensional abelian von Neumann algebra $\B \subseteq \M$ such that
      \[E_{\B^\prime \cap \M}(A)=I, \]
      where $E_{\B^\prime \cap \M}$ is the conditional expectation from $\M$ onto $\B^\prime \cap \M$ preserving $\tau$;
\item There is a unitary operator $U \in \M$ with
\[\frac{1}{N}\sum_{j=0}^{N-1}{U^*}^jAU^j=I.\]
\end{enumerate}
\end{lem}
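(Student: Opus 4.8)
The plan is to prove Lemma~\ref{lem2.2:TFAE} by establishing a cycle of implications $(1)\Rightarrow(2)\Rightarrow(3)\Rightarrow(4)\Rightarrow(1)$, or more economically by showing $(2)\Leftrightarrow(4)$ via the previous lemma, $(1)\Leftrightarrow(2)$ directly, and $(2)\Leftrightarrow(3)$ by recognizing the conditional expectation. Let me sketch each piece.

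First, $(2)\Leftrightarrow(4)$ is essentially a special case of Lemma~\ref{lem2.1:TFAE}: since $A\in\M^+$ with $\tau(A)=1$, taking $X=A$ there gives $\tau(X)=1$, so the condition $E_jXE_j=\tau(X)E_j$ becomes $E_jAE_j=E_j$ and summing gives $\sum_j E_jAE_j=I$; conversely $\sum_j E_jAE_j=I$ with the $E_j$ mutually orthogonal forces each $E_jAE_j=E_j$ after compressing by $E_k$ (since $E_k(\sum_j E_jAE_j)E_k=E_kAE_k$ and $E_kIE_k=E_k$). So $(2)$ here is literally condition $(1)$ of Lemma~\ref{lem2.1:TFAE} with $X=A$, and $(4)$ here is its condition $(2)$; the equivalence is immediate.

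Next, $(1)\Leftrightarrow(2)$. For $(2)\Rightarrow(1)$: given $I=\sum_{j=1}^N E_j$ with $E_jAE_j=E_j$, I want to produce $N$ projections summing to $A$. The natural move is a "polar-type" or partial-isometry rotation. Following the Choi–Wu idea, from the relation $E_jAE_j=E_j$ one builds projections inside $2\times 2$ corners: for each pair, or more cleanly, one uses the unitary $U=\sum_k e^{2\pi ki/N}E_k$ from Lemma~\ref{lem2.1:TFAE} so that $\frac1N\sum_{j=0}^{N-1}{U^*}^jAU^j=I$, and then observes $A=\sum_{j=0}^{N-1}\frac1N\big(A - ({U^*}^jAU^j - A)\big)$... actually the cleanest classical route: set $P_j=U^jP U^{*j}$ won't directly work, so instead I expect to use the averaging identity together with the fact (Choi–Wu, Proposition 1.5 of \cite{C-W}) that $A$ being such an average of unitary conjugates of a projection-like configuration. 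More concretely, for $(1)\Rightarrow(2)$ the direction is easier and is the one actually needed downstream: if $A=\sum_{i=1}^N P_i$ with $P_i$ nonzero projections, one does \emph{not} immediately get mutually orthogonal $E_j$; here one must invoke the genuine content. So I expect $(1)\Rightarrow(2)$ to be the substantive implication, proved by an explicit construction: given $A=\sum P_i$, consider the $N\times N$ amplification / the operator matrix trick turning the $P_i$ into a single projection in $M_N(\M)$ and pulling back — this is the standard "sum of $N$ projections $\leftrightarrow$ a projection in $M_N(\M)$ with prescribed diagonal compression" correspondence.

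Finally, $(2)\Leftrightarrow(3)$. If $\B$ is the (finite dimensional abelian) von Neumann algebra generated by the mutually orthogonal projections $E_1,\dots,E_N$ summing to $I$, then $\B'\cap\M$ consists of operators commuting with each $E_j$, and the trace-preserving conditional expectation onto it is exactly $E_{\B'\cap\M}(T)=\sum_{j=1}^N E_jTE_j$ (this is a standard fact: the $E_j$ are minimal projections in $\B$, central projections cutting $\M$ into the $E_j\M E_j$, and the expectation onto the commutant of an atomic abelian algebra is compression by the atoms). Hence $E_{\B'\cap\M}(A)=I$ is the same statement as $(2)$. Conversely, given such a $\B$, write $I=\sum E_j$ for its minimal projections and read off $(2)$.

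The main obstacle I anticipate is the implication $(1)\Rightarrow(2)$ — going from an unstructured decomposition $A=\sum_{i=1}^N P_i$ into a decomposition by \emph{mutually orthogonal} projections of the identity with the diagonal-compression property. Handling this requires the operator-matrix correspondence and care that the resulting $E_j$ are nonzero and genuinely orthogonal, and it is here that the hypotheses $\tau(A)=1$ and the exact count $N$ are used (the trace forces $\sum\tau(E_j)=\tau(I)=1$ to be consistent with $\sum\tau(P_i)=\tau(A)=1$, and dimension/rank counting in $M_N(\M)$ must balance). The other three equivalences are formal once the right objects ($U$, $\B$, the compression map) are named.
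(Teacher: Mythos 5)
Your reductions $(2)\Leftrightarrow(4)$ (via Lemma~\ref{lem2.1:TFAE}) and $(2)\Leftrightarrow(3)$ (the expectation onto $\B'\cap\M$ for $\B$ generated by the $E_j$ is $T\mapsto\sum_j E_jTE_j$) are correct and are exactly what the paper does. The gap is that you never actually prove either direction of $(1)\Leftrightarrow(2)$. For $(2)\Rightarrow(1)$ you drift through several candidate strategies (corner rotations, the unitary $U$, Choi--Wu) without landing on any of them, yet this direction is a one-line argument you missed: $E_jAE_j=E_j$ says $(A^{1/2}E_j)^*(A^{1/2}E_j)=E_j$, so $A^{1/2}E_j$ is a partial isometry, hence $P_j:=A^{1/2}E_jA^{1/2}$ is a projection, and $\sum_j P_j=A^{1/2}\bigl(\sum_j E_j\bigr)A^{1/2}=A$.

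For $(1)\Rightarrow(2)$, which you correctly identify as the substantive implication, you only name an ``operator-matrix correspondence'' in $M_N(\M)$ and do not carry it out; as stated it is not a proof, and it is not clear how a projection in $M_N(\M)$ with prescribed diagonal compressions would be pulled back to mutually orthogonal $E_j\in\M$ with $E_jAE_j=E_j$. The paper's argument is concrete and you should supply something equivalent: since $\sum_j\tau(P_j)=\tau(A)=1$, choose mutually orthogonal projections $F_j\sim P_j$ with $\sum_j F_j=I$, pick partial isometries $W_j$ with $W_j^*W_j=P_j$ and $W_jW_j^*=F_j$, and set $B=\sum_j W_j$; orthogonality of the $F_j$ gives $B^*B=\sum_j P_j=A$, while $F_jB=W_j$ gives $\sum_j F_jBB^*F_j=\sum_j W_jW_j^*=I$. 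Writing the polar decomposition $B=VA^{1/2}$ with $V$ unitary (possible because $\M$ is a finite factor) and setting $E_j=V^*F_jV$ yields $\sum_j E_jAE_j=V^*\bigl(\sum_j F_jVAV^*F_j\bigr)V=I$. Note this is precisely where the hypotheses $\tau(A)=1$ and finiteness enter (to fit the $F_j$ orthogonally into $I$ and to make $V$ unitary); your sketch acknowledges that they must enter somewhere but does not exhibit the mechanism.
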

\begin{proof}
By Lemma \ref{lem2.1:TFAE}, we have (2)$\Leftrightarrow$(4).

(2)$\Rightarrow$(1). $\sum_{j=1}^NE_jAE_j=I$ is equivalent to $E_jAE_j=E_j$ for each $1\leq j\leq N$. Let $P_j=A^{1/2}E_jA^{1/2}$ for $1\leq j\leq N$. Then $P_j$ is a projection for $1\leq
j\leq N$ and $A=\sum_{j=1}^NP_j$.

(1)$\Rightarrow$(2). Let $A=\sum_{j=1}^N P_j$, where $\{P_j\}$ are nonzero projections. Then $1=\tau(A)=\sum_{j=1}^N\tau(P_j)$. We decompose the identity $I=\sum_{j=1}^N F_j$ into $N$
mutually orthogonal projections $F_j\sim P_j$.  Now choose partial isometries $W_j\in \M$ with $P_j=W_j^*W_j$ and $F_j=W_jW_j^*$. Denote $B:=\sum_{j=1}^N W_j$, then
\[
B^*B=\left(\sum_{j=1}^N W_j\right)^*\left(\sum_{j=1}^N W_j\right)=\sum_{i,j=1}^NW_i^*W_j=\sum_{j=1}^NW_j^*W_j=\sum_{j=1}^N P_j=A.
\]
Let $B=VA^{1/2}$. Then $BB^*=VAV^*$. Since $\M$ is a ${\rm II}_1$ factor, $V$ can be chosen to be a unitary operator. Moreover, $F_jB=W_j$ for every $j$, thus
\[
\sum_{j=1}^NF_jVAV^*F_j=\sum_{j=1}^NF_jBB^*F_j=\sum_{j=1}^NW_jW_j^*=\sum_{j=1}^NF_j=I.
\]
Let $E_j=V^*F_jV$. Then we have (2).

(2)$\Rightarrow$(3). Take $\B$ to be the algebra generated by $\{E_j\}_{j=1}^N$. Since $E_{\B^\prime \cap \M}(A)=\sum_{j=1}^NE_jAE_j$. Then, by hypothesis, (3) follows.

(3)$\Rightarrow$(2). Let $E_1,E_2,...,E_N \in \B$ be the minimal projections of $\B$ such that $\sum_{j=1}^N E_j=I$. Then $E_{\B^\prime \cap \M}(A)=\sum_{j=1}^N E_jAE_j=I$.

\end{proof}

\begin{rem}\label{(1)(2)}
Note that in the above Lemma, (1)$\Leftrightarrow$(2) is true for $N=+\infty$.
\end{rem}

\begin{lem}\label{lem2.3: one projection average}
Let $E \in \M$ be a projection. Then there is a unitary operator $U\in \M$ and $N \in \NNN^*$ such that
\[\frac{1}{N}\sum_{j=0}^{N-1}{U^*}^jEU^j=\tau(E)I.\]
\end{lem}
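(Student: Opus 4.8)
The plan is to reduce, through the equivalences already available, to a concrete fact about scalar multiples of the identity in a type ${\rm II}_1$ factor. The cases $E=0$ and $E=I$ are trivial (take $U=I$, $N=1$), so assume $0\neq E\neq I$, write $t:=\tau(E)\in(0,1)$, and put $A:=t^{-1}E\in\M^{+}$, so that $\tau(A)=1$. If I can show $A$ is a sum of $N$ nonzero projections for some $N$, then the equivalence ${\rm (1)}\Leftrightarrow{\rm (4)}$ of Lemma~\ref{lem2.2:TFAE} yields a unitary $U\in\M$ with $\frac1N\sum_{j=0}^{N-1}{U^{*}}^{j}AU^{j}=I$, and multiplying by $t$ turns this into the required identity $\frac1N\sum_{j=0}^{N-1}{U^{*}}^{j}EU^{j}=\tau(E)I$. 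So it is enough to prove that $A=t^{-1}E$ is a finite sum of nonzero projections in $\M$.

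Next I would pass to the corner $\N:=E\M E$, which is again a type ${\rm II}_1$ factor with unit $1_{\N}=E$; inside $\N\subseteq\M$ the element $A=t^{-1}E$ is simply $\beta\,1_{\N}$, where $\beta:=t^{-1}>1$, and a nonzero projection of $\N$ is a nonzero projection of $\M$. Hence everything comes down to the claim that \emph{in any type ${\rm II}_1$ factor $\N$ the operator $\beta\,1_{\N}$ is a finite sum of nonzero projections for every real $\beta>1$}. I would first peel off copies of the unit: with $m:=\max(\lceil\beta\rceil-2,0)\in\NNN$ one has $\beta-m\in(1,2]$, and $\beta\,1_{\N}$ equals the sum of $m$ copies of the projection $1_{\N}$ together with $(\beta-m)1_{\N}$, so it suffices to treat $\beta\in(1,2]$. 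For $\beta=2$ we have $2\cdot 1_{\N}=1_{\N}+1_{\N}$. For $\beta\in(1,2)$, choose a projection $P\in\N$ with $0<\tau_{\N}(P)<\beta-1$ (possible since $\N$ is type ${\rm II}_1$ and $0<\beta-1<1$), and set $e:=1_{\N}-P$, $f:=P$, $\mu:=\beta-1>0$, $\lambda:=2-\beta\in(0,1)$. Then
\[
(1+\mu)e+(1-\lambda)f=\beta(1_{\N}-P)+(\beta-1)P=\beta\,1_{\N}-P,
\]
and the hypothesis $\mu\,\tau_{\N}(e)>\lambda\,\tau_{\N}(f)$ of Lemma~\ref{lem1: lemma 6.1} unwinds to exactly $\tau_{\N}(P)<\beta-1$, which holds by the choice of $P$. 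So Lemma~\ref{lem1: lemma 6.1}, applied within $\N$, shows $\beta\,1_{\N}-P$ is a finite sum of projections, whence $\beta\,1_{\N}=P+(\beta\,1_{\N}-P)$ is one as well (discarding any zero summands). Tracing this back through the first paragraph would complete the proof.

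I expect the only step with genuine content to be the claim about $\beta\,1_{\N}$, and within it the one trick that makes it run: removing a projection $P$ of sufficiently small trace converts the remainder $\beta\,1_{\N}-P$ into the two-block form $(1+\mu)e+(1-\lambda)f$ to which Lemma~\ref{lem1: lemma 6.1} applies, and the trace condition there collapses to the freely arrangeable inequality $\tau_{\N}(P)<\beta-1$. Everything else is routine bookkeeping with the equivalences in Lemma~\ref{lem2.2:TFAE}.
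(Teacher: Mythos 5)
Your proof is correct and follows essentially the paper's route: reduce to $A=E/\tau(E)$ with $\tau(A)=1$, show $A$ is a finite sum of nonzero projections, and then get the unitary from the equivalence (1)$\Leftrightarrow$(4) of Lemma~\ref{lem2.2:TFAE}. The only difference is in how Lemma~\ref{lem1: lemma 6.1} is invoked: the paper applies it directly to $(1+\mu)E$ (in effect with the second projection zero), whereas you pass to the corner $E\M E$ and split off a projection of small trace so that the lemma is used with both projections nonzero and a strict trace inequality --- a slightly longer but equally valid, and arguably more careful, application of the same key lemma.
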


\begin{proof}
The case $E=0,I$ being trivial, we assume that $0<E<I$. Then $0<\tau(E)<1$ and $\frac{1}{\tau(E)}>1$. Set $A:=\frac{1}{\tau(E)}E$. By Lemma \ref{lem1: lemma 6.1}, $A$ is a sum of $N\in \NNN^*$
nonzero
projections in $\M$. Since $\tau(A)=1$, then by Lemma \ref{lem2.2:TFAE}, there is a unitary operator $U\in\M$ such that
\[\frac{1}{N}\sum_{j=0}^{N-1}{U^*}^jEU^j=\tau(E)I.
\]
\end{proof}

\begin{lem}\label{lem2.4: two orth proj tau A= tau RA finite sums}
Let $E$ and $F$ be mutually orthogonal projections in $(\M,\tau)$, $\mu \geq 0$, $0\leq \lambda \leq 1$, and $A:=(1+\mu)E + (1-\lambda)F$. If $\mu \tau(E)\geq \lambda \tau(F)$, then $A$ is a finite sum of projections in $\M$.
\end{lem}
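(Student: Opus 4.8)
The plan is to reduce to the already-established case where the inequality is strict (Lemma~\ref{lem1: lemma 6.1}) by splitting off a small piece of the projection $E$ and distributing the excess. If $\mu\tau(E)>\lambda\tau(F)$ we are done immediately by Lemma~\ref{lem1: lemma 6.1}, so assume $\mu\tau(E)=\lambda\tau(F)$; the degenerate cases $\mu=0$ (then $\lambda\tau(F)=0$, so either $\lambda=0$ and $A=E+F$ is already a single projection, or $F=0$ and $A=E$) and $\lambda=0$ (then $\mu\tau(E)=0$, so $\mu=0$ or $E=0$, both trivial) and $\lambda=1$ (then $A=(1+\mu)E$ with $\mu\tau(E)=\tau(F)$, handled below as well) may be disposed of separately, so suppose $\mu>0$ and $0<\lambda<1$.

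The key step: choose a subprojection $E_0\le E$ with $0<\tau(E_0)$ small, and split $E=E_0+E_1$ with $E_1=E-E_0$. Write
\[
A=(1+\mu)E_1+(1-\lambda)F+(1+\mu)E_0.
\]
The idea is to move $E_0$ together with a matching piece of $F$ into a block that is a finite sum of projections by a direct elementary argument, leaving $(1+\mu)E_1+(1-\lambda')F'$ on the complementary piece with $\mu\tau(E_1)>\lambda'\tau(F')$ so that Lemma~\ref{lem1: lemma 6.1} applies there. Concretely, since $\mu\tau(E)=\lambda\tau(F)$ and $0<\lambda<1$, we have $\tau(F)=\frac{\mu}{\lambda}\tau(E)>\mu\tau(E)$; pick $F_0\le F$ with $\tau(F_0)=\mu\tau(E_0)/1=\mu\tau(E_0)$ adjusted so that $(1+\mu)E_0+(1-\lambda)(\text{a piece of }F)$ has the right trace to be, say, a sum of projections via Lemma~\ref{lem1: lemma 3.2} after rescaling, or more directly so that on the reduced algebra $(E_0+F_0)\M(E_0+F_0)$ the operator $(1+\mu)E_0+(1-\lambda)F_0$ is locally invertible (its spectrum is $\{1+\mu,1-\lambda\}$, both positive since $\lambda<1$) and hence a positive combination of projections by Corollary~\ref{cor1: Corollary 2.5}; but a positive \emph{combination} is not yet a finite \emph{sum}, so instead I would arrange, by choosing $\tau(E_0)$ and $\tau(F_0)$ with $\mu\tau(E_0)=\lambda\tau(F_0)$ replaced by the strict inequality $\mu\tau(E_0)>\lambda'\tau(F_0)$ for a slightly smaller $\lambda'<\lambda\le 1$, that Lemma~\ref{lem1: lemma 6.1} itself applies to $(1+\mu)E_0+(1-\lambda')F_0$. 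Then on the complement we have $(1+\mu)E_1+(1-\lambda)(F-F_0)$ with the defect slightly reduced, giving $\mu\tau(E_1)=\mu\tau(E)-\mu\tau(E_0)$ compared against $\lambda\tau(F-F_0)=\lambda\tau(F)-\lambda\tau(F_0)=\mu\tau(E)-\lambda\tau(F_0)$; since $\lambda\tau(F_0)<\mu\tau(E_0)$ we get $\mu\tau(E_1)>\lambda\tau(F-F_0)$, strict, so Lemma~\ref{lem1: lemma 6.1} applies. Adding the two finite sums gives the result.

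I expect the main obstacle to be the bookkeeping at the boundary value $\lambda=1$, where $(1-\lambda)F=0$ and the argument above trivializes incorrectly: there $A=(1+\mu)E$ with $\mu\tau(E)=\tau(F)$, and one must instead observe that $(1+\mu)E$ with $\tau((1+\mu)E)=\tau(E)+\mu\tau(E)=\tau(E)+\tau(F)=\tau(E+F)\le 1$ is a finite sum of projections directly, e.g.\ by splitting $(1+\mu)E=E+\mu E$ and writing $\mu E$ (which has $\tau(\mu E)=\tau(F)\le 1-\tau(E)$, fitting in the complement of $E$) as a finite sum of projections living under a projection orthogonal to $E$ via Lemma~\ref{lem1: lemma 2.8} or a rescaled application of Lemma~\ref{lem1: lemma 6.1}. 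A secondary technical point is ensuring all the small projections $E_0, F_0$ can actually be chosen with the prescribed traces, which is immediate in a type ${\rm II}_1$ factor since the trace takes all values in $[0,1]$ on projections and $E$, $F$ are nonzero in the nontrivial cases. Once these edge cases are isolated, the core is a clean two-block decomposition each piece of which is handled by Lemma~\ref{lem1: lemma 6.1}.
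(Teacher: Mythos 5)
Your strict-inequality case and the degenerate cases are fine (and coincide with the paper), but the heart of the lemma is the equality case $\mu\tau(E)=\lambda\tau(F)$, and there your argument has a genuine gap. The two blocks you propose, $(1+\mu)E_0+(1-\lambda')F_0$ and $(1+\mu)E_1+(1-\lambda)(F-F_0)$, do not sum to $A$: their sum is $A+(\lambda-\lambda')F_0$, so you have (at best) shown that a strictly larger operator is a finite sum of projections. You cannot lower the coefficient on $F_0$ from $1-\lambda$ to $1-\lambda'$, because that changes the operator being decomposed. Moreover, the obstruction is structural and cannot be repaired by cleverer bookkeeping within your scheme: if you split $E=E_0+E_1$, $F=F_0+F_1$ and keep the true coefficients, then $\mu\tau(E_0)+\mu\tau(E_1)=\lambda\tau(F_0)+\lambda\tau(F_1)$, so whenever one block satisfies the strict inequality required by Lemma~\ref{lem1: lemma 6.1}, the complementary block violates it (or both are again equalities). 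Hence the equality case can never be reduced to Lemma~\ref{lem1: lemma 6.1} alone by orthogonal block splitting; a genuinely new ingredient is needed. Your fallback remarks do not supply it: Corollary~\ref{cor1: Corollary 2.5} and Lemma~\ref{lem1: lemma 2.8} only give positive \emph{combinations}, not sums (as you note), and in the $\lambda=1$ aside the operator $\mu E$ with $0<\mu<1$ is not a finite sum of projections at all (any projection $P\le \mu E$ with $\mu<1$ is zero), nor can projections summing to $\mu E$ live under a projection orthogonal to $E$.

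For comparison, the paper closes the equality case by a different mechanism: reduce to $E+F=I$, rewrite $A=(1-\lambda)I+(\mu+\lambda)E$ and note $\tau(A)=1$; then the unitary $U$ of Lemma~\ref{lem2.3: one projection average} satisfying $\frac{1}{N}\sum_{j=0}^{N-1}{U^*}^jEU^j=\tau(E)I$ automatically gives $\frac{1}{N}\sum_{j=0}^{N-1}{U^*}^jAU^j=I$, and the equivalence $(4)\Rightarrow(1)$ of Lemma~\ref{lem2.2:TFAE} converts this averaging statement into a decomposition of $A$ as a sum of $N$ projections. Some device of this kind (averaging/unitary orbits, or an equivalent) is exactly what your proposal is missing for the boundary case.
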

\begin{proof}
By considering $(E+F)\M(E+F)$, we may assume that $E+F=I$. By Lemma \ref{lem1: lemma 6.1}, if $\mu \tau(E) > \lambda \tau(F)$, then $A$ is a finite sum of projections in $\M$. So we may
assume that $\mu \tau(E)= \lambda \tau(F)$. Notice that $A=(1+\mu)E + (1-\lambda)F=(1+\mu)E + (1-\lambda)(I-E)$ and $\tau(A)=1+\mu\tau(E)-\lambda\tau(I-E)=1$. By Lemma
\ref{lem2.3: one projection average}, there is a unitary operator $U$ and $N \in \NNN^*$ such that $\frac{1}{N}\sum_{j=0}^{N-1}{U^*}^jEU^j=\tau(E)I$. Thus
\[
\frac{1}{N}\sum_{j=0}^{N-1}{U^*}^jAU^j=\frac{1}{N}\sum_{j=0}^{N-1}{U^*}^j(I-\lambda I+(\mu+\lambda)E)U^j
=I-\lambda I+(\mu+\lambda)\tau(E)I=I.
\]
So by Lemma \ref{lem2.2:TFAE}, $A$ is a sum of $N$ nonzero projections in $\M$.
\end{proof}

The following lemma is inspired by~\cite{Had}.

\begin{lem}\label{lem2.5: twe proj construction}
Let $E,F$ be two nonzero projections in $(\M,\tau)$. Then there are projections $E_1,E_2,F_1,F_2$ in $\M$ satisfying
\begin{enumerate}
\item $E_1+E_2=E$, $F_1+F_2=F$;
\item $\tau(E_1)=\tau(E_2)=\frac{1}{2}\tau(E)$, $\tau(F_1)=\tau(F_2)=\frac{1}{2}\tau(F)$;
\item $E_1\bot F_1$, $E_2 \bot F_2$.
\end{enumerate}
\end{lem}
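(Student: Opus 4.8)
The goal is to split two projections $E$ and $F$ each into two halves of equal trace so that one half of $E$ is orthogonal to one half of $F$ and the remaining halves are orthogonal to each other. The natural arena is the von Neumann algebra $\{E,F\}''$ generated by two projections, which by the standard structure theory decomposes into a direct sum of an abelian part (where $E$ and $F$ commute, i.e.\ the five ``corners'' $E\wedge F$, $E\wedge F^\perp$, $E^\perp\wedge F$, $E^\perp \wedge F^\perp$, and the complement of their sum split off) and a ``generic'' part on which $\{E,F\}''$ is isomorphic to $L^\infty([0,1],M_2(\CCC))$ with $E$ and $F$ corresponding to fixed rank-one projection-valued functions. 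The plan is to treat the commuting part by a direct halving argument and the $M_2$-valued part by an explicit $2\times 2$ construction, then reassemble.

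\textbf{Key steps.} First I would invoke the type ${\rm II}_1$ hypothesis: since $\M$ is a ${\rm II}_1$ factor, its projection lattice is ``divisible'' --- any projection $p$ has, for every $t\in[0,\tau(p)]$, a subprojection of trace $t$, and moreover we can find such subprojections inside any non-atomic abelian subalgebra or inside any corner of $\M$. Second, for the part of $E$ that is orthogonal to $F$ already (namely $E\wedge F^\perp$ together with a suitable piece), and symmetrically for $F$, the halving is immediate: just cut each such projection into two pieces of equal trace. Third, the real content is the part where $E$ and $F$ ``overlap.'' On the generic summand, pick a matrix unit system $\{e_{ij}\}$ realizing $\{E,F\}''$ (on that summand) inside $M_2$ over an abelian algebra; writing $E = e_{11}$ there and $F$ as the rank-one projection onto a vector making angle $\theta(t)$ with the first basis vector, one halves by choosing a measurable subset $S$ of the underlying probability space of measure $1/2$ (relative to the trace on that summand), setting $E_1 = \chi_S E$, $E_2 = \chi_{S^c}E$, and dually $F_1 = \chi_{S^c} F$, $F_2 = \chi_S F$. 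Then $E_1 \perp F_1$ because they live over disjoint pieces $S$ and $S^c$ of the base, and likewise $E_2 \perp F_2$; the trace conditions hold because $\tau$ restricted to this summand is (a multiple of) the product of the base measure with the normalized matrix trace, so $\tau(\chi_S E) = \tfrac12 \tau(E)$ exactly when the base measure of $S$ is $\tfrac12$. Finally I would combine: add the contributions of $E_1$ from the commuting part and from the generic part to get the global $E_1$, and similarly for the other three, checking that orthogonality and the trace splitting are preserved under the direct sum.

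\textbf{Alternative, cleaner route.} Rather than invoking the full two-projection structure theorem, one can argue more softly. Since $\M$ is a type ${\rm II}_1$ factor, the abelian von Neumann subalgebra $\{E,F\}''\cap\{E,F\}'' $ — more usefully, fix a maximal abelian subalgebra $\A$ of $\{E,F\}''$; it is generated by $E\wedge F$, $E\wedge F^\perp$, $E^\perp\wedge F$, $E^\perp\wedge F^\perp$ and the ``spectral'' abelian algebra of $EFE$ restricted to the generic part. This $\A$ contains a projection $Q$ with the property that $Q$ commutes with both $E$ and $F$ and $\tau(QE) = \tfrac12\tau(E)$, $\tau(QF) = \tfrac12 \tau(F)$ — one produces $Q$ by solving the two linear trace constraints on the (non-atomic) abelian algebra $\A$, which is possible because $\A$ is non-atomic on the generic summand and one has enough freedom. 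Then set $E_1 = QE$, $E_2 = (I-Q)E$, $F_1 = (I-Q)F$, $F_2 = QF$: conditions (1) and (2) are immediate, and for (3), $E_1 F_1 = QE(I-Q)F = EQ(I-Q)F = 0$ since $Q$ commutes with $E$, and symmetrically $E_2 F_2 = (I-Q)EQF = 0$.

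\textbf{Main obstacle.} The delicate point is producing the commuting projection $Q$ (equivalently, choosing the measurable set $S$) that \emph{simultaneously} halves the trace of $E$ and the trace of $F$. On the purely atomic/commuting corners this can fail to be solvable with a single cut (e.g.\ if $E\wedge F$ is the only overlap), so one genuinely needs to exploit that the overlap of $E$ and $F$ in a ${\rm II}_1$ factor always has a non-atomic abelian algebra available — and one must be careful to combine the free ``continuous'' choices on the generic summand with the forced choices on the five discrete corners so that the two global trace equations $\tau(QE)=\tfrac12\tau(E)$ and $\tau(QF)=\tfrac12\tau(F)$ both hold. I expect the bulk of the write-up to be the bookkeeping that decomposes $E$ and $F$ along these pieces, handles the corner $E\wedge F$ by cutting it in half (which contributes equally to both constraints), cuts $E\wedge F^\perp$ and $E^\perp\wedge F$ in half independently, and then uses the non-atomic part to absorb any remaining discrepancy — with the type ${\rm II}_1$ assumption guaranteeing that non-atomic part (or that all relevant subprojections of the needed traces exist) whenever the discrete corners alone would not suffice.
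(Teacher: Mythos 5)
Your primary route is essentially the paper's own proof: the paper invokes the relative-position (two-projection) structure theorem, halves the commuting corners $E\wedge F$, $E\wedge F^{\perp}$, $E^{\perp}\wedge F$ separately, and on the generic $2\times 2$ part chooses a relative-trace-$\tfrac12$ projection $e$ commuting with $h,k$, then defines $E_1,E_2,F_1,F_2$ with exactly your ``swap'' ($E_1$ built from $e$ and one half of $E\wedge F$, $F_1$ from $I-e$ and the other half), so your plan, traces, and orthogonality bookkeeping match the paper's. The one precision to keep when writing it up is that the halving projection must commute with both $E$ and $F$ and in general cannot be found inside $\{E,F\}''$ or a masa of it, nor is the generic part automatically of the form $L^{\infty}[0,1]\otimes M_2(\CCC)$ (two projections at a constant angle give just $M_2(\CCC)$ there); as in the paper, one uses that the relevant corner of the ${\rm II}_1$ factor $\M$ is diffuse to embed $h,k$ in a diffuse abelian subalgebra $\A\subseteq\M$ and takes $e\in\A$ of trace $\tfrac12$, which is the correct version of the ``non-atomic abelian algebra available'' step you flag as the main obstacle.
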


\begin{proof}
By the relative position of two projections in a type $\rm{II}_1$ factor (see page 306-308 of~\cite{Tak}), we can write
\[E=
\begin{pmatrix}
P & \\
  & Q \\
  &  &0 \\
  &  & & \begin{pmatrix}
  I &0 \\
  0 &0
  \end{pmatrix}
\end{pmatrix}, \quad
F=
\begin{pmatrix}
P & \\
  & 0 \\
  &  & R \\
  &  & & \begin{pmatrix}
  h^2 & hk \\
  hk & k^2
  \end{pmatrix}
\end{pmatrix},
\]
where $P,Q,R$ are projections, $h \geq 0$, $k\geq 0$ and $h^2+k^2=I$. $h,k$ are contained in a diffuse abelian von Neumann subalgebra $\A$. Hence there is a projection $e\in \A$ with
$\tau_{I}(e)=\frac{1}{2}$. Let $P_1,P_2,Q_1,Q_2,R_1,R_2$ be projections satisfying
\begin{enumerate}
\item $P_1+P_2=P$, $Q_1+Q_2=Q$, $R_1+R_2=R$;
\item $\tau(P_1)=\tau(P_2)=\frac{1}{2}\tau(P)$, $\tau(Q_1)=\tau(Q_2)=\frac{1}{2}\tau(Q)$, $\tau(R_1)=\tau(R_2)=\frac{1}{2}\tau(R)$.
\end{enumerate}
Set
\[E_1=
\begin{pmatrix}
P_1 & \\
  & Q_1 \\
  &  &0 \\
  &  & & \begin{pmatrix}
  e &0 \\
  0 &0
  \end{pmatrix}
\end{pmatrix}, \quad
E_2=
\begin{pmatrix}
P_2 & \\
  & Q_2 \\
  &  &0 \\
  &  & & \begin{pmatrix}
  I-e &0 \\
  0 &0
  \end{pmatrix}
\end{pmatrix},
\]

\[F_1=
\begin{pmatrix}
P_2 & \\
  & 0 \\
  &  &R_1 \\
  &  & & \begin{pmatrix}
  (I-e)h^2 & (I-e)hk \\
  (I-e)hk & (I-e)k^2
  \end{pmatrix}
\end{pmatrix}, \quad
F_2=
\begin{pmatrix}
P_1 & \\
  & 0 \\
  &  &R_2 \\
  &  & & \begin{pmatrix}
  eh^2 & ehk \\
  ehk & ek^2
  \end{pmatrix}
\end{pmatrix}.
\]
Then $E_1,E_2,F_1,F_2$ satisfy the lemma.
\end{proof}

\begin{lem}\label{lem2.6: two proj non orth finite sum}
Let $E,F$ be two nonzero projections in $(\M,\tau)$ and let $X=(1+\mu)E+(1-\lambda)F$, where $\mu \geq 0$ and $0\leq \lambda \leq 1$. If $\mu \tau(E)\geq \lambda
\tau(F)$, then $X$ is a finite sum of projections in $\M$.

\end{lem}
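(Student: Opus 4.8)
The plan is to reduce the non-orthogonal case to the orthogonal case handled in Lemma~\ref{lem2.4: two orth proj tau A= tau RA finite sums}, using the splitting furnished by Lemma~\ref{lem2.5: twe proj construction}. First I would apply Lemma~\ref{lem2.5: twe proj construction} to the pair $E,F$ to obtain projections $E_1,E_2,F_1,F_2$ with $E_1+E_2=E$, $F_1+F_2=F$, $\tau(E_i)=\tfrac12\tau(E)$, $\tau(F_i)=\tfrac12\tau(F)$, and the crucial orthogonality relations $E_1\perp F_1$ and $E_2\perp F_2$. Then
\[
X=(1+\mu)E+(1-\lambda)F=\bigl[(1+\mu)E_1+(1-\lambda)F_1\bigr]+\bigl[(1+\mu)E_2+(1-\lambda)F_2\bigr]=:X_1+X_2,
\]
where $X_i=(1+\mu)E_i+(1-\lambda)F_i$ involves the \emph{orthogonal} pair $E_i,F_i$.

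Next I would check the trace hypothesis of Lemma~\ref{lem2.4: two orth proj tau A= tau RA finite sums} for each $X_i$. Since $\tau(E_i)=\tfrac12\tau(E)$ and $\tau(F_i)=\tfrac12\tau(F)$, the assumption $\mu\tau(E)\geq\lambda\tau(F)$ gives immediately $\mu\tau(E_i)\geq\lambda\tau(F_i)$ for $i=1,2$. Hence by Lemma~\ref{lem2.4: two orth proj tau A= tau RA finite sums}, each $X_i$ is a finite sum of projections in $\M$. A sum of two finite sums of projections is again a finite sum of projections, so $X=X_1+X_2$ is a finite sum of projections, which is the assertion. One should be slightly careful with the degenerate cases: if $\mu=0$ (so $X=E+(1-\lambda)F$ with $\lambda\tau(F)\le 0$, forcing $\lambda=0$ or $F=0$) and if some $E_i$ or $F_i$ vanishes; but Lemma~\ref{lem2.4: two orth proj tau A= tau RA finite sums} is stated to allow $\mu\ge 0$ and $0\le\lambda\le1$ and a zero projection contributes nothing, so these cases are covered — when one of the four pieces is $0$ the corresponding $X_i$ is either a single projection or $(1-\lambda)F_i$, which is still handled (either trivially, or because then necessarily $\lambda\tau(F_i)\le\mu\tau(E_i)=0$ so the coefficient issue is benign).

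I do not expect a serious obstacle here: the whole content is in the decomposition lemma, Lemma~\ref{lem2.5: twe proj construction}, whose orthogonality clauses $E_1\perp F_1$, $E_2\perp F_2$ are exactly what is needed to peel the non-orthogonality away while keeping the trace ratio intact (the halving is uniform, so $\mu\tau(E)\ge\lambda\tau(F)$ is inherited by each half). The only point requiring a line of care is the bookkeeping of edge cases just mentioned, and the trivial observation that the class of finite sums of projections is closed under addition of two elements. Everything else is a direct substitution into Lemma~\ref{lem2.4: two orth proj tau A= tau RA finite sums}.
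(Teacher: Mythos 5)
Your proof is correct and follows essentially the same route as the paper: split $E,F$ via Lemma~\ref{lem2.5: twe proj construction} into halves with $E_i\perp F_i$, note the halved trace inequality $\mu\tau(E_i)\ge\lambda\tau(F_i)$, and apply Lemma~\ref{lem2.4: two orth proj tau A= tau RA finite sums} to each summand. The edge-case worries are harmless but unnecessary, since $E,F\neq 0$ forces all four pieces to be nonzero and Lemma~\ref{lem2.4: two orth proj tau A= tau RA finite sums} already covers $\mu\ge 0$, $0\le\lambda\le 1$ (the paper simply dismisses these cases as trivial).
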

\begin{proof}
To avoid triviality, assume that $\mu > 0$ and $0 < \lambda < 1$. By Lemma \ref{lem2.5: twe proj construction}, there are projections $E_1,E_2,F_1,F_2 \in \M$ satisfying
\begin{enumerate}
\item $E_1+E_2=E$, $F_1+F_2=F$;
\item $\tau(E_1)=\tau(E_2)=\frac{1}{2}\tau(E)$, $\tau(F_1)=\tau(F_2)=\frac{1}{2}\tau(F)$;
\item $E_1\bot F_1$, $E_2 \bot F_2$.
\end{enumerate}
Set $X_1:=(1+\mu)E_1+(1-\lambda)F_1$ and $X_2:=(1+\mu)E_2+(1-\lambda)F_2$. By the hypothesis,
\[
\mu\tau(E_1)=\frac{1}{2}\mu\tau(E)\geq\frac{1}{2}\lambda\tau(F)=\lambda\tau(F_1);
\]
\[
\mu\tau(E_2)=\frac{1}{2}\mu\tau(E)\geq\frac{1}{2}\lambda\tau(F)=\lambda\tau(F_2).
\]
So by Lemma \ref{lem2.4: two orth proj tau A= tau RA finite sums}, $X_1$ and $X_2$ are finite sums of projections in $\M$, respectively, and $X=X_1+X_2$ is a finite sum of projections in
$\M$.
\end{proof}

\begin{lem}\label{lem2.7: finite non orth proj finite sum}
Let $E_i,F_j$, $1 \leq i \leq N, 1\leq j \leq M$ be nonzero projections in $(\M,\tau)$ and let $X=\sum_{i=1}^N(1+\mu_i)E_i+\sum_{j=1}^M(1-\lambda_j)F_j$, where
$\mu_i \geq 0$ and $0\leq \lambda_j \leq 1$. If $\sum_{i=1}^N \mu_i \tau(E_i)\geq \sum_{j=1}^M\lambda_j \tau(F_j)$, then $X$ is a finite sum of projections in $\M$.

\end{lem}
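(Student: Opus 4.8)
\textbf{Proof plan for Lemma \ref{lem2.7: finite non orth proj finite sum}.}
The plan is to reduce the general case to the two-projection case handled in Lemma \ref{lem2.6: two proj non orth finite sum} by an inductive pairing argument. The obvious induction on $N+M$ is complicated by the fact that $X$ is a sum of projections whose ranges may overlap in an intricate way, so first I would not attempt to manipulate $X$ directly. Instead, I would split both the ``excess'' part $\sum_i \mu_i\tau(E_i)$ and the ``defect'' part $\sum_j \lambda_j\tau(F_j)$ into small pieces and match them up. Concretely, using the fact that in a type ${\rm II}_1$ factor every projection can be cut into subprojections of prescribed trace (as already exploited in Lemma \ref{lem2.5: twe proj construction}), I would replace each $E_i$ by a finite family of subprojections $E_{i,s}$ with $E_i=\sum_s E_{i,s}$, and likewise each $F_j$ by $F_j=\sum_t F_{j,t}$, chosen fine enough that the collection of ``weighted traces'' $\mu_i\tau(E_{i,s})$ and $\lambda_j\tau(F_{j,t})$ can be grouped into blocks, where each block pairs exactly one excess piece $(1+\mu_i)E_{i,s}$ with one defect piece $(1-\lambda_j)F_{j,t}$ satisfying $\mu_i\tau(E_{i,s})\ge \lambda_j\tau(F_{j,t})$, together with possibly some leftover pure-excess pieces $(1+\mu_i)E_{i,s}$ (which are sums of projections trivially since $1+\mu_i\ge 1$, using Lemma \ref{lem2.6: two proj non orth finite sum} with an auxiliary small projection, or more simply Corollary \ref{cor1: Corollary 2.5}). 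Then $X=\sum(1+\mu_i)E_{i,s}+\sum(1-\lambda_j)F_{j,t}$ is a finite sum over these blocks, and each block is a finite sum of projections by Lemma \ref{lem2.6: two proj non orth finite sum}; summing finitely many finite sums of projections gives a finite sum of projections.

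To make the matching precise I would argue as follows. Set $a=\sum_{i=1}^N\mu_i\tau(E_i)$ and $b=\sum_{j=1}^M\lambda_j\tau(F_j)$, so $a\ge b$. I would like to produce a finite index set and weights $w_{i,j}\ge 0$ with $\sum_j w_{i,j}\le \mu_i\tau(E_i)$ for each $i$, $\sum_i w_{i,j}=\lambda_j\tau(F_j)$ for each $j$, and then, for each pair $(i,j)$ with $w_{i,j}>0$, cut a subprojection $F_{j}^{(i)}\le F_j$ with $\lambda_j\tau(F_j^{(i)})=w_{i,j}$ (these $F_j^{(i)}$ partitioning $F_j$ over $i$), and a subprojection $E_i^{(j)}\le E_i$ with $\mu_i\tau(E_i^{(j)})=w_{i,j}$; the remaining part $E_i^{(0)}$ of each $E_i$ carries weight $\mu_i\tau(E_i^{(0)})=\mu_i\tau(E_i)-\sum_j w_{i,j}\ge 0$. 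Existence of such weights $w_{i,j}$ is an elementary transportation/flow fact (total supply $a$ at least total demand $b$). With the cuts in hand, $X=\sum_{i,j:\,w_{i,j}>0}\bigl((1+\mu_i)E_i^{(j)}+(1-\lambda_j)F_j^{(i)}\bigr)+\sum_i (1+\mu_i)E_i^{(0)}$, where each summand of the first sum satisfies the hypothesis $\mu_i\tau(E_i^{(j)})=w_{i,j}\ge w_{i,j}=\lambda_j\tau(F_j^{(i)})$ of Lemma \ref{lem2.6: two proj non orth finite sum} and each summand of the second is locally invertible (indeed $\ge R_{E_i^{(0)}}$) hence a positive combination, in fact a finite sum, of projections by Corollary \ref{cor1: Corollary 2.5} combined with an application of Lemma \ref{lem2.6: two proj non orth finite sum} to $(1+\mu_i)E_i^{(0)}$ itself (taking $\lambda=0$); either way it is a finite sum of projections.

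The main obstacle is the bookkeeping in the matching step: one must be careful that the cuts $E_i^{(j)}$ of a single $E_i$ over the various $j$ are mutually orthogonal and exhaust $E_i$ up to the leftover $E_i^{(0)}$, and similarly for the $F_j^{(i)}$, so that reassembling the pieces really reconstructs $X$ and not merely something unitarily equivalent. Since all constraints are on traces only and a type ${\rm II}_1$ factor is diffuse, orthogonal cuts of any prescribed traces summing to $\tau(E_i)$ exist, so this is purely combinatorial and causes no difficulty beyond notation. A minor point is handling the degenerate cases $\mu_i=0$ (then $E_i$ is itself a projection and can be absorbed as a trivial summand) and $\lambda_j\in\{0,1\}$, which are all covered by the conventions already adopted in Lemmas \ref{lem2.4: two orth proj tau A= tau RA finite sums} and \ref{lem2.6: two proj non orth finite sum}. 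Thus the proof is essentially: fine enough cutting plus the transportation lemma reduces everything to finitely many instances of Lemma \ref{lem2.6: two proj non orth finite sum}.
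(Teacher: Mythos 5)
Your plan is essentially the paper's own argument in different clothing: the paper proves this lemma by a greedy induction that at each step compares $\mu_1\tau(E_1)$ with $\lambda_1\tau(F_1)$, cuts a subprojection of whichever term has the larger weighted trace so that the two match, peels off that trace-matched pair using Lemma \ref{lem2.6: two proj non orth finite sum}, and passes the leftover subprojection back into the list with the inequality preserved; your transportation plan with weights $w_{i,j}$ performs exactly the same trace-matched cuts, only all at once rather than one pair per induction step, so the difference is purely organizational. The one step in your write-up that does not work as stated is the disposal of the leftover pure-excess pieces $(1+\mu_i)E_i^{(0)}$: Corollary \ref{cor1: Corollary 2.5} yields only a \emph{positive combination} of projections (arbitrary positive coefficients), not a finite sum, and applying Lemma \ref{lem2.6: two proj non orth finite sum} with $\lambda=0$ produces $(1+\mu_i)E_i^{(0)}+F$ for some extra nonzero projection $F$, which is not the operator you need to decompose. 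The fix is one line: take $\lambda=1$ instead, writing $(1+\mu_i)E_i^{(0)}=(1+\mu_i)E_i^{(0)}+(1-1)F$ for any nonzero projection $F$ with $\tau(F)\le\mu_i\tau(E_i^{(0)})$ (orthogonality is not required in Lemma \ref{lem2.6: two proj non orth finite sum}), or equivalently invoke Lemma \ref{lem1: lemma 6.1} with $\lambda=1$ and a small $f$; together with the trivial case $\mu_i=0$ this shows every leftover piece is a finite sum of projections, and your argument is then complete. It is worth noting that the paper's induction faces the same terminal situation (the defect terms can be exhausted while excess terms remain) and resolves it by the same observation, so with this correction the two proofs coincide in substance.
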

\begin{proof}
(1) If $\mu_1\tau(E_1)=\lambda_1\tau(F_1)$, then by Lemma \ref{lem2.6: two proj non orth finite sum}, $(1+\mu_1)E_1+(1-\lambda_1)F_1$ is a finite sum of projections and $\sum_{i=2}^N \mu_i
\tau(E_i)\geq \sum_{j=2}^M\lambda_j \tau(F_j)$.

(2) If $\mu_1\tau(E_1)>\lambda_1\tau(F_1)$, then there exists a nonzero projection $E_1^\prime\leq E_1$ such that $\mu_1\tau(E_1^\prime)=\lambda_1\tau(F_1)$. Hence
$(1+\mu_1)E_1^\prime+(1-\lambda_1)F_1$ is a finite sum of projections and $\sum_{i=2}^N \mu_i \tau(E_i)+\mu_1\tau(E_1-E_1^\prime)\geq \sum_{j=2}^M\lambda_j \tau(F_j)$.

(3) If $\mu_1\tau(E_1)<\lambda_1\tau(F_1)$, then there exists a nonzero projection $F_1^\prime\leq F_1$ such that $\mu_1\tau(E_1)=\lambda_1\tau(F_1^\prime)$. Hence
$(1+\mu_1)E_1+(1-\lambda_1)F_1^\prime$ is a finite sum of projections and $\sum_{i=2}^N \mu_i \tau(E_i)\geq \sum_{j=2}^M\lambda_j \tau(F_j)+\lambda_1\tau(F_1-F_1^\prime)$.

By induction, we have that $X$ is a finite sum of projections in $\M$.
\end{proof}

\begin{lem}\label{lem2.8:KX finite sum}
Let $E_i$, $1 \leq i \leq K,$ be nonzero projections in $(\M,\tau)$ and let $X=\sum_{i=1}^K\alpha_iE_i$, where $\alpha_i \geq 0$, such that $\tau(X)=1$. Then $KX$
is a finite sum of projections in $\M$.
\end{lem}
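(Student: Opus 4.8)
\textbf{Proof proposal for Lemma \ref{lem2.8:KX finite sum}.}

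The plan is to reduce the statement to Lemma \ref{lem2.7: finite non orth proj finite sum} by splitting the coefficients $K\alpha_i$ into an ``excess'' part above $1$ and a ``defect'' part below $1$, and then checking the hypothesis $\sum(\text{excess})\tau \geq \sum(\text{defect})\tau$ that the earlier lemma requires. Concretely, $KX=\sum_{i=1}^K (K\alpha_i) E_i$, and I would write each $K\alpha_i$ relative to $1$: if $K\alpha_i\geq 1$ set $\mu_i:=K\alpha_i-1\geq 0$ so that the $i$-th term is $(1+\mu_i)E_i$; if $0\leq K\alpha_i<1$ set $\lambda_i:=1-K\alpha_i\in(0,1]$ so that the $i$-th term is $(1-\lambda_i)E_i$. (A coefficient equal to $0$ can simply be dropped; a coefficient equal to $1$ contributes a single projection and is harmless.) Thus $KX$ has exactly the form $\sum_{i\in S_+}(1+\mu_i)E_i+\sum_{j\in S_-}(1-\lambda_j)E_j$ treated in Lemma \ref{lem2.7: finite non orth proj finite sum}.

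The one thing that must be verified is the inequality $\sum_{i\in S_+}\mu_i\tau(E_i)\geq \sum_{j\in S_-}\lambda_j\tau(E_j)$. This is where the hypothesis $\tau(X)=1$ enters: applying $\tau$ to $KX=\sum_{i=1}^K (K\alpha_i)\tau(E_i)$ gives $K=K\tau(X)=\sum_{i=1}^K (K\alpha_i)\tau(E_i)$. On the other hand, summing the identities $K\alpha_i = 1+\mu_i$ (for $i\in S_+$) and $K\alpha_i = 1-\lambda_j$ (for $j\in S_-$), weighted by $\tau(E_i)$, yields
\[
K=\sum_{i=1}^K (K\alpha_i)\tau(E_i)=\sum_{i=1}^K\tau(E_i)+\sum_{i\in S_+}\mu_i\tau(E_i)-\sum_{j\in S_-}\lambda_j\tau(E_j).
\]
Since $\sum_{i=1}^K\tau(E_i)=\tau\bigl(\sum_{i=1}^K E_i\bigr)$ and each $E_i\neq 0$ is a projection, we have $\sum_{i=1}^K\tau(E_i)\geq \tau(R_{KX})$... more simply, $\sum_{i=1}^K\tau(E_i) \geq K\alpha$ where... actually the clean statement is just that $\sum_{i=1}^K\tau(E_i)\geq 1$ because $\sum_{i=1}^K E_i\geq R_{KX}\geq R_X$ and $\tau(X)=1\leq\|X\|\tau(R_X)$ forces $\tau(R_X)>0$; in fact one only needs $\sum_{i=1}^K\tau(E_i)\leq K$ is automatic, while $\sum_{i=1}^K\tau(E_i)\geq 1$: since $X\leq (\max_i\alpha_i)\sum E_i$ and $\tau(X)=1$ we get $\sum\tau(E_i)\geq 1/\max_i\alpha_i$, which need not be $\geq 1$. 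I will instead argue directly: from the displayed identity, $\sum_{i\in S_+}\mu_i\tau(E_i)-\sum_{j\in S_-}\lambda_j\tau(E_j)=K-\sum_{i=1}^K\tau(E_i)\geq K - K = 0$, using $\tau(E_i)\leq 1$ for each $i$ and there being $K$ terms. This gives exactly the required inequality.

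With the inequality in hand, Lemma \ref{lem2.7: finite non orth proj finite sum} applies directly to conclude that $KX$ is a finite sum of projections in $\M$. The main (minor) obstacle is purely bookkeeping: handling the boundary coefficients $K\alpha_i\in\{0,1\}$ so that the hypothesis of Lemma \ref{lem2.7: finite non orth proj finite sum} — which demands \emph{nonzero} projections and $0\leq\lambda_j\leq 1$, $\mu_i\geq 0$ — is met, and making sure the telescoping computation $\sum(K\alpha_i)\tau(E_i)=\sum\tau(E_i)+\sum\mu_i\tau(E_i)-\sum\lambda_j\tau(E_j)$ is written cleanly. No genuinely new idea beyond Lemma \ref{lem2.7: finite non orth proj finite sum} and the trace normalization is needed.
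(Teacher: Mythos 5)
Your proposal is correct and follows essentially the same route as the paper: split the coefficients $K\alpha_i$ around $1$ into $(1+\mu_i)$ and $(1-\lambda_j)$ parts, use $\tau(KX)=K$ together with $\tau(E_i)\leq 1$ (so $\sum_{i=1}^K\tau(E_i)\leq K$) to get $\sum\mu_i\tau(E_i)\geq\sum\lambda_j\tau(F_j)$, and invoke Lemma \ref{lem2.7: finite non orth proj finite sum}. The brief detour about $\tau(R_X)$ and $\max_i\alpha_i$ is unnecessary, but your final direct argument is exactly the paper's.
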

\begin{proof}
Since
\[
K=\tau(KX)=\sum_{i=1}^K(K\alpha_i)\tau(E_i)=\sum_{i=1}^K\tau(E_i)+\sum_{i=1}^K(K\alpha_i-1)\tau(E_i)\leq K+\sum_{i=1}^K(K\alpha_i-1)\tau(E_i),
\]
we have $\sum_{i=1}^K(K\alpha_i-1)\tau(E_i)\geq 0$. Without loss of generality, assume that
\[
KX=\sum_{i=1}^N(1+\mu_i)E_i+\sum_{j=1}^M(1-\lambda_j)F_j,
\]
where $\mu_i \geq 0, 0\leq \lambda_j \leq 1$ and $N+M=K$. Then $\sum_{i=1}^N\mu_i\tau(E_i)\geq \sum_{j=1}^M\lambda_j\tau(F_j)$. By Lemma \ref{lem2.7: finite non orth proj finite sum}, $KX$
is
a finite sum of projections in $\M$.

\end{proof}

\begin{rem}\label{rem2.10: x geq 0 constru}

Let $x\in \M^+$. Set $x_1:=x \chi_{x}[1,+\infty)$, $x_2:=x \chi_x[0,1)$, $p:=\chi_{x}[1,+\infty)$ and $q:=\chi_x[0,1)$, then $x=x_1+x_2$. To avoid triviality, we may assume that $x_1,x_2 \neq
0$. Notice that
\[
x_1\geq p, \ \  \  \  x_2 \leq q.
\]
Thus $\tau_p(x_1):=1+\mu\geq1$ and $\tau_q(x_2):=1-\lambda \leq 1$.
So by Lemma \ref{lem1: lemma 3.2}, $\frac{x_1}{1+\mu}=\sum_{i=1}^\infty p_i$ for some projections $\{p_i\}_{i=1}^\infty$ in $p\M p$ and $\frac{x_2}{1-\lambda}=\sum_{j=1}^\infty q_j$ for
some
projections $\{q_j\}_{j=1}^\infty$ in $q\M q$. Therefore,
\[
x=x_1+x_2=(1+\mu)\sum_{i=1}^\infty p_i+ (1-\lambda)\sum_{j=1}^\infty q_j.
\]
The following three lemmas show that $\{p_i\}_{i=1}^\infty$ and $\{q_j\}_{j=1}^\infty$ have very nice properties.
\end{rem}

\begin{lem}\label{q_j property}Let $q_j$ be as in Remark \ref{rem2.10: x geq 0 constru}. Then
$\forall N \in \NNN$, $(1-\lambda)\sum_{j=N+1}^\infty q_j \leq \bigvee_{j=N+1}^\infty q_j$.
\end{lem}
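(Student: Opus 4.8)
The plan is to derive the inequality directly from the decomposition $x_2=(1-\lambda)\sum_{j=1}^\infty q_j$ recorded in Remark~\ref{rem2.10: x geq 0 constru}, where $q:=\chi_x[0,1)$ and the $q_j$ are projections in $q\M q$, together with the trivial bound $x_2\le q\le I$. Fix $N\in\NNN$ and set $r_N:=\bigvee_{j=N+1}^\infty q_j$ and $b:=(1-\lambda)\sum_{j=N+1}^\infty q_j$; since the full series converges strongly so does its tail, hence $b$ is a well-defined element of $\M^+$. The assertion to prove is $b\le r_N$, and I would obtain it from two facts: that $b$ is compressed by $r_N$ (meaning $r_Nbr_N=b$), and that $\|b\|\le 1$.

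For the first fact, $q_j\le r_N$ for every $j>N$ by the definition of the supremum, hence $r_Nq_jr_N=q_j$; since compression by the fixed projection $r_N$ is strongly continuous on bounded sets it commutes with the (strongly convergent) tail sum, which gives $r_Nbr_N=b$, and therefore $b=r_Nbr_N\le\|b\|\,r_N$. For the second fact, Remark~\ref{rem2.10: x geq 0 constru} yields $b=x_2-(1-\lambda)\sum_{j=1}^N q_j\le x_2$, while $x_2=x\chi_x[0,1)\le\chi_x[0,1)=q\le I$, so $0\le b\le I$ and hence $\|b\|\le 1$. Combining the two facts, $b\le r_N$, that is, $(1-\lambda)\sum_{j=N+1}^\infty q_j\le\bigvee_{j=N+1}^\infty q_j$, which is the claim.

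I do not expect a genuine obstacle here: the only mildly delicate point is the interchange of the compression $x\mapsto r_Nxr_N$ with the infinite strong sum, which is routine strong-operator-topology bookkeeping (left and right multiplication by a fixed bounded operator are strongly continuous). If one prefers to avoid even that, the compression step can be replaced by the observation that the range projection of the positive operator $b$ equals $\bigvee_{j>N}R_{q_j}=\bigvee_{j>N}q_j=r_N$, making $b\le\|b\|\,R_b=\|b\|\,r_N$ immediate.
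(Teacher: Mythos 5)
Your proof is correct and follows essentially the same route as the paper: both rest on the observation that the tail $b=(1-\lambda)\sum_{j>N}q_j$ satisfies $r_Nbr_N=b$ for $r_N=\bigvee_{j>N}q_j$ and that $b\le x_2\le q\le I$, the only cosmetic difference being that you pass through the norm bound $\|b\|\le 1$ while the paper compresses the inequality $b\le x_2\le q$ by $r_N$ directly. No gap to report.
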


\begin{proof}
For each $N\in \NNN$, $(1-\lambda)\sum_{j=N+1}^\infty q_j \leq (1-\lambda)\sum_{j=1}^\infty q_j=x_2$. Then
\[
\begin{aligned}
(1-\lambda)\sum_{j=N+1}^\infty q_j
 & =\left( \bigvee_{j=N+1}^\infty q_j\right) \left( (1-\lambda)\sum_{j=N+1}^\infty q_j \right) \left( \bigvee_{j=N+1}^\infty q_j\right)\\
& \leq \left( \bigvee_{j=N+1}^\infty q_j\right) x_2 \left( \bigvee_{j=N+1}^\infty q_j\right)\\
&\leq \left( \bigvee_{j=N+1}^\infty q_j\right) q \left( \bigvee_{j=N+1}^\infty q_j\right)\\
&=  \bigvee_{j=N+1}^\infty q_j.
\end{aligned}
\]
\end{proof}

\begin{lem}\label{p_i property}Let $p_i$ be as in Remark \ref{rem2.10: x geq 0 constru}. Then
$\forall N \in \NNN$, $(1+\mu)\sum_{i=N+1}^\infty p_i \geq \bigvee_{i=N+1}^\infty p_i$.
\end{lem}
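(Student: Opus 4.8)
The plan is to prove the ``locally invertible'' counterpart of Lemma~\ref{q_j property}. The naive transcription of that proof fails at once: for the $q_j$'s one discards $q_1,\dots,q_N$ and still stays below the \emph{fixed} projection $x_2\le q$, whereas discarding $p_1,\dots,p_N$ only makes $\sum_i p_i$ smaller, so there is nothing against which to compress $x_1$. Instead I will track a quantitative invariant of the particular decomposition $\frac{x_1}{1+\mu}=\sum_{i\ge 1}p_i$, using not merely that it exists but the way it is produced in the proof of Lemma~\ref{lem1: lemma 3.2} in \cite{CFY}.

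Put $c:=\frac{1}{1+\mu}\in(0,1]$ and let $a:=\frac{x_1}{1+\mu}$, a positive element of $p\M p$, so that $a\ge c\,p$ (this is $x_1\ge p$ rescaled), $R_a=p$, and $\tau_p(a)=1$; for $N\ge 0$ set $r_N:=a-\sum_{i=1}^{N}p_i$. Since the $p_i$ are positive and $\sum_i p_i=a$ converges strongly, the partial sums increase to $a$, hence $r_N=\sum_{i>N}p_i\ge 0$ and $R_{r_N}=\bigvee_{i>N}p_i$. Because $(1+\mu)r_N=\frac{1}{c}\,r_N$, the lemma is equivalent to
\[
r_N\ \ge\ c\,R_{r_N}\qquad\text{for all }N\ge 0,
\]
whose $N=0$ case is exactly $x_1\ge p$. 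I would prove it by induction on $N$. In the proof of Lemma~\ref{lem1: lemma 3.2} the $p_i$ are obtained by repeatedly peeling a projection off the current remainder, passing from $r_N\ge 0$ to $r_{N+1}=r_N-p_{N+1}\ge 0$; the crucial point is that one can arrange $p_{N+1}$ so that $r_{N+1}$ drops exactly the directions along which $r_N$ has already decayed to a multiple of its range projection, so that $r_N$ still exceeds $c$ on what survives. This is where the type ${\rm II}_1$ structure is essential: $p_{N+1}$ must be taken ``tilted'' relative to the spectral resolution of $r_N$ (compress a $2\times 2$ corner and rotate), which is precisely the mechanism used in the proof of Lemma~\ref{lem1: lemma 3.2}.

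The step I expect to be the main obstacle is exactly this inductive one: controlling the range projection $R_{r_{N+1}}$ and the bottom of the spectrum of $r_{N+1}$ on it simultaneously, which the bare statement of Lemma~\ref{lem1: lemma 3.2} does not provide. Concretely one needs the following: whenever $r\ge c\,R_r$ and $\tau(r)>0$, there is a projection $p$ with $r-p\ge 0$, $r-p\ge c\,R_{r-p}$, and $\tau(p)$ at least a fixed fraction of $\tau(R_r)$ (so that iteration yields a strong sum). I would obtain this by re-running the construction of Lemma~\ref{lem1: lemma 3.2} for the specific input $a=x_1/(1+\mu)$ while carrying along the bookkeeping ``$r_N\ge c\,R_{r_N}$'', which is available at stage $0$ precisely because $x_1\ge p$ and is preserved by each tilted peeling step. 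Once the invariant is established for all $N$, multiplying by $1+\mu$ and using $R_{r_N}=\bigvee_{i>N}p_i$ gives $(1+\mu)\sum_{i>N}p_i\ge\bigvee_{i>N}p_i$, as claimed; note incidentally that the projections $R_{r_N}=\bigvee_{i>N}p_i$ then decrease in $N$.
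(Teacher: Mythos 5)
There is a genuine gap: your entire argument hinges on the inductive step ``whenever $r\ge c\,R_r$ one can peel off a projection $p_{N+1}$ with $r-p_{N+1}\ge 0$ and $r-p_{N+1}\ge c\,R_{r-p_{N+1}}$, with a definite trace gain,'' and you give no proof of it --- you yourself identify it as the main obstacle and then only say you ``would obtain this by re-running the construction'' of Lemma~\ref{lem1: lemma 3.2} ``while carrying along the bookkeeping.'' Nothing in the proposal verifies that the peeling procedure of \cite{CFY} actually preserves the invariant $r_N\ge c\,R_{r_N}$, nor is an alternative peeling constructed; so the induction does not close and the proposal is a plan contingent on an unproved strengthening of Lemma~\ref{lem1: lemma 3.2}. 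There is also a secondary mismatch: the lemma is stated for the $p_i$ ``as in Remark~\ref{rem2.10: x geq 0 constru},'' i.e.\ for the decomposition already furnished by Lemma~\ref{lem1: lemma 3.2}, whereas your route would at best produce \emph{some} specially constructed decomposition with the desired property, which then forces you to reinterpret Remark~\ref{rem2.10: x geq 0 constru} as making that particular choice.

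The paper avoids the induction entirely by a structural observation that makes the statement true for \emph{any} decomposition $\frac{x_1}{1+\mu}=\sum_i p_i$: by the argument (1)$\Rightarrow$(2)$\Rightarrow$(1) in Lemma~\ref{lem2.2:TFAE} (valid for $N=\infty$ by Remark~\ref{(1)(2)}), the given projections can be written as $p_i=z_1^{1/2}f_iz_1^{1/2}$ with $z_1=\frac{x_1}{1+\mu}$ and $\{f_i\}$ mutually orthogonal projections. Hence the tail is
\[
(1+\mu)\sum_{i=N+1}^\infty p_i=x_1^{1/2}fx_1^{1/2},\qquad f:=\sum_{i=N+1}^\infty f_i\le p,
\]
and since $fx_1f\ge fpf\ge f$, the spectrum of $fx_1f$, hence of the unitarily (in the relevant sense, $T^*T$ versus $TT^*$) equivalent $x_1^{1/2}fx_1^{1/2}$, lies in $\{0\}\cup[1,+\infty)$; as the range projection of $x_1^{1/2}fx_1^{1/2}$ is exactly $\bigvee_{i=N+1}^\infty p_i$, the inequality follows at once. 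If you want to salvage your approach, you must actually prove the invariant-preserving peeling step; as written, the claim that the tilted $2\times 2$ rotation ``can be arranged'' to keep $r_{N+1}\ge c\,R_{r_{N+1}}$ is precisely the content that is missing.
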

\begin{proof}
Set $z_1:=\frac{x_1}{1+\mu}$. Then $\tau(z_1)=1$. By Lemma~\ref{lem1: lemma 3.2}, Remark~\ref{(1)(2)}, and $(2)\Rightarrow (1)$ of the proof of Lemma~\ref{lem2.2:TFAE}, $p_i=z_1^{\frac{1}{2}}f_i z_1^{\frac{1}{2}}$ for some mutually orthogonal projections
$f_i$.
Set projection $f:=\sum_{i=N+1}^\infty f_i\leq p$, then we have that
\[
(1+\mu)\sum_{i=N+1}^\infty p_i=(1+\mu)\sum_{i=N+1}^\infty z_1^{\frac{1}{2}}f_i z_1^{\frac{1}{2}}=
\sum_{i=N+1}^\infty x_1^{\frac{1}{2}}f_i x_1^{\frac{1}{2}}=x_1^{\frac{1}{2}} f x_1^{\frac{1}{2}}.
\]
Since $fx_1f \geq fpf\geq f$, $\sigma(f x_1f) \subseteq \{0\} \cup [1,+\infty)$. As $x_1^{\frac{1}{2}} f x_1^{\frac{1}{2}}$ and $fx_1f$ are unitary equivalent, then $\sigma(x_1^{\frac{1}{2}} f
x_1^{\frac{1}{2}}) \subseteq \{0\} \cup [1,+\infty)$. Since $x_1^{\frac{1}{2}} f x_1^{\frac{1}{2}}$ restricted to $R_{(x_1^{\frac{1}{2}} f x_1^{\frac{1}{2}})}H$ is injective, $\sigma\left(x_1^{\frac{1}{2}} f
x_1^{\frac{1}{2}}|_{R_{(x_1^{\frac{1}{2}} f x_1^{\frac{1}{2}})}H}\right)\subseteq[1,+\infty)$.
Since $R(x_1^{\frac{1}{2}} f x_1^{\frac{1}{2}})=R(\sum_{i=N+1}^\infty p_i)=\bigvee_{i=N+1}^\infty p_i$,
 $(1+\mu)\sum_{i=N+1}^\infty p_i=x_1^{\frac{1}{2}} f x_1^{\frac{1}{2}} \geq \bigvee_{i=N+1}^\infty p_i$.

\end{proof}

\begin{lem} \label{p_i,q_j property}Let $p_i$ and $q_j$ be as in Remark \ref{rem2.10: x geq 0 constru}. Then
we have the following properties.
\begin{enumerate}
\item $p=\chi_{x}[1,+\infty)=\bigvee_{i=1}^\infty p_i$;
\item $q=\chi_{x}[0,1)=\bigvee_{j=1}^\infty q_j$;
\item $\forall N \in \NNN$, $\sum_{i=N+1}^\infty \tau(p_i)= \tau\left(\bigvee_{i=N+1}^\infty p_i \right);$
\item $\forall N \in \NNN$, $\sum_{j=N+1}^\infty \tau(q_j)= \tau\left(\bigvee_{j=N+1}^\infty q_j \right).$
\end{enumerate}
\end{lem}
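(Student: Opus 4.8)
The plan is to identify the joins $\bigvee_i p_i$ and $\bigvee_j q_j$ with the range projections of $x_1$ and $x_2$, and then to read the traces in (3)--(4) off Murray--von Neumann equivalences coming from the explicit form of the $p_i$ and $q_j$ used in the proof of Lemma~\ref{p_i property}. Throughout I keep the notation of Remark~\ref{rem2.10: x geq 0 constru}: $z_1:=\frac{x_1}{1+\mu}=\sum_{i=1}^\infty p_i$ and $z_2:=\frac{x_2}{1-\lambda}=\sum_{j=1}^\infty q_j$, with $x_1\ge p$ and $x_2\le q$.

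For (1) and (2): since $0\le p_i\le\sum_k p_k=z_1$ we have $p_i=R_{p_i}\le R_{z_1}=R_{x_1}$, hence $\bigvee_i p_i\le R_{x_1}$; conversely, if $\xi$ is orthogonal to every $p_i$ then $\langle z_1\xi,\xi\rangle=\sum_i\langle p_i\xi,\xi\rangle=0$, so $\xi\in\ker z_1=R_{x_1}^{\perp}$, giving $\bigvee_i p_i\ge R_{x_1}$. Thus $\bigvee_i p_i=R_{x_1}=p$, the last equality because $x_1\ge p$ makes $x_1$ invertible in $p\M p$. The same two lines give $\bigvee_j q_j=R_{x_2}=q$; for the final equality one uses that $x$ has trivial kernel, which holds in the intended applications once one passes to $R_x\M R_x$ (otherwise one simply reads $q$ as $R_{x_2}$).

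For (3) and (4): by the proof of Lemma~\ref{p_i property} I may take $p_i=z_1^{1/2}f_iz_1^{1/2}=\frac{1}{1+\mu}x_1^{1/2}f_ix_1^{1/2}$ with $\{f_i\}$ mutually orthogonal projections, $f_i\le p$ and $\sum_i f_i=p$. Fix $N$ and set $f:=\sum_{i>N}f_i$. For $b:=f_ix_1^{1/2}$ we have $b^{*}b=(1+\mu)p_i$ and $bb^{*}=f_ix_1f_i\ge f_i$ (since $f_i\le p$ and $x_1\ge p$), so $R_{bb^{*}}=f_i$, and the polar decomposition of $b$ gives $p_i=R_{b^{*}b}\sim R_{bb^{*}}=f_i$; hence $\tau(p_i)=\tau(f_i)$. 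Applying the same to $b:=fx_1^{1/2}$, and using $(1+\mu)\sum_{i>N}p_i=x_1^{1/2}fx_1^{1/2}=b^{*}b$ together with $fx_1f\ge f$, gives $\bigvee_{i>N}p_i=R_{b^{*}b}\sim R_{bb^{*}}=R(fx_1f)=f$. Therefore
\[
\sum_{i>N}\tau(p_i)=\sum_{i>N}\tau(f_i)=\tau(f)=\tau\Bigl(\bigvee_{i>N}p_i\Bigr),
\]
which is (3). Part (4) runs identically with $q_j=z_2^{1/2}h_jz_2^{1/2}$ and $\{h_j\}$ mutually orthogonal, $\sum_j h_j=q$: now $x_2\le q$ and $h_j\le R_{x_2}$ force $h_jx_2h_j$ and $hx_2h$ (with $h:=\sum_{j>N}h_j$) to be injective on their corners, so $R(h_jx_2h_j)=h_j$ and $R(hx_2h)=h$, and the same chain of equivalences yields $\tau(q_j)=\tau(h_j)$ and $\tau(\bigvee_{j>N}q_j)=\tau(h)=\sum_{j>N}\tau(q_j)$.

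The step I expect to be the main obstacle is the Murray--von Neumann bookkeeping in (3)--(4): one must check, in each use of polar decomposition, that the operator $bb^{*}$ has full range on the corner where it lives, and this is exactly where $x_1\ge p$ and $x_2\le q$ enter, with opposite effects --- $fx_1f$ is bounded below, whereas $hx_2h$ is merely injective, which is still enough for range projections. One also needs the fact, already invoked in the proof of Lemma~\ref{p_i property}, that the strong-sum decompositions of $z_1$ and $z_2$ may be chosen of the form $z^{1/2}(\,\cdot\,)z^{1/2}$ applied to a partition of the identity of the ambient corner into mutually orthogonal projections (Lemma~\ref{lem1: lemma 3.2}, Remark~\ref{(1)(2)}, and the implication $(2)\Rightarrow(1)$ of Lemma~\ref{lem2.2:TFAE}).
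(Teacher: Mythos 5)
Your argument is correct, but for parts (3)--(4) it follows a genuinely different route from the paper. For (1)--(2) you do what the paper does: identify $\bigvee_i p_i$ with the range projection $R_{x_1}=p$ (and likewise for the $q_j$). For (3)--(4), however, the paper never returns to the explicit form of the $p_i$: it observes that $\sum_i\tau_p(p_i)=\tau_p\bigl(\tfrac{x_1}{1+\mu}\bigr)=1=\tau_p\bigl(\bigvee_i p_i\bigr)$ by (1), and then rules out a strict tail deficit by contradiction, using subadditivity of the trace over joins, $\tau_p\bigl(\bigvee_{i\geq 1}p_i\bigr)\leq\tau_p(p_1)+\cdots+\tau_p(p_N)+\tau_p\bigl(\bigvee_{i>N}p_i\bigr)$. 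That argument is shorter and works for any projections with $\sum_i p_i=\tfrac{x_1}{1+\mu}$, however they were produced. You instead exploit the representation $p_i=z_1^{1/2}f_iz_1^{1/2}$ (the same one the paper invokes in the proof of Lemma \ref{p_i property}) and polar decomposition to obtain the Murray--von Neumann equivalences $p_i\sim f_i$ and $\bigvee_{i>N}p_i\sim\sum_{i>N}f_i$; this costs more bookkeeping but buys a stronger conclusion (equivalence of projections, not merely equality of traces) and does not need (1) as input. Two remarks. Your kernel caveat is accurate and in fact sharper than the paper: if $\chi_x(\{0\})\neq 0$ then every $q_j\leq R_{x_2}=\chi_x(0,1)<q$ while $\sum_j\tau(q_j)=\tau(q)$, so (2) and (4) (already at $N=0$) fail as literally stated, and the paper's ``similar'' proof of (2) silently uses $R_{x_2}=\chi_x[0,1)$, i.e.\ the same implicit assumption; in the applications one may harmlessly read $q$ as $R_{x_2}$. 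Second, in your treatment of (4) the claim $h_j\leq R_{x_2}$ is itself equivalent to that assumption (since $\sum_j h_j=q$), so you should state explicitly that you assume $\chi_x(\{0\})=0$ there as well; under it your injectivity argument for $hx_2h$ is fine, and note that the construction even gives $h_jz_2h_j=h_j$ exactly, so the individual equivalences $q_j\sim h_j$ need no separate argument.
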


\begin{proof}
We only give the proofs of (1) and (3). The proofs of (2) and (4) are similar.

(1). Since $(\bigvee_{i=1}^\infty p_i)x_1=x_1$, then $\chi_{x}[1,+\infty) \leq \bigvee_{i=1}^\infty p_i$. As $p_i\leq \chi_{x}[1,+\infty)$ for all $i$, thus $\bigvee_{i=1}^\infty p_i \leq
\chi_{x}[1,+\infty)$. So $\chi_{x}[1,+\infty)=\bigvee_{i=1}^\infty p_i$.

(3). By (1), we have that
\[
\sum_{i=1}^\infty \tau_p(p_i)=\tau_p\left(\frac{x_1}{1+\mu}\right)=1=\tau_p(\chi_{x}[1,+\infty))=\tau_p(\bigvee_{i=1}^\infty p_i).
\]
If $\tau\left(\bigvee_{i=N+1}^\infty p_i \right) < \sum_{i=N+1}^\infty \tau(p_i) $ for some $N$, then
$\tau_p\left(\bigvee_{i=N+1}^\infty p_i \right) < \sum_{i=N+1}^\infty \tau_p(p_i) $. Therefore,
\[
1=\tau_p\left(\bigvee_{i=1}^\infty p_i \right)\leq \tau_p(p_1)+\cdots+\tau_p(p_{N})+\tau_p\left(\bigvee_{i=N+1}^\infty p_i \right) <\sum_{i=1}^\infty \tau_p(p_i)=1,
\]
which is a contradiction. Then we have (3).

\end{proof}

\begin{rem}\label{rem2.10': x geq 0 constru}

Let $x\in \M^+$ with $\tau(x)\geq 1$. Set $x_1:=x \chi_{x}[1,+\infty)$, $x_2:=x \chi_x[0,1)$, $p:=\chi_{x}[1,+\infty)$ and $q:=\chi_x[0,1)$, then $x=x_1+x_2$.

Case 1: $\tau(x)=1$. If $\tau_p(x_1)=1$, then $x_1=p$ since $x_1\geq p$. In this case, $\tau_q(x_2)=1$ and $x_2=q$. Thus $x=p+q$ is a finite sum of projections. So in the following we will
assume that $\mu>0$ and therefore $0<\lambda\leq 1$.  Then we have that $\tau_p(\frac{x_1}{1+\mu})=1$ and
$\tau_q(\frac{x_2}{1-\lambda})=1$.
Since $\tau(x)=1$, $\tau\left(\sum_{i=1}^\infty p_i\right)=\tau(p)$ and $\tau\left(\sum_{j=1}^\infty q_j\right)=\tau(q)$, we have
\[
1=\tau(x)=\tau(x_1+x_2)=(1+\mu)\sum_{i=1}^\infty \tau(p_i)+(1-\lambda)\sum_{j=1}^\infty \tau(q_j)=1+\mu\sum_{i=1}^\infty \tau(p_i)-\lambda\sum_{j=1}^\infty \tau(q_j).
\]
Thus
\[
\mu\sum_{i=1}^\infty \tau(p_i)=\lambda\sum_{j=1}^\infty \tau(q_j).
\]

Case 2: $\tau(x)>1$. Since $\tau\left(\sum_{i=1}^\infty p_i\right)=\tau(p)$ and $\tau\left(\sum_{j=1}^\infty q_j\right)=\tau(q)$, similarly, we have
\[
\mu\sum_{i=1}^\infty \tau(p_i)>\lambda\sum_{j=1}^\infty \tau(q_j).
\]
\end{rem}

The idea of the proof of the following lemma comes from the proof of Lemma \ref{lem1: lemma 2.8} in ~\cite{HKNZ}.

\begin{lem}\label{lem2.13:1+ mu X posi line comb K depe mu}
Let $X=(1+\mu)E+(1-\lambda)\sum_{i=1}^\infty F_i$, where $\mu>0$, $0 < \lambda \leq 1$, and $E$, $\{F_i\}_{i=1}^\infty$ are nonzero projections in $(\M,\tau)$. If
$E$, $\{F_i\}_{i=1}^\infty$ satisfy the following conditions
\begin{enumerate}
\item $E \bot (\bigvee_{i=1}^\infty F_i);$
\item $\sum_{i=1}^\infty \tau(F_i) \leq \tau(E);$
\item $(1-\lambda)\sum_{i=1}^\infty F_i\leq \bigvee_{i=1}^\infty F_i,$
\end{enumerate}
then $X=\sum_{i=1}^K \alpha_i E_i$, where $\alpha_i \geq 0$ and $E_i$ are some projections in $\M$. Furthermore, $K\leq 15 + \lceil\frac{28}{\mu}\rceil$.
\end{lem}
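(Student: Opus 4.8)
The plan is to strip off two explicit projections from $X$ so that what remains is a single \emph{locally invertible} operator, and then to invoke Corollary~\ref{cor1: Corollary 2.5}; with $N_0=12$ and $V_0=14$ this yields the stated bound. Set $Y:=(1-\lambda)\sum_{i=1}^{\infty}F_i$ and $f:=\bigvee_{i=1}^{\infty}F_i$. By hypotheses (1) and (3) we have $E\perp f$ and $0\le Y\le f$, so $Y=fYf\in f\M f$; by (2), $\tau(f)\le\sum_i\tau(F_i)\le\tau(E)$. Hence we may split $E=E'+E''$ into orthogonal projections with $\tau(E')=\tau(f)$, so that $E'\sim f$; fix a partial isometry $u\in\M$ with $u^{*}u=f$ and $uu^{*}=E'$. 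Writing $X=\big[(1+\mu)E'+Y\big]+(1+\mu)E''$, the first summand is supported on the projection $E'+f$, and under the usual $*$-isomorphism $(E'+f)\M(E'+f)\cong M_2(f\M f)$ sending $E'\leftrightarrow e_{11}$, $f\leftrightarrow e_{22}$, $u\leftrightarrow e_{12}$, it is carried to $\left(\begin{smallmatrix}1+\mu&0\\ 0&Y\end{smallmatrix}\right)$, where $1+\mu$ abbreviates $(1+\mu)\mathbf{1}_{f\M f}$.

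The crux is the decomposition (legitimate because $0\le Y\le f$)
\[
\begin{pmatrix}1+\mu&0\\ 0&Y\end{pmatrix}=\begin{pmatrix}1-Y&0\\ 0&Y\end{pmatrix}+\begin{pmatrix}\mu+Y&0\\ 0&0\end{pmatrix},
\]
both terms being positive. For the first term, put
\[
Q^{\pm}:=\begin{pmatrix}1-Y& \pm\big(Y(1-Y)\big)^{1/2}\\ \pm\big(Y(1-Y)\big)^{1/2}&Y\end{pmatrix}\in M_2(f\M f)\subseteq\M ;
\]
since $Y$ commutes with $\big(Y(1-Y)\big)^{1/2}$, a direct computation gives $(Q^{\pm})^{2}=Q^{\pm}=(Q^{\pm})^{*}$, so $Q^{+},Q^{-}$ are projections and $\tfrac12 Q^{+}+\tfrac12 Q^{-}=\left(\begin{smallmatrix}1-Y&0\\ 0&Y\end{smallmatrix}\right)$. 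The second term $\left(\begin{smallmatrix}\mu+Y&0\\ 0&0\end{smallmatrix}\right)$ corresponds, under the isomorphism above, to $u(\mu f+Y)u^{*}=\mu E'+uYu^{*}\in E'\M E'$; and since $Y\le f=u^{*}u$ we get $0\le uYu^{*}\le uu^{*}=E'$, hence $\mu E'\le \mu E'+uYu^{*}\le(1+\mu)E'$.

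Assembling the pieces gives
\[
X=\tfrac12 Q^{+}+\tfrac12 Q^{-}+b,\qquad b:=(1+\mu)E''+\mu E'+uYu^{*} .
\]
The operator $b$ is supported on $E=E''+E'$, equals $(1+\mu)E''$ on $E''$, and lies between $\mu E'$ and $(1+\mu)E'$ on $E'$; therefore $R_b=E$, $b\ge\mu R_b$ and $\|b\|\le 1+\mu$, so $b$ is locally invertible. By Corollary~\ref{cor1: Corollary 2.5}, $b$ is a positive combination of at most
\[
N_0+\big\lceil V_0(\tfrac{1+\mu}{\mu}-1)\big\rceil+1=13+\big\lceil 14/\mu\big\rceil
\]
projections of $\M$. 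Adjoining $Q^{+},Q^{-}$ (each with coefficient $\tfrac12$) we obtain $X=\sum_{i=1}^{K}\alpha_iE_i$ with $\alpha_i>0$ and $K\le 15+\lceil 14/\mu\rceil\le 15+\lceil 28/\mu\rceil$.

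The only non‑routine step is the matrix decomposition: it transfers $Y$ off the corner $f$ — where $Y$ need not be bounded below, so that $X$ itself is typically not locally invertible — onto a copy inside $E$, where the extra summand $+\mu$ coming from $(1+\mu)E$ turns it into a locally invertible operator that Corollary~\ref{cor1: Corollary 2.5} can handle. This $2\times2$‑matrix‑over‑a‑corner device is exactly the mechanism used in the proof of Lemma~\ref{lem1: lemma 2.8} of \cite{HKNZ}, and once it is in place, the elementary fact that $\left(\begin{smallmatrix}1-Y&0\\ 0&Y\end{smallmatrix}\right)$ is a positive combination of two projections finishes the argument.
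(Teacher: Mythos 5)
Your proposal is correct and takes essentially the same route as the paper's proof: both split off two projections via the $2\times 2$-corner device of Lemma 2.8 in \cite{HKNZ} (the paper builds $q_{\pm}$ from the normalized element $b/\|b\|$, you build $Q^{\pm}$ from $Y$ directly using hypothesis (3) in the form $0\le Y\le f$), leaving a locally invertible positive remainder supported on $E$ that is handled by Corollary \ref{cor1: Corollary 2.5}. Your unnormalized bookkeeping even yields the slightly sharper count $K\le 15+\lceil 14/\mu\rceil$, which of course implies the stated bound $15+\lceil 28/\mu\rceil$.
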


\begin{proof}
Set $d:=(1+\mu)E$ and $b:=(1-\lambda)\sum_{i=1}^\infty F_i$, hence $X=b+d$. Notice that by hypothesis (3),
\[
\|b\|\leq \|\bigvee_{i=1}^\infty F_i\|\leq 1.
\]
Since $\sum_{i=1}^\infty \tau(F_i) \leq \tau(E)$, we can choose a partial isometry $V\in \M$ for which $V^*V=\bigvee_{i=1}^\infty F_i$ and $E^\prime:=VV^*\leq E.$ Define the projections
\[
q_-:=\begin{pmatrix}
\frac{b}{\|b\|}    &    -\left( \frac{b}{\|b\|} - \frac{b^2}{\|b\|^2}\right)^\frac{1}{2}V^* \\
 -V\left( \frac{b}{\|b\|} - \frac{b^2}{\|b\|^2}\right)^\frac{1}{2} &  E^\prime - \frac{1}{\|b\|} VbV^*
\end{pmatrix}
\]
and
\[
q_+:=\begin{pmatrix}
\frac{b}{\|b\|}    &    \left( \frac{b}{\|b\|} - \frac{b^2}{\|b\|^2}\right)^\frac{1}{2}V^* \\
 V\left( \frac{b}{\|b\|} - \frac{b^2}{\|b\|^2}\right)^\frac{1}{2} &  E^\prime - \frac{1}{\|b\|} VbV^*
\end{pmatrix}.
\]
Then $q_-+q_+=\frac{2}{\|b\|}b+2E^\prime -\frac{2}{\|b\|}VbV^*$, and hence
\[
X=b+d=\frac{\|b\|}{2}q_- +\frac{\|b\|}{2}q_+ +d-\|b\|E^\prime+VbV^*.
\]
Set $A:=d-\|b\|E^\prime+VbV^*$, then $\|A\|\leq \|d\|+\|VbV^*\|\leq 1+\mu+1=2+\mu$.
Since
\[
A=d-\|b\|E^\prime+VbV^* \geq d-\|b\|E^\prime=(1+\mu)E-\|b\|E^\prime \geq (1+\mu-\|b\|)E \geq \mu E,
\]
it follows by Proposition \ref{prop1: proposition 2.3} and Lemma \ref{lem1: lemma 2.4} that the locally invertible element $A$ can be expressed as a positive linear combination of
projections in $\M$ no more
than
\[
N_0+\lceil V_0\left( \|A\|\|A^{-1}\| -1 \right)\rceil +1 \leq 12+\lceil 14\left(\frac{2+\mu}{\mu}-1\right)\rceil +1=13+\lceil \frac{28}{\mu}\rceil.
\]
So $X$ is a positive combination of $K\leq 15+\lceil \frac{28}{\mu}\rceil$ projections in $\M$.
\end{proof}

\begin{lem}\label{lem2.14:1+mu E finite sum}

Let $X=(1+\mu)E+(1-\lambda) \sum_{i=1}^\infty F_i$, where $\mu>0$, $0< \lambda \leq 1$, and $E$, $\{F_i\}_{i=1}^\infty$ are nonzero projections in $(\M,\tau)$. If
$E$, $\{F_i\}_{i=1}^\infty$ satisfy the following conditions
\begin{enumerate}
\item $E \bot (\bigvee_{i=1}^\infty F_i);$
\item $\lambda\sum_{i=1}^\infty \tau(F_i) < \mu\tau(E);$
\item $\forall N\in \NNN, (1-\lambda)\sum_{i=N+1}^\infty F_i\leq \bigvee_{i=N+1}^\infty F_i,$
\end{enumerate}
then $X$ is a finite sum of projections in $\M$.
\end{lem}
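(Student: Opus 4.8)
\emph{Proof plan.} I would begin with two easy reductions. If $\lambda=1$, then $(1-\lambda)\sum_iF_i=0$ and $X=(1+\mu)E$; since the nonzero projection $\bigvee_iF_i$ is orthogonal to $E$ we have $E\neq I$, so choosing a nonzero projection $F\perp E$ with $\tau(F)<\mu\tau(E)$ (possible since $\mu\tau(E)>\lambda\sum_i\tau(F_i)>0$ by (2)) and applying Lemma~\ref{lem1: lemma 6.1} to $a=(1+\mu)E+0\cdot F$ shows $X$ is a finite sum of projections. If only finitely many $F_i$ are nonzero, Lemma~\ref{lem2.7: finite non orth proj finite sum} applies directly, the required inequality being exactly (2). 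So from now on $0<\lambda<1$ and there are infinitely many $F_i$; then (3) with $N=0$ gives $(1-\lambda)\sum_iF_i\le\bigvee_iF_i\le I$, hence $S:=\sum_{i=1}^\infty\tau(F_i)<\infty$ and $\sum_{i>N}\tau(F_i)\to0$ as $N\to\infty$.

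The main step is to split $X$ into a finite ``head'' and a ``tail''. Because $\lambda S<\mu\tau(E)$, fix $N$ so large that $\sum_{i>N}\tau(F_i)+\tfrac{\lambda}{\mu}\sum_{i\le N}\tau(F_i)<\tau(E)$, and (using that $\M$ is a type ${\rm II}_1$ factor) write $E=E_1+E_2$ with $E_1,E_2$ nonzero orthogonal projections, automatically orthogonal to $\bigvee_iF_i$, chosen so that
\[
\sum_{i>N}\tau(F_i)\ \le\ \tau(E_2)\ \le\ \tau(E)-\tfrac{\lambda}{\mu}\sum_{i\le N}\tau(F_i).
\]
Put $Y_1:=(1+\mu)E_1+(1-\lambda)\sum_{i=1}^NF_i$ and $Y_2:=(1+\mu)E_2+(1-\lambda)\sum_{i>N}F_i$, so that $X=Y_1+Y_2$. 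Since $\mu\tau(E_1)=\mu\tau(E)-\mu\tau(E_2)\ge\lambda\sum_{i\le N}\tau(F_i)$, Lemma~\ref{lem2.7: finite non orth proj finite sum} shows $Y_1$ is a finite sum of projections. For $Y_2$: (1) gives $E_2\perp\bigvee_{i>N}F_i$; we have $\tau(\bigvee_{i>N}F_i)\le\sum_{i>N}\tau(F_i)\le\tau(E_2)$; and (3) at index $N$ gives $(1-\lambda)\sum_{i>N}F_i\le\bigvee_{i>N}F_i$. Thus $Y_2$ meets the hypotheses of Lemma~\ref{lem2.13:1+ mu X posi line comb K depe mu} applied to the reindexed tail, so $Y_2$ is a positive combination $\sum_{l=1}^K\alpha_lH_l$ of $K\le15+\lceil28/\mu\rceil$ projections in $\M$.

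What remains — and what I expect to be the crux — is to upgrade $Y_2$ from a positive combination to an honest finite sum of projections. Grouping the $H_l$ with $\alpha_l\ge1$ against those with $\alpha_l<1$, Lemma~\ref{lem2.7: finite non orth proj finite sum} reduces this to the inequality $\tau(Y_2)\ge\sum_{l=1}^K\tau(H_l)$; equivalently, after normalising $Y_2/\tau(Y_2)$ to trace $1$ and invoking Lemma~\ref{lem2.8:KX finite sum}, one needs $\tau(Y_2)$ to be a positive integer at least the number of projections used. The obstacle is that the count in Lemma~\ref{lem2.13:1+ mu X posi line comb K depe mu} is of size $\sim28/\mu$, which need not be $\le\tau(X)$. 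To overcome this I would first pass to $R_X\M R_X$, where $\tau(X)>1$ strictly (indeed $\tau(X)-\tau(R_X)=\mu\tau(E)-\bigl(\tau(\bigvee_iF_i)-(1-\lambda)S\bigr)\ge\mu\tau(E)-\lambda S>0$ by $\tau(\bigvee_iF_i)\le S$ and (2)), and then make the tail decomposition quantitative: in the construction inside Lemma~\ref{lem2.13:1+ mu X posi line comb K depe mu} the projections $q_\pm$ are supported on a projection of trace at most $2\tau(\bigvee_{i>N}F_i)$, which is arbitrarily small for $N$ deep, while the locally invertible remainder $A\ge\mu E_2$ lies under $E_2$; balancing $\tau(E_2)$ against the strict gap $\mu\tau(E)-\lambda S$ (and $N$ against the tail) is meant to force $\sum_l\tau(H_l)\le\tau(Y_2)$. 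This bookkeeping — reconciling the projection count with $\tau(Y_2)$ by exploiting the strict inequality in (2) — is the delicate heart of the argument; if the crude estimate is not enough, the fallback is to peel off from the excess a summand $(1+\mu)E_0$ of trace $\tau(X)-1$ (a finite sum by Lemma~\ref{lem1: lemma 6.1}), reducing to an operator of trace $1$ and the same shape, and to treat that case via the averaging characterisation of Lemma~\ref{lem2.2:TFAE}. Once $Y_2$ is known to be a finite sum of projections, so is $X=Y_1+Y_2$.
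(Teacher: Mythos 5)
Your set-up coincides with the paper's: split $E=E_1+E_2$ with $\tau(E_2)$ small, treat the head $(1+\mu)E_1+(1-\lambda)\sum_{i\le N}F_i$ by Lemma~\ref{lem2.7: finite non orth proj finite sum}, and use Lemma~\ref{lem2.13:1+ mu X posi line comb K depe mu} to write the tail $Y_2=(1+\mu)E_2+(1-\lambda)\sum_{i>N}F_i$ as a positive combination $\sum_{l=1}^K\alpha_lH_l$ with $K\le 15+\lceil 28/\mu\rceil$. But the step you yourself flag as the crux is a genuine gap, and neither of your two suggested fixes works. The inequality $\sum_{l}\tau(H_l)\le\tau(Y_2)$ that you hope to force cannot be forced: every projection appearing in the construction has trace at most about $2\tau(E_2)+\tau(\bigvee_{i>N}F_i)$, but there are $K\sim 28/\mu$ of them, while $\tau(Y_2)$ is itself of order $\tau(E_2)$; so the two sides differ by a factor of roughly $K$ no matter how deep you cut the tail or how you exploit the strict gap in (2) -- making $\tau(E_2)$ smaller shrinks both sides proportionally. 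Your fallback (peel off an excess of trace $\tau(X)-1$ and invoke the trace-one case) is circular in this development: the trace-one statement (Lemma~\ref{lem2.20:tau X=1 X is finite sum}, via Lemmas~\ref{lem2.17: trace= 1+mu finite sum}--\ref{lem2.19: = finite} and Lemma~\ref{lem2.16: > finite sum}) is deduced downstream from the very lemma you are proving, and Lemma~\ref{lem2.2:TFAE} is only an equivalence, not a proof of that case.

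The missing idea is that one should not try to make the tail itself a finite sum of projections. The paper instead dilates: with $Z:=(1+\mu)E_2+(1-\lambda)\sum_{i>N}F_i$ and $Q:=E_2\vee(\bigvee_{i>N}F_i)$, the normalization $s:=\tau_Q(Z)\ge 1$ makes $\tau_Q(Z/s)=1$, so Lemma~\ref{lem2.8:KX finite sum} gives $KZ/s=\sum_i p_i$ with $p_i$ projections, hence $KZ=\sum_i sp_i$, and each $sp_i$ (with $s\ge 1$) is a finite sum of projections by Lemma~\ref{lem1: lemma 6.1}; thus $KZ=\sum_{i=1}^m f_i$ is a finite sum of projections. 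One then writes $Z=\frac1K\sum_i f_i=\sum_i\bigl(1-(1-\tfrac1K)\bigr)f_i$ and feeds these $m$ terms, as additional defect summands with $\lambda_j=1-\tfrac1K$, together with $(1-\lambda)\sum_{i\le N}F_i$, into a single application of Lemma~\ref{lem2.7: finite non orth proj finite sum} against the excess $\mu\tau(E_1)$. The quantitative choice $0<\tau(E_2)<\varepsilon/\bigl(3(2+\mu)K\bigr)$ (with $\varepsilon=\mu\tau(E)-\lambda\sum_i\tau(F_i)$) guarantees $\sum_i\tau(f_i)=\tau(KZ)<\varepsilon/3$, so the trace budget $\mu\tau(E_1)-\lambda\sum_{i\le N}\tau(F_i)-(1-\tfrac1K)\sum_i\tau(f_i)>0$ closes. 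Without this ``multiply by $K$, then reinsert with coefficient $1/K$ as defects'' device, your argument does not reach the conclusion.
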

\begin{proof}
Set $\varepsilon:=\mu\tau(E)-\lambda\sum_{i=1}^\infty \tau(F_i)$, then $\varepsilon > 0$ by (2).
Since $\M$ is of type $\rm{II}_1$ factor, we can decompose $E=E_1+E_2$ into the sum of two mutually orthogonal projections such that
\[
0<\tau(E_2)<\frac{\varepsilon}{3(2+\mu)K},
\]
where $K=15+\lceil \frac{28}{\mu}\rceil$. As $\sum_{i=1}^\infty\tau(F_i)<+\infty$, then for $\tau(E_2)$, there exists an $N\in\NNN$ such that
\[
\tau(E_2)\geq \sum_{j=N+1}^\infty\tau(F_j) \ \ \text{and} \ \  \mu\tau(E_2)> \lambda \sum_{j=N+1}^\infty\tau(F_j).
\]
Set $Z:=(1+\mu)E_2+(1-\lambda)\sum_{j=N+1}^\infty F_j$ and $Q:=E_2\vee(\bigvee_{j=N+1}^\infty F_j)$. Then by above inequalities,
\[
s:=\tau_Q(Z)=\frac{\tau(E_2)+\sum_{j=N+1}^\infty \tau(F_j)+\mu\tau(E_2)-\lambda \sum_{j=N+1}^\infty\tau(F_j)}{\tau(Q)} \geq 1.
\]
Thus by Lemma \ref{lem2.13:1+ mu X posi line comb K depe mu}, $Z$ can be expressed as a positive linear combination of no more than $K$ projections and so is $\frac{Z}{s}$. Set
$\frac{Z}{s}=\sum_{j=1}^K\alpha_j q_j$ for some projections $q_j\in \M$ and positive scalars $\alpha_j$. Notice that $\tau_Q(\frac{Z}{s})=1$. Then by Lemma \ref{lem2.8:KX finite sum},
$\frac{KZ}{s}=\sum_{i=1}^n p_i$ for some projections $p_i \in \M$. It implies $KZ=\sum_{i=1}^n sp_i$.
If $s>1$, then by Lemma \ref{lem1: lemma 6.1}, $sp_i$ is a finite sum of projections for every $i$; If $s=1$, then $KZ$ is already a finite sum of projections. So without loss of generality,
we can assume that $KZ=\sum_{i=1}^m f_i$ for some projections $f_i \in \M$.
Thus
\[
\begin{aligned}
X  &=(1+\mu)E_1+(1-\lambda)\sum_{i=1}^N F_i+Z \\
   &=(1+\mu)E_1+(1-\lambda)\sum_{i=1}^N F_i+\frac{1}{K}\sum_{i=1}^m f_i \\
   &=(1+\mu)E_1+(1-\lambda)\sum_{i=1}^N F_i+\left(1-(1-\frac{1}{K})\right)\sum_{i=1}^m f_i.
\end{aligned}
\]
Next we will show that
\[
\mu\tau(E_1)-\lambda\sum_{i=1}^N\tau(F_i)-\left(1-\frac{1}{K}\right)\sum_{i=1}^m \tau(f_i)>0.
\]
For if the above inequality is true, then by Lemma \ref{lem2.7: finite non orth proj finite sum}, $X$ is a finite sum of projections in $\M$.

As
\[
\begin{aligned}
\sum_{i=1}^m \tau(f_i)
& =\tau(KZ)=K\left((1+\mu)\tau(E_2)+(1-\lambda)\sum_{j=N+1}^\infty \tau(F_j)\right)\\
& \leq K\left((1+\mu)\tau(E_2)+\sum_{j=N+1}^\infty \tau(F_j)\right)\\
& \leq K\left((1+\mu)\tau(E_2)+\tau(E_2)\right)<\frac{\varepsilon}{3},
\end{aligned}
\]
thus
\[
\begin{aligned}
&  \mu\tau(E_1)-\lambda\sum_{i=1}^N\tau(F_i)-\left(1-\frac{1}{K}\right)\sum_{i=1}^m \tau(f_i)\\
& >\mu\tau(E)-\mu\tau(E_2)-\lambda\sum_{i=1}^\infty \tau(F_i)-\sum_{i=1}^m \tau(f_i) \\
& >\mu\tau(E)-\lambda\sum_{i=1}^\infty \tau(F_i) -\mu\tau(E_2) -\frac{\varepsilon}{3}\\
& >\varepsilon -\frac{\mu\varepsilon}{3(2+\mu)K}- \frac{\varepsilon}{3} >\frac{\varepsilon}{3}>0.
\end{aligned}
\]
This completes the proof.

\end{proof}

The proof of the following lemma is similar to the proof of  Lemma~\ref{lem2.14:1+mu E finite sum}, however, there exist some differences. So we give a complete proof.

\begin{lem}\label{lem2.15:1- lambda F finite sum}
Let $X=(1-\lambda)F+(1+\mu)\sum_{i=1}^\infty E_i$, where $\mu>0$, $0< \lambda \leq 1$, and $F$, $\{E_i\}_{i=1}^\infty$ are nonzero projections in $(\M,\tau)$. If
$F$, $\{E_i\}_{i=1}^\infty$ satisfy the following conditions
\begin{enumerate}
\item $F \bot (\bigvee_{i=1}^\infty E_i);$
\item $\lambda \tau(F) < \mu\sum_{i=1}^\infty \tau(E_i);$
\item $\forall N\in \NNN, \bigvee_{i=N+1}^\infty E_i \leq (1+\mu)\sum_{i=N+1}^\infty E_i,$
\end{enumerate}
then $X$ is a finite sum of projections in $\M$.
\end{lem}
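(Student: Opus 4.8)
The plan is to mirror the proof of Lemma~\ref{lem2.14:1+mu E finite sum}, but now the roles of the ``excess'' and ``defect'' pieces are reversed: the operator has one defect projection $(1-\lambda)F$ and infinitely many excess projections $(1+\mu)\sum E_i$, with strict inequality $\lambda\tau(F)<\mu\sum\tau(E_i)$. Set $\varepsilon:=\mu\sum_{i=1}^\infty\tau(E_i)-\lambda\tau(F)>0$. The key difference from the previous lemma is that the ``small remainder'' we need to peel off should come from the \emph{tail} of the $E_i$'s rather than from splitting a single projection: since $\sum\tau(E_i)<+\infty$, for any prescribed $\delta>0$ there is an $N$ with $\sum_{i=N+1}^\infty\tau(E_i)<\delta$, and hypothesis (3) guarantees $(1+\mu)\sum_{i=N+1}^\infty E_i\geq\bigvee_{i=N+1}^\infty E_i$. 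I would choose $N$ large enough that $\sum_{i=N+1}^\infty\tau(E_i)<\frac{\varepsilon}{3(2+\mu)K}$, where $K=15+\lceil\frac{28}{\mu}\rceil$.

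Next, I would isolate the tail piece $Z:=(1+\mu)\sum_{i=N+1}^\infty E_i$ together with an auxiliary defect projection. Because we are now in the situation with \emph{many} excess projections and we want to feed Lemma~\ref{lem2.13:1+ mu X posi line comb K depe mu}, which is stated for one excess projection $E$ and many defect projections $F_i$, I would instead borrow a small defect projection $F^\prime\leq F$ with $\lambda\tau(F^\prime)$ comparable to $\mu\sum_{i=N+1}^\infty\tau(E_i)$ — but this does not immediately match the hypothesis shape either. A cleaner route: observe that by hypothesis (3) the tail $Z$ is already locally invertible, $Z\geq\bigvee_{i=N+1}^\infty E_i=R_Z$, in fact $Z\geq(1+\mu)$ is not right but $Z\geq R_Z$ suffices; actually $Z$ restricted to its range support satisfies $Z\geq R_Z$ with $\|Z\|\leq\|\bigvee E_i\|(1+\mu)\leq 1+\mu$ is false as well — more carefully, $\|Z\|\leq 1+\mu$ need not hold, so I would bound $\|Z\|$ using that $Z\leq(1+\mu)\sum_{i=N+1}^\infty E_i$ and this is dominated by $(1+\mu)\cdot(\text{something})$; in any case the correct observation is $R_Z\leq Z$, hence by Corollary~\ref{cor1: Corollary 2.5} (with $\nu=1$) $Z$ is a positive combination of at most $N_0+\lceil V_0(\|Z\|-1)\rceil+1$ projections. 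Since $\tau(Z)$ is small but we need $\tau_{R_Z}(Z)\geq 1$ to apply Lemma~\ref{lem2.8:KX finite sum}, and indeed $Z\geq R_Z$ gives $\tau_{R_Z}(Z)\geq 1$; so $KZ$ is a finite sum of projections, $KZ=\sum f_i$, after using Lemma~\ref{lem1: lemma 6.1} to absorb the scalar $s=\tau_{R_Z}(Z)\geq 1$ exactly as in the previous proof.

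Finally I would write
\[
X=(1-\lambda)F+(1+\mu)\sum_{i=1}^N E_i+Z
 =(1-\lambda)F+(1+\mu)\sum_{i=1}^N E_i+\tfrac{1}{K}\sum_i f_i,
\]
and rewrite the last term as $\bigl(1-(1-\tfrac1K)\bigr)\sum_i f_i$. Then the finite-sum conclusion follows from Lemma~\ref{lem2.7: finite non orth proj finite sum} once I verify the trace inequality
\[
\mu\sum_{i=1}^N\tau(E_i)-\lambda\tau(F)-\Bigl(1-\tfrac1K\Bigr)\sum_i\tau(f_i)>0.
\]
Since $\sum_i\tau(f_i)=\tau(KZ)=K(1+\mu)\sum_{i=N+1}^\infty\tau(E_i)<\frac{\varepsilon}{3}$ by the choice of $N$, and $\mu\sum_{i=1}^N\tau(E_i)-\lambda\tau(F)=\varepsilon-\mu\sum_{i=N+1}^\infty\tau(E_i)>\varepsilon-\frac{\varepsilon}{3(2+\mu)}$, the left side exceeds $\varepsilon-\frac{\varepsilon}{3(2+\mu)}-\frac{\varepsilon}{3}>0$, completing the proof. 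The main obstacle I anticipate is matching the tail $Z$ to the hypotheses of an earlier peeling lemma: here hypothesis (3) is exactly what makes $Z$ locally invertible on its support with constant $\nu=1$, so Corollary~\ref{cor1: Corollary 2.5} (rather than Lemma~\ref{lem2.13:1+ mu X posi line comb K depe mu}, whose hypothesis shape is the ``one excess, many defect'' one) is the right tool, and I would need to double-check the norm bound $\|Z\|$ entering the count $K$ — but since $Z\leq x_1$ type bounds give $\|Z\|\leq 1+\mu$ whenever the $E_i$ arise as in Remark~\ref{rem2.10: x geq 0 constru}, the constant $K=15+\lceil\frac{28}{\mu}\rceil$ is the same as before.
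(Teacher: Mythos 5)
Your final route is the same as the paper's: peel off the tail $Z=(1+\mu)\sum_{i=N+1}^\infty E_i$, observe that hypothesis (3) gives $Z\ge \bigvee_{i>N}E_i=R_Z$ so that Corollary~\ref{cor1: Corollary 2.5} (with $\nu=1$) applies rather than Lemma~\ref{lem2.13:1+ mu X posi line comb K depe mu}, normalize by $s=\tau_{R_Z}(Z)\ge 1$, use Lemma~\ref{lem2.8:KX finite sum} and Lemma~\ref{lem1: lemma 6.1} to write $KZ$ as a finite sum of projections, and finish with Lemma~\ref{lem2.7: finite non orth proj finite sum} after an $\varepsilon/3$ trace estimate; that is exactly the published proof. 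The one genuine flaw is your constant $K$. Corollary~\ref{cor1: Corollary 2.5} with $\nu=1$ yields a positive combination of $N_0+\lceil V_0(\|Z\|-1)\rceil+1$ projections, and this count must be fixed \emph{before} you choose $N$, since your choice of $N$ (and hence the bound $\sum_i\tau(f_i)=K(1+\mu)\sum_{i>N}\tau(E_i)<\varepsilon/3$) is made in terms of it. You take $K=15+\lceil\frac{28}{\mu}\rceil$ and defend it with the bound $\|Z\|\le 1+\mu$; but the hypotheses of the lemma only bound the tail from below, not from above, and even in the intended application (Remark~\ref{rem2.10: x geq 0 constru}) one has only $(1+\mu)\sum_{i>N}p_i\le x_1$, whose norm is $\|x\|$ and can be far larger than $1+\mu$ while $\mu$ stays small. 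Moreover the lemma has to be proved as stated, not only for projections arising from that remark. So as written the number of projections produced by Corollary~\ref{cor1: Corollary 2.5} may exceed your $K$; then the multiplier needed in Lemma~\ref{lem2.8:KX finite sum} is larger, $\sum_i\tau(f_i)$ is correspondingly larger, and your $\varepsilon/3$ estimate is no longer guaranteed.

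The repair is exactly the paper's: set $K:=N_0+\lceil V_0\bigl(\|(1+\mu)\sum_{i=1}^\infty E_i\|-1\bigr)\rceil+1$ (finite since $X\in\M$), note that $(1+\mu)\sum_{i=N+1}^\infty E_i\le(1+\mu)\sum_{i=1}^\infty E_i$ makes the count for every tail at most this fixed $K$, and only then choose $N$ with $\sum_{i=N+1}^\infty\tau(E_i)<\frac{\varepsilon}{3(1+\mu)K}$. With that substitution (and dropping the appeal to $\|Z\|\le 1+\mu$), the rest of your argument, including the final inequality fed into Lemma~\ref{lem2.7: finite non orth proj finite sum}, goes through verbatim.
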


\begin{proof}
By (3), we have $(1+\mu)\sum_{i=N+1}^\infty E_i$ is locally invertible for every $N$(i.e., invertible in the algebra $(\bigvee_{i=N+1}^\infty E_i)\M(\bigvee_{i=N+1}^\infty E_i)$), thus
$(1+\mu)\sum_{i=N+1}^\infty E_i$ is positive combination of no more than $K_{N+1}:=N_0+ \lceil V_0(\|(1+\mu)\sum_{i=N+1}^\infty E_i\|-1)\rceil+1$ projections, where $N_0=12, V_0=14$, by
Corollary \ref{cor1: Corollary 2.5}. Set $K:=N_0+ \lceil V_0(\|(1+\mu)\sum_{i=1}^\infty E_i\|-1)\rceil+1$. As $(1+\mu)\sum_{i=N+1}^\infty E_i \leq (1+\mu)\sum_{i=1}^\infty E_i$, we have
$K_{N+1} \leq K$ for every $N$. So   $(1+\mu)\sum_{i=N+1}^\infty E_i$  can be expressed as a positive combination of no more than $K$ projections for every $N$.

Set $\varepsilon:=\mu\sum_{i=1}^\infty \tau(E_i)-\lambda \tau(F)$, then $\varepsilon > 0$ by (2). As $\sum_{i=1}^\infty\tau(E_i)<+\infty$, then there exists an $N\in\NNN$ such that
$\sum_{i=N+1}^\infty \tau(E_i)<\frac{\varepsilon}{3(1+\mu)K}$. Set $E:=\bigvee_{i=N+1}^\infty E_i$ and $Z:=(1+\mu)\sum_{i=N+1}^\infty E_i$. Then by (3), we have
$s:=\tau_E(Z)=(1+\mu)\sum_{i=N+1}^\infty \tau_E(E_i)\geq \tau_E(E)=1.$ Therefore $\tau_E(\frac{Z}{s})=1$. Then by Lemma \ref{lem2.8:KX finite sum}, $\frac{KZ}{s}=\sum_{i=1}^n p_i$ for some
projections $p_i \in \M$. It implies $KZ=\sum_{i=1}^n sp_i$. If $s>1$, then by Lemma \ref{lem1: lemma 6.1}, $sp_i$ is a finite sum of projections for every $i$. If $s=1$, then $KZ$ is
already
a finite sum of projections. So without loss of generality, we can assume that $KZ=\sum_{i=1}^m f_i$ for some projections $f_i \in \M$.
Thus
\[
\begin{aligned}
X  &=(1-\lambda)F+(1+\mu)\sum_{i=1}^N E_i+Z \\
   &=(1-\lambda)F+(1+\mu)\sum_{i=1}^N E_i+\frac{1}{K}\sum_{i=1}^m f_i \\
   &=(1-\lambda)F+(1+\mu)\sum_{i=1}^N E_i+\left(1-(1-\frac{1}{K})\right)\sum_{i=1}^m f_i.
\end{aligned}
\]
Next we will show that
\[
\mu\sum_{i=1}^N\tau(E_i)-\lambda\tau(F)-\left(1-\frac{1}{K}\right)\sum_{i=1}^m \tau(f_i)>0.
\]
For if the above inequality is true, then by Lemma \ref{lem2.7: finite non orth proj finite sum}, $X$ is a finite sum of projections in $\M$.

Since
\[
\sum_{i=1}^m \tau(f_i)
 =\tau(KZ)=K\left((1+\mu)\sum_{i=N+1}^\infty \tau(E_i)\right) <\frac{\varepsilon}{3},
\]
we have
\[
\begin{aligned}
&  \mu\sum_{i=1}^N\tau(E_i)-\lambda\tau(F)-\left(1-\frac{1}{K}\right)\sum_{i=1}^m \tau(f_i)\\
& > \mu\sum_{i=1}^\infty \tau(E_i)-\lambda\tau(F)-\mu\sum_{i=N+1}^\infty \tau(E_i)-\frac{\varepsilon}{3}\\
& > \varepsilon -\frac{\mu\varepsilon}{3(1+\mu)K}- \frac{\varepsilon}{3} >\frac{\varepsilon}{3}>0.
\end{aligned}
\]
This completes the proof.
\end{proof}

\begin{lem} \label{lem2.16: > finite sum}
Let $X\in \M^+$. If $\tau(X)>1$, then $X$ is a finite sum of projections in $\M$.
\end{lem}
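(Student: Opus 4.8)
The plan is to reduce the statement, via the excess/defect analysis already set up in Remark~\ref{rem2.10': x geq 0 constru}, to a statement about two ``clouds'' of projections, and then to dismantle those clouds using Lemmas~\ref{lem2.14:1+mu E finite sum} and~\ref{lem2.15:1- lambda F finite sum}. First I would make some harmless reductions: replacing $\M$ by $R_X\M R_X$ we may assume $R_X = I$; writing $X_1 = X\chi_X[1,+\infty)$, $X_2 = X\chi_X[0,1)$, $p = \chi_X[1,+\infty)$, $q = \chi_X[0,1)$, one checks that $X_1 \neq 0$ and that the number $\mu \geq 0$ determined by $\tau_p(X_1) = 1+\mu$ is in fact strictly positive (in either failure case $X \leq I$, forcing $\tau(X)\leq 1$). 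If $X_2 = 0$ then $X = X_1 \geq p = I$ is invertible with $\tau(X) > 1$; I postpone this case. Otherwise $0<\lambda<1$, where $\tau_q(X_2)=1-\lambda$, and Remark~\ref{rem2.10': x geq 0 constru} (Case~2) provides nonzero projections $\{p_i\}$, $\{q_j\}$ with $p = \bigvee_i p_i \bot q = \bigvee_j q_j$,
\[
X = (1+\mu)\sum_{i=1}^\infty p_i + (1-\lambda)\sum_{j=1}^\infty q_j, \qquad \mu\sum_{i=1}^\infty \tau(p_i) > \lambda\sum_{j=1}^\infty \tau(q_j),
\]
together with the structural facts of Lemmas~\ref{q_j property}--\ref{p_i,q_j property}. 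I would also record, by the same one-line arguments that prove Lemmas~\ref{q_j property} and~\ref{p_i property}, that $(1+\mu)\sum_{i\in T}p_i \geq \bigvee_{i\in T}p_i$ and $(1-\lambda)\sum_{j\in S}q_j \leq \bigvee_{j\in S}q_j$ for \emph{every} subset $T$ of the $p$-indices and $S$ of the $q$-indices; this is what lets me reshuffle the clouds freely while keeping the hypotheses of Lemmas~\ref{lem2.14:1+mu E finite sum} and~\ref{lem2.15:1- lambda F finite sum} available.

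The heart of the argument is to absorb the whole $q$-cloud into the $p$-cloud. Since $\sum_j\tau(q_j)<\infty$, I would first fix $M$ with $\lambda\sum_{j>M}\tau(q_j)$ smaller than a prescribed small fraction of $\mu\tau(p_1)$, peel off a subprojection $p_1'\le p_1$ with $\mu\tau(p_1')$ equal to that fraction, and apply Lemma~\ref{lem2.14:1+mu E finite sum} with $E = p_1'$ and $\{F_i\} = \{q_j\}_{j>M}$ (hypotheses (1)--(3) holding by $p\bot q$, by the choice of $M$, and by Lemma~\ref{q_j property}); this extracts a finite sum of projections and leaves a residual operator of the form $(1+\mu)\sum_{i\in T_0}p_i + (1-\lambda)\sum_{j=1}^M q_j$ with only \emph{finitely many} $(1-\lambda)$-terms, still obeying the strict trace inequality because the overshoot absorbed into $p_1'$ was kept small. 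I would then peel the surviving $q_1,\dots,q_M$ off one at a time: for each $q_j$ choose a subset $S_j$ of the currently unused $p$-indices with $\mu\sum_{i\in S_j}\tau(p_i)$ exceeding $\lambda\tau(q_j)$ by only a controlled amount and apply Lemma~\ref{lem2.15:1- lambda F finite sum} with $F=q_j$, $\{E_i\}=\{p_i\}_{i\in S_j}$, whose tail hypothesis is exactly the ``every subset'' remark above; Lemma~\ref{lem2.7: finite non orth proj finite sum} keeps the running trace inequality honest. After $M$ such steps nothing of the $q$-cloud remains.

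What is left is a ``pure'' piece $Y:=(1+\mu)\sum_{i\in T_\infty}p_i$ of the $p$-cloud with $T_\infty$ cofinite; by Lemma~\ref{p_i property} and Lemma~\ref{p_i,q_j property}(3), $Y \geq R:=\bigvee_{i\in T_\infty}p_i$ and $\tau_R(Y)=1+\mu>1$, so in $R\M R$ the operator $Y$ is precisely an instance of the postponed $X_2=0$ case: a locally invertible positive operator whose trace strictly exceeds the trace of its support. That case I would dispatch separately: being invertible in its support algebra with $Y\ge R$, $Y$ is a positive combination $\sum_l\beta_l Q_l$ of finitely many projections by Corollary~\ref{cor1: Corollary 2.5}; separating the terms with $\beta_l\ge 1$ from those with $\beta_l<1$ exhibits $Y$ in the form required by Lemma~\ref{lem2.7: finite non orth proj finite sum}, the sign condition there amounting to $\tau(Y)\ge\sum_l\tau(Q_l)$, which one secures after replacing $Y$ by $Y-\varepsilon R$ for small $\varepsilon>0$ and choosing the positive combination of $Y-\varepsilon R$ economically, or by feeding $Y-\varepsilon R$ through Lemma~\ref{lem2.8:KX finite sum}. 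Collecting the finitely many finite sums of projections produced at each stage writes $X$ as a finite sum of projections.

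The step I expect to be the real obstacle is the trace bookkeeping in the cloud-dismantling: one is not permitted to subdivide the individual $p_i$ (subprojections of a $p_i$ need not again be of the special form used in Lemma~\ref{p_i property}, so that lemma and Lemma~\ref{q_j property} would fail for them), so the ``trace budget'' carried by the $p$-cloud must be apportioned among the finitely many surviving $q_j$ in such a way that a tail with $\tau_R>1$ is still left over, in every configuration of the numbers $\tau(p_i)$; making this allocation work uniformly — and then cleanly closing the purely locally invertible case — is where the difficulty concentrates.
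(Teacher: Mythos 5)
Your reduction to the two-cloud form, and your observation that Lemmas \ref{q_j property}, \ref{p_i property} and \ref{p_i,q_j property} extend by the same one-line arguments to arbitrary index subsets, are correct, and the first extraction (a small subprojection $p_1'\le p_1$ against the tail $\{q_j\}_{j>M}$ via Lemma \ref{lem2.14:1+mu E finite sum}) is fine. But the two steps you yourself flag as delicate are genuine gaps, not mere friction. First, the allocation step fails in general: since you are not allowed to subdivide the $p_i$, the amounts available to serve a given $q_j$ are subset sums of the fixed numbers $\mu\tau(p_i)$, and such subset sums need not approach the target $\lambda\tau(q_j)$ from above with small excess. For instance, if $\mu\tau(p_1)$ is large while $\sum_{i\ge 2}\mu\tau(p_i)<\lambda\tau(q_1)$, then every admissible $S_1$ either misses the target (if $p_1\notin S_1$) or overshoots by at least $\mu\tau(p_1)-\lambda\tau(q_1)$, which need not be small; so ``exceeding $\lambda\tau(q_j)$ by only a controlled amount'' is sometimes impossible, and without that control you cannot guarantee a usable tail survives. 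Second, your closure of the pure piece $Y=(1+\mu)\sum_{i\in T_\infty}p_i$ (equivalently of the postponed case $X\ge I$) does not go through with the cited tools: Corollary \ref{cor1: Corollary 2.5} gives a positive combination $\sum_l\beta_l Q_l$ with no control whatsoever on $\sum_l\tau(Q_l)$, so the sign condition of Lemma \ref{lem2.7: finite non orth proj finite sum} is not secured; and the route through Lemma \ref{lem2.8:KX finite sum} writes $KY/s$ (with $s=\tau_R(Y)$) as a sum of projections $f_i$, hence $Y=\sum_i\frac{s}{K}f_i$ with $\frac{s}{K}<1$, i.e.\ a pure deficit of total size $\left(\frac{K}{s}-1\right)\tau(Y)$ which must be absorbed by surplus from elsewhere --- and at that final stage you have none left. (This is precisely why, inside the proofs of Lemmas \ref{lem2.14:1+mu E finite sum} and \ref{lem2.15:1- lambda F finite sum}, the piece that receives the $K$-trick is first arranged to have trace so small that the deficit it creates stays below $\varepsilon/3$.)

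The paper's own proof shows both obstacles are avoidable, because it never tries to absorb the $q$-cloud one $q_j$ at a time and never leaves a pure locally invertible remainder. Keeping $\varepsilon:=\mu\sum_i\tau(p_i)-\lambda\sum_j\tau(q_j)>0$, it pairs the single projection $E_1=p_1$ against a $q$-tail and the single projection $F_1=q_1$ against a $p$-tail, splitting one $F$ if necessary so that each pairing consumes less than $\varepsilon/3$ of the surplus; these two infinite pieces are finished by Lemmas \ref{lem2.14:1+mu E finite sum} and \ref{lem2.15:1- lambda F finite sum}, and what remains is the finite family $(1+\mu)\sum_{i=2}^{M}p_i+(1-\lambda)\sum_{j=2}^{N}q_j$ with surplus exceeding $\varepsilon/3$, which Lemma \ref{lem2.7: finite non orth proj finite sum} handles directly --- for finite families one may split projections freely, so no subset-sum allocation problem and no leftover of the form $(1+\mu)\sum_{i\in T_\infty}p_i$ ever arises. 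If you want to salvage your scheme, you would have to either prove an ``economical'' version of Corollary \ref{cor1: Corollary 2.5} or reorganize so that every piece treated by the $K$-trick has prescribed small trace; the paper's three-piece split is the cleaner fix.
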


\begin{proof}
By Remark~\ref{rem2.10: x geq 0 constru}, Remark~\ref{rem2.10': x geq 0 constru}, Lemma~\ref{q_j property}, Lemma~\ref{p_i property} and Lemma~\ref{p_i,q_j property},
 $X=(1+\mu)\sum_{i=1}^\infty E_i+\sum_{j=1}^\infty(1-\lambda)F_j$, where $\mu > 0$, $0< \lambda \leq 1$, and $E_i,F_j$ are nonzero projections in
$(\M,\tau)$ such that
\begin{enumerate}
\item $(\bigvee_{i=1}^\infty E_i) \bot (\bigvee_{j=1}^\infty F_j);$
\item $\mu\sum_{i=1}^\infty \tau(E_i) > \lambda\sum_{j=1}^\infty \tau(F_j);$
\item $\forall N\in \NNN, \bigvee_{i=N+1}^\infty E_i \leq (1+\mu)\sum_{i=N+1}^\infty E_i;$
\item $\forall N\in \NNN, (1-\lambda)\sum_{j=N+1}^\infty F_j\leq \bigvee_{j=N+1}^\infty F_j.$
\end{enumerate}

Set
\[
\varepsilon:=\mu\sum_{i=1}^\infty \tau(E_i)-\lambda \sum_{j=1}^\infty \tau(F_j),
\]
thus $\varepsilon >0$ by (2). Since $\sum_{j=1}^\infty \tau(F_j) < +\infty$, there is a smallest integer $n_1$ for which $0<\mu \tau(E_1)-\lambda \sum_{j=n_1}^\infty \tau(F_j)$. If
$\mu\tau(E_1)-\lambda\sum_{j=n_1}^\infty \tau(F_j) \geq \frac{\varepsilon}{3}$, then we can decompose $F_{n_1-1}=F_{n_1-1}^{\prime}+F_{n_1-1}^{\prime \prime}$ into the sum of two mutually
orthogonal projections with $0<\mu\tau(E_1)-\lambda\sum_{j=n_1}^\infty \tau(F_j)-\lambda \tau(F_{n_1-1}^{\prime}) < \frac{\varepsilon}{3}$. Then without loss of generality, we may assume
that there exists $N,M \in \NNN$ large enough such that
\[
0<\mu\tau(E_1)-\lambda\sum_{j=N+1}^\infty \tau(F_j)<\frac{\varepsilon}{3} \ \ \text{and} \ \ 0<\mu\sum_{i=M+1}^\infty \tau(E_i)-\lambda \tau(F_1)<\frac{\varepsilon}{3}.
\]
Set
\[X_1:=(1+\mu)E_1+(1-\lambda)\sum_{j=N+1}^\infty F_j;\]
\[X_2:=(1-\lambda)F_1+(1+\mu) \sum_{i=M+1}^\infty E_i;\]
\[ X_3:= (1+\mu)\sum_{i=2}^M E_i + (1-\lambda) \sum_{j=2}^N F_j.
\]
Then $X=X_1+X_2+X_3$. By Lemma \ref{lem2.14:1+mu E finite sum} and Lemma \ref{lem2.15:1- lambda F finite sum}, $X_1$ and $X_2$ are finite sums of projections. Now, we consider $X_3$. Since
\[
\begin{aligned}
& \mu \sum_{i=2}^M \tau(E_i) - \lambda\sum_{j=2}^N \tau(F_j)\\
& =\left(\mu \sum_{i=1}^\infty \tau(E_i) - \lambda \sum_{j=1}^\infty \tau(F_j)\right)-\left(\mu \tau(E_1) - \lambda\sum_{j=N+1}^\infty \tau(F_j)\right)-\left(\mu \sum_{i=M+1}^\infty
\tau(E_i) - \lambda \tau(F_1)\right)\\
& >\varepsilon -\frac{\varepsilon}{3}-\frac{\varepsilon}{3}>0.
\end{aligned}
\]
Hence, by Lemma \ref{lem2.7: finite non orth proj finite sum}, $X_3$ is a finite sum of projections. so $X$ is a finite sum of projections.
\end{proof}

\begin{lem}\label{lem2.17: trace= 1+mu finite sum}
Let $X=(1+\mu)E+(1-\lambda) \sum_{i=1}^\infty F_i$, where $\mu>0$, $0< \lambda \leq 1$, and $E$, $\{F_i\}_{i=1}^\infty$ are nonzero projections in
$(\M,\tau)$. If $E$, $\{F_i\}_{i=1}^\infty$ satisfy the following conditions
\begin{enumerate}
\item $E \bot (\bigvee_{i=1}^\infty F_i);$
\item $\lambda\sum_{i=1}^\infty \tau(F_i) = \mu\tau(E);$
\item  $(1-\lambda)\sum_{i=1}^\infty F_i\leq \bigvee_{i=1}^\infty F_i;$
\item  $\sum_{i=1}^\infty \tau(F_i)=\tau(\bigvee_{i=1}^\infty F_i),$
\end{enumerate}
then $X$ is a finite sum of projections in $\M$.
\end{lem}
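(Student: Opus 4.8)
The plan is to normalize $X$, then peel the infinite family $\{F_i\}$ down to a finite one, leaving a small ``tight tail'' to be absorbed by the matrix construction of Lemma~\ref{lem2.13:1+ mu X posi line comb K depe mu}.

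\emph{Step 1: reduction to $R_X=I$.} With $f:=\bigvee_iF_i$ one has $R_X=E\vee f$, and hypotheses (1), (2), (4) give $\tau(X)=(1+\mu)\tau(E)+(1-\lambda)\sum_i\tau(F_i)=\tau(E)+\sum_i\tau(F_i)=\tau(E)+\tau(f)=\tau(R_X)$. Since a finite sum of projections in the corner $R_X\M R_X$ (again a type ${\rm II}_1$ factor) is a finite sum of projections in $\M$, I would pass to that corner and so assume $E+f=I$ and $\tau(X)=1$; the hypotheses then read $\mu\tau(E)=\lambda\tau(f)$, $\sum_i\tau(F_i)=\tau(f)$, $(1-\lambda)\sum_iF_i\le f$.

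\emph{Step 2: matching $E$ with the $F_i$.} Since $\sum_i\tfrac{\lambda}{\mu}\tau(F_i)=\tfrac{\lambda}{\mu}\tau(f)=\tau(E)$, decompose $E=\sum_{n\ge1}E^{(n)}$ into mutually orthogonal nonzero projections with $\mu\tau(E^{(n)})=\lambda\tau(F_n)$. For each $N$ write $X=Y_N+\widehat X_N$, where
\[
Y_N:=\sum_{n=1}^N\bigl((1+\mu)E^{(n)}+(1-\lambda)F_n\bigr),\qquad
\widehat X_N:=(1+\mu)\widehat E_N+(1-\lambda)\sum_{n>N}F_n,\quad\widehat E_N:=\sum_{n>N}E^{(n)}.
\]
As $\sum_{n\le N}\mu\tau(E^{(n)})=\sum_{n\le N}\lambda\tau(F_n)$, Lemma~\ref{lem2.7: finite non orth proj finite sum} makes $Y_N$ a finite sum of projections. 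Moreover $\widehat X_N$ again satisfies (1)--(4): (1) and (2) are clear, the tail forms of (3) and (4) (namely $(1-\lambda)\sum_{n>N+M}F_n\le\bigvee_{n>N+M}F_n$ and $\sum_{n>N}\tau(F_n)=\tau(\bigvee_{n>N}F_n)$) follow from the given ones exactly as in Lemmas~\ref{q_j property} and~\ref{p_i,q_j property}, and $\tau(\widehat X_N)=\tau(\widehat E_N)+\sum_{n>N}\tau(F_n)\to0$. So it suffices to prove $\widehat X_N$ is a finite sum of projections for one large $N$.

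\emph{Step 3: the tail, and where it gets hard.} Now $\widehat E_N\perp g_N:=\bigvee_{n>N}F_n$ and $\widehat X_N$ is block diagonal along $\widehat E_N\oplus g_N$, equal to $(1+\mu)\widehat E_N$ on the first block and to $b:=(1-\lambda)\sum_{n>N}F_n\le g_N$ on the second. When $\mu\le\lambda$ one has $\tau(g_N)\le\tau(\widehat E_N)$, so I would pick a partial isometry $V$ with $V^*V=g_N$, $VV^*\le\widehat E_N$ and run the construction in the proof of Lemma~\ref{lem2.13:1+ mu X posi line comb K depe mu}: it produces projections $q_\pm$ and a locally invertible $A\ge\mu\widehat E_N$, supported on $\widehat E_N$, with $\widehat X_N=\tfrac{\|b\|}{2}(q_-+q_+)+A$. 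Using $\mu\tau(\widehat E_N)=\lambda\tau(g_N)$ and $\tau(VbV^*)=\tau(b)=(1-\lambda)\tau(g_N)$ one computes $\tau(A)-\tau(\widehat E_N)=(1-\|b\|)\tau(g_N)$; since $\|b\|\le1$ by hypothesis (3), $\tau_{\widehat E_N}(A)>1$ (the borderline $\|b\|=1$ being handled separately), so Lemma~\ref{lem2.16: > finite sum} inside the factor $\widehat E_N\M\widehat E_N$ shows $A$ is a finite sum of projections. The case $\mu>\lambda$ is symmetric, reflecting the unit part of $(1+\mu)\widehat E_N$ into $g_N$. The remaining task — and the step I expect to be the genuine obstacle — is to absorb the two sub-unit-coefficient projections $\tfrac{\|b\|}{2}q_\pm$: the excess $\tau(A)-\tau(\widehat E_N)=(1-\|b\|)\tau(g_N)$ is in general strictly smaller than the defect $(2-\|b\|)\tau(g_N)$ they create, so one cannot simply feed them into Lemma~\ref{lem1: lemma 6.1} against $A$; one must instead cut $q_\pm$ and $A$ along carefully matched sub-projections and close up the trace bookkeeping using precisely the tightness $\sum_i\tau(F_i)=\tau(\bigvee_iF_i)$ and the near-orthogonality of the $F_i$ forced by $(1-\lambda)\sum_iF_i\le\bigvee_iF_i$. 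Granting this last point, $X=Y_N+\widehat X_N$ is a finite sum of projections.
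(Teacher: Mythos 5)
There is a genuine gap, and it sits exactly where you flag it: the absorption of the two scaled projections $\tfrac{\|b\|}{2}q_\pm$ is not a deferrable technicality but the whole content of the lemma, and your framework cannot close it. The peeling in Step 2 buys nothing, because the tail $\widehat X_N$ satisfies the same tight equality $\mu\tau(\widehat E_N)=\lambda\tau(g_N)$ as $X$ itself, hence $\tau(\widehat X_N)=\tau(R_{\widehat X_N})$ with zero slack; the obstruction is the equality, not the size of the tail. After running the $q_\pm$/$A$ construction of Lemma~\ref{lem2.13:1+ mu X posi line comb K depe mu}, each $q_\pm$ has trace $\tau(g_N)$, so the two terms $\tfrac{\|b\|}{2}q_\pm$ carry defect $(2-\|b\|)\tau(g_N)$, while the only excess available is $\tau(A)-\tau(\widehat E_N)=(1-\|b\|)\tau(g_N)$, strictly smaller whenever $g_N\neq 0$ (and zero in the admissible borderline $\|b\|=1$). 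Thus Lemma~\ref{lem2.7: finite non orth proj finite sum} is inapplicable to this splitting, and since in the equality case every subfamily you try to handle separately must itself be exactly trace-balanced, no cutting of $q_\pm$ and $A$ into matched subprojections restores the balance: the aggregate counts are fixed by the splitting you chose. The other tools at hand fare no better: the Lemma~\ref{lem2.8:KX finite sum}-type trick only yields that a multiple $K\widehat X_N/s$ is a finite sum, and in Lemmas~\ref{lem2.14:1+mu E finite sum} and~\ref{lem2.15:1- lambda F finite sum} the factor $1/K$ was absorbed using strict inequality, which is precisely what you no longer have. So the proposal proves nothing beyond what Lemma~\ref{lem2.14:1+mu E finite sum} already gives.

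The paper's proof handles the equality case by a different idea which your argument is missing. After your Step 1 reduction ($E+F=I$ with $F=\bigvee_iF_i$, $\tau(X)=1$), set $Z:=(X-I)/\mu$ and consider $I-Z=\bigl(1+\tfrac1\mu\bigr)F-\tfrac{1-\lambda}{\mu}\sum_iF_i$. Hypothesis (3) gives $I-Z\geq(1-\lambda)\sum_iF_i\geq 0$, the operator is supported in $F\M F$, and hypothesis (4) gives $\tau_F(I-Z)=1+\lambda/\mu>1$; hence by the already-established strict case, Lemma~\ref{lem2.16: > finite sum}, $I-Z$ is a finite sum of, say, $N$ projections. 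Since $\tau(I-Z)=1$, Lemma~\ref{lem2.2:TFAE} produces mutually orthogonal projections $P_1,\dots,P_N$ with $\sum_iP_i=I$ and $P_i(I-Z)P_i=P_i$, i.e.\ $P_iZP_i=0$, hence $P_iXP_i=P_i$ because $X=I+\mu Z$. Then each $X^{1/2}P_i$ is a partial isometry, the operators $Q_i:=X^{1/2}P_iX^{1/2}$ are projections, and $X=\sum_{i=1}^N Q_i$. This reduction of the equality case to the strict case for the auxiliary operator $I-Z$, transported back through the compressions $P_i\mapsto X^{1/2}P_iX^{1/2}$, is the step you would need to supply; without it, your argument only re-proves the strict-inequality case.
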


\begin{proof}
Set $F:=\bigvee_{i=1}^\infty F_i$. By considering $(E+F)\M(E+F)$, we may assume that $E+F=I$. Thus $\tau(X)=1$ by (2) and (4). Set $Z:=\frac{X-I}{\mu}$, then $\tau(Z)=0$ and $\tau(I-Z)=1$.
By (3),
\[
\begin{aligned}
 I-Z &=\left(1+\frac{1}{\mu}\right)F-\left(\frac{1-\lambda}{\mu}\right) \sum_{i=1}^\infty F_i \\
     & \geq \frac{1}{\mu}\left( (1+\mu)(1-\lambda)\sum_{i=1}^\infty F_i -(1-\lambda) \sum_{i=1}^\infty F_i \right)\\
     &= (1-\lambda)\sum_{i=1}^\infty F_i \geq 0
\end{aligned}
\]
and
\[
\begin{aligned}
\tau_F(I-Z) &=\left(1+\frac{1}{\mu}\right)\tau_F(F)-\left(\frac{1-\lambda}{\mu}\right) \sum_{i=1}^\infty \tau_F(F_i) \\
            &  = 1+\frac{1}{\mu}+\frac{\lambda}{\mu} -\frac{1}{\mu}=1+\frac{\lambda}{\mu}>1.
\end{aligned}
\]
Then by Lemma \ref{lem2.16: > finite sum}, $I-Z$ is a finite sum of $N$ projections in $\M$.  So by Lemma \ref{lem2.2:TFAE}, there are
nonzero mutually orthogonal projections $P_1,...,P_N$ such that $\sum_{i=1}^N P_i=I$ and $P_i(I-Z)P_i=P_i $ for all $i=1,2,...,N$, which implies that
$P_iZP_i=0 $ for all $i=1,2,...,N$. Since $X=I+\mu Z$, then we also have $P_iXP_i=P_i(I+\mu Z)P_i=P_i$ for all $i=1,2,...,N$. Notice that
\[
P_i=P_i X^\frac{1}{2} X^\frac{1}{2}P_i=(X^\frac{1}{2}P_i)^*(X^\frac{1}{2}P_i),
\]
for all $i=1,2,...,N$. Thus $X^\frac{1}{2}P_i$ is a partial isometry. So we conclude that $X^\frac{1}{2}P_iX^\frac{1}{2}=Q_i$ is a projection for all $i=1,...,N$. As $\sum_{i=1}^N
P_i=I$, we have $X=Q_1+\cdots+Q_N$.

\end{proof}

\begin{lem}\label{lem2.18: trace = 1- lambda = finite sum}
Let $X=(1-\lambda)F+(1+\mu)\sum_{i=1}^\infty E_i$, where $\mu>0$, $0 < \lambda \leq 1$, and $F$, $\{E_i\}_{i=1}^\infty$ are nonzero projections in $(\M,\tau)$. If $F$, $\{E_i\}_{i=1}^\infty$ satisfy the following conditions
\begin{enumerate}
\item $F \bot (\bigvee_{i=1}^\infty E_i);$
\item $\lambda \tau(F) = \mu\sum_{i=1}^\infty \tau(E_i);$
\item $ \bigvee_{i=1}^\infty E_i \leq (1+\mu)\sum_{i=1}^\infty E_i;$
\item $\sum_{i=1}^\infty \tau(E_i)=\tau(\bigvee_{i=1}^\infty E_i),$
\end{enumerate}
then $X$ is a finite sum of projections in $\M$.
\end{lem}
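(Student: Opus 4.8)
The proof follows the same four-move pattern as the proof of Lemma~\ref{lem2.17: trace= 1+mu finite sum}, but with the excess term $(1+\mu)\sum_i E_i$ and the defect term $(1-\lambda)F$ playing the roles that $(1+\mu)E$ and $(1-\lambda)\sum_i F_i$ played there. Set $E:=\bigvee_{i=1}^\infty E_i$. By condition~(1) the element $X$ is supported on $E+F$, so, replacing $\M$ by the type ${\rm II}_1$ factor $(E+F)\M(E+F)$ with the renormalized trace, we may assume $E+F=I$; it then suffices to prove that $X$ is a finite sum of projections in this corner. Conditions~(2) and~(4) give $\tau(X)=(1-\lambda)\tau(F)+(1+\mu)\sum_i\tau(E_i)=\tau(F)+\tau(E)=1$.

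The key change is the choice of auxiliary element: instead of $(X-I)/\mu$, I would set $Z:=(I-X)/\lambda$, so that $\tau(Z)=0$ and $X=I-\lambda Z$. Using $E+F=I$ one computes
\[
I-Z=\Bigl(1-\frac{1}{\lambda}\Bigr)E+\frac{1+\mu}{\lambda}\sum_{i=1}^\infty E_i=\frac{1}{\lambda}\Bigl((1+\mu)\sum_{i=1}^\infty E_i-(1-\lambda)E\Bigr).
\]
Condition~(3), $E=\bigvee_i E_i\le(1+\mu)\sum_i E_i$, then gives $(1+\mu)\sum_i E_i-(1-\lambda)E\ge\lambda E$, so $I-Z\ge E\ge0$; and since $I-Z$ is supported on $E$, condition~(4) gives $\tau_E(I-Z)=1-\frac1\lambda+\frac{1+\mu}{\lambda}=1+\frac\mu\lambda>1$.

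Now apply Lemma~\ref{lem2.16: > finite sum} inside the type ${\rm II}_1$ factor $E\M E$: $I-Z$ is a finite sum of $N$ projections in $E\M E\subseteq\M$, hence a sum of $N$ nonzero projections in $\M$ with $\tau(I-Z)=1$. By Lemma~\ref{lem2.2:TFAE} there are mutually orthogonal nonzero projections $P_1,\dots,P_N$ with $\sum_{i=1}^N P_i=I$ and $P_i(I-Z)P_i=P_i$, that is $P_iZP_i=0$, for all $i$. Since $X=I-\lambda Z$, this yields $P_iXP_i=P_i$, so that $P_i=(X^{1/2}P_i)^*(X^{1/2}P_i)$, $X^{1/2}P_i$ is a partial isometry, $Q_i:=X^{1/2}P_iX^{1/2}$ is a projection, and $X=\sum_{i=1}^N Q_i$, as desired.

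As in Lemma~\ref{lem2.17: trace= 1+mu finite sum}, I expect the only real subtlety to be the bookkeeping across corners: one must check that $I-Z$ genuinely lies in $E\M E$ so that Lemma~\ref{lem2.16: > finite sum} can be applied with the relative trace $\tau_E$, and one must remember that the concluding application of Lemma~\ref{lem2.2:TFAE} takes place in the ambient corner (whose identity is the $I$ appearing there). The algebraic identity displayed above, the sign estimate (valid since $0<\lambda\le1$ forces $1-1/\lambda\le0$), and the trace computations are all routine.
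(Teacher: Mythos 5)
Your proposal is correct and follows essentially the same route as the paper: reduce to $E+F=I$, note $\tau(X)=1$, form the auxiliary element $\bigl(1-\tfrac{1}{\lambda}\bigr)E+\tfrac{1+\mu}{\lambda}\sum_i E_i$ (your $I-Z$; the paper writes $Z=(X-I)/\lambda$ and works with $I+Z$, the same operator), bound it below by $E$ via condition (3), compute $\tau_E=1+\mu/\lambda>1$, apply Lemma \ref{lem2.16: > finite sum} in the corner and then Lemma \ref{lem2.2:TFAE} to get the $P_i$ with $P_iXP_i=P_i$, and conclude with $Q_i=X^{1/2}P_iX^{1/2}$. Your explicit remark that Lemma \ref{lem2.16: > finite sum} is applied inside $E\M E$ with the relative trace is exactly what the paper's computation of $\tau_E(I+Z)$ implicitly uses.
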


\begin{proof}
Set $E:=\bigvee_{i=1}^\infty E_i$. By considering $(E+F)\M(E+F)$, we may assume that $E+F=I$. Thus $\tau(X)=1$ by (2) and (4). Set $Z:=\frac{X-I}{\lambda}$, then $\tau(Z)=0$ and
$\tau(I+Z)=1$. Notice that by (3)
\[
\begin{aligned}
 I+Z &=\left(1-\frac{1}{\lambda}\right)E+\left(\frac{1+\mu}{\lambda}\right) \sum_{i=1}^\infty E_i \\
     & \geq \left(1-\frac{1}{\lambda}\right)E  + \frac{1}{\lambda}\bigvee_{i=1}^\infty E_i=E\geq 0
\end{aligned}
\]
and
\[
\begin{aligned}
\tau_E(I+Z) &=\left(1-\frac{1}{\lambda}\right)\tau_E(E)+ \left(\frac{1+\mu}{\lambda}\right) \sum_{i=1}^\infty \tau_E(E_i) \\
            & =\left(1-\frac{1}{\lambda}\right)+\left(\frac{1+\mu}{\lambda}\right)\tau_E\left(\bigvee_{i=1}^\infty E_i\right)  \\
            &  = 1-\frac{1}{\lambda}+ \frac{1+\mu}{\lambda}=1+\frac{\mu}{\lambda}  >1.
\end{aligned}
\]
Then by Lemma \ref{lem2.16: > finite sum}, $I+Z$ is a finite sum of $N$ projections in $\M $. So by Lemma \ref{lem2.2:TFAE}, there are
nonzero mutually orthogonal projections $P_1,...,P_N$ such that $\sum_{i=1}^N P_i=I$ and $P_i(I+Z)P_i=P_i $ for all $i=1,2,...,N$, which implies that
$P_iZP_i=0 $ for all $i=1,2,...,N$. Since $X=I+\lambda Z$, then we also have $P_iXP_i=P_i(I+\lambda Z)P_i=P_i$ for all $i=1,2,...,N$. Notice that
\[
P_i=P_i X^\frac{1}{2} X^\frac{1}{2}P_i=(X^\frac{1}{2}P_i)^*(X^\frac{1}{2}P_i),
\]
for all $i=1,2,...,N$. Then $X^\frac{1}{2}P_i$ is a partial isometry. So we conclude that $X^\frac{1}{2}P_iX^\frac{1}{2}=Q_i$ is a projection for all $i=1,...,N$. Since $\sum_{i=1}^N
P_i=I$, we have $X=Q_1+\cdots+Q_N$.
\end{proof}

\begin{lem}\label{lem2.19: = finite}
Let $X=(1+\mu)\sum_{i=1}^\infty E_i+(1-\lambda) \sum_{j=1}^\infty F_j$, where $\mu>0$, $0 < \lambda \leq 1$, and $E_i,F_j$ are nonzero projections in
$(\M,\tau)$. If $E_i,F_j$ satisfy the following conditions
\begin{enumerate}
\item $(\bigvee_{i=1}^\infty E_i) \bot (\bigvee_{j=1}^\infty F_j);$
\item $\mu\sum_{i=1}^\infty \tau(E_i) = \lambda\sum_{j=1}^\infty \tau(F_j);$
\item $\forall N\in \NNN, \bigvee_{i=N+1}^\infty E_i \leq (1+\mu)\sum_{i=N+1}^\infty E_i;$
\item $\forall N\in \NNN, (1-\lambda)\sum_{j=N+1}^\infty F_j\leq \bigvee_{j=N+1}^\infty F_j,$
\end{enumerate}
then $X$ is a finite sum of projections in $\M$.
\end{lem}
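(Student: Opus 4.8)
The plan is to follow the strategy of the proof of Lemma~\ref{lem2.16: > finite sum}, but to replace its strict building blocks by the equality ones, Lemmas~\ref{lem2.17: trace= 1+mu finite sum} and~\ref{lem2.18: trace = 1- lambda = finite sum}. First I would normalize. Put $p:=\bigvee_{i=1}^\infty E_i$ and $q:=\bigvee_{j=1}^\infty F_j$; they are nonzero and, by (1), orthogonal. Since $X=(p+q)X(p+q)$ sits in the corner $(p+q)\M(p+q)$, which is again a type ${\rm II}_1$ factor, since a finite sum of projections there is one in $\M$, and since hypotheses (1)--(4) survive this passage (condition (2) is a ratio of traces, (3)--(4) are operator inequalities, (1) is orthogonality), it suffices to prove the lemma under the extra assumption $p+q=I$. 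With this assumption and using (2),
\[
\tau(X)=(1+\mu)\sum_i\tau(E_i)+(1-\lambda)\sum_j\tau(F_j)=\sum_i\tau(E_i)+\sum_j\tau(F_j)\ge\tau(p)+\tau(q)=1,
\]
because $\sum_i\tau(E_i)\ge\tau(p)$ and $\sum_j\tau(F_j)\ge\tau(q)$; both series converge, since $(1+\mu)\sum_i\tau(E_i)\le\tau(X)<\infty$ and then $\lambda\sum_j\tau(F_j)=\mu\sum_i\tau(E_i)<\infty$ by (2).

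If $\tau(X)>1$, then Lemma~\ref{lem2.16: > finite sum} applies at once and $X$ is a finite sum of projections. So assume $\tau(X)=1$. Then the two inequalities above are forced to be equalities, i.e.\ $\sum_i\tau(E_i)=\tau(p)$ and $\sum_j\tau(F_j)=\tau(q)$. Combining $\tau(A\vee B)\le\tau(A)+\tau(B)$ with normality of $\tau$, this countable trace additivity propagates to every tail: $\sum_{i>N}\tau(E_i)=\tau\!\big(\bigvee_{i>N}E_i\big)$ and $\sum_{j>N}\tau(F_j)=\tau\!\big(\bigvee_{j>N}F_j\big)$ for all $N$. This is precisely the ingredient that makes Lemmas~\ref{lem2.17: trace= 1+mu finite sum} and~\ref{lem2.18: trace = 1- lambda = finite sum} applicable to far tails.

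Then I would peel twice. Since $\sum_j\tau(F_j)<\infty$ and $\mu\tau(E_1)>0$, fix $N\ge 2$ large enough that $\lambda\sum_{j>N}\tau(F_j)\le\mu\tau(E_1)$, and (noting $\sum_{j>N}\tau(F_j)>0$) choose a nonzero projection $E_1'\le E_1$ with $\mu\tau(E_1')=\lambda\sum_{j>N}\tau(F_j)$. Put $X_1:=(1+\mu)E_1'+(1-\lambda)\sum_{j>N}F_j$. Its hypotheses for Lemma~\ref{lem2.17: trace= 1+mu finite sum} hold: orthogonality because $E_1'\le p$, $\bigvee_{j>N}F_j\le q$ and $p\bot q$; the trace equality by the choice of $E_1'$; $(1-\lambda)\sum_{j>N}F_j\le\bigvee_{j>N}F_j$ is (4) of the present lemma applied to the tail; and $\sum_{j>N}\tau(F_j)=\tau(\bigvee_{j>N}F_j)$ is the tail additivity above. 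Hence $X_1$ is a finite sum of projections. Symmetrically, fix $M\ge 2$ large enough that $\mu\sum_{i>M}\tau(E_i)\le\lambda\tau(F_1)$, choose a nonzero projection $F_1'\le F_1$ with $\lambda\tau(F_1')=\mu\sum_{i>M}\tau(E_i)$, and put $X_2:=(1-\lambda)F_1'+(1+\mu)\sum_{i>M}E_i$; Lemma~\ref{lem2.18: trace = 1- lambda = finite sum} applies, since (3) of the present lemma with $N=M$ gives $\bigvee_{i>M}E_i\le(1+\mu)\sum_{i>M}E_i$ and tail additivity gives $\sum_{i>M}\tau(E_i)=\tau(\bigvee_{i>M}E_i)$. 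Thus $X_2$ is a finite sum of projections. Finally,
\[
X_3:=X-X_1-X_2=(1+\mu)\Big((E_1-E_1')+\sum_{i=2}^M E_i\Big)+(1-\lambda)\Big((F_1-F_1')+\sum_{j=2}^N F_j\Big)
\]
is a \emph{finite} positive combination of projections (discard any term that is zero; if $X_3=0$ there is nothing to do). A direct computation using $\mu\tau(E_1')=\lambda\sum_{j>N}\tau(F_j)$, $\lambda\tau(F_1')=\mu\sum_{i>M}\tau(E_i)$ and (2) gives
\[
\mu\Big(\tau(E_1-E_1')+\sum_{i=2}^M\tau(E_i)\Big)=\lambda\Big(\tau(F_1-F_1')+\sum_{j=2}^N\tau(F_j)\Big),
\]
so the hypothesis of Lemma~\ref{lem2.7: finite non orth proj finite sum} is met (with equality, hence with $\ge$); therefore $X_3$ is a finite sum of projections, and so is $X=X_1+X_2+X_3$.

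The main obstacle is the boundary case $\tau(X)=1$. There is no positive slack to distribute, so each peeled block must be balanced \emph{exactly}, which is why the strict Lemmas~\ref{lem2.14:1+mu E finite sum} and~\ref{lem2.15:1- lambda F finite sum} must be replaced by the equality Lemmas~\ref{lem2.17: trace= 1+mu finite sum} and~\ref{lem2.18: trace = 1- lambda = finite sum}; those in turn require trace-additivity hypotheses, and the key observation is that $\tau(X)=1$ is exactly what upgrades $\sum_i\tau(E_i)\ge\tau(p)$ and $\sum_j\tau(F_j)\ge\tau(q)$ to equalities, after which one must still check that additivity passes to tails. The remainder---choosing $N,M$ large, checking that the subprojections $E_1',F_1'$ are nonzero, and the trace cancellation for $X_3$---is routine bookkeeping.
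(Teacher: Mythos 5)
Your proposal is correct, and its core is the same peeling decomposition the paper uses: write $X=X_1+X_2+X_3$ with $X_1=(1+\mu)E_1'+(1-\lambda)\sum_{j>N}F_j$, $X_2=(1-\lambda)F_1'+(1+\mu)\sum_{i>M}E_i$, handle $X_1,X_2$ by Lemmas~\ref{lem2.17: trace= 1+mu finite sum} and~\ref{lem2.18: trace = 1- lambda = finite sum}, and finish $X_3$ with Lemma~\ref{lem2.7: finite non orth proj finite sum} after the exact trace cancellation (the paper achieves the balancing by a ``without loss of generality'' renormalization of the first terms; you achieve it by shrinking $E_1$ and $F_1$ to $E_1'$, $F_1'$, which is the same manoeuvre). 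The genuine difference is your preliminary reduction: you pass to the corner $(p+q)\M(p+q)$, split into the cases $\tau(X)>\tau(p+q)$ (settled at once by Lemma~\ref{lem2.16: > finite sum}) and $\tau(X)=\tau(p+q)$, and in the latter case you deduce $\sum_i\tau(E_i)=\tau(\bigvee_i E_i)$, $\sum_j\tau(F_j)=\tau(\bigvee_j F_j)$ and propagate this additivity to all tails. This step is valuable: Lemmas~\ref{lem2.17: trace= 1+mu finite sum} and~\ref{lem2.18: trace = 1- lambda = finite sum} each carry a hypothesis (their condition (4), trace additivity of the family) that is \emph{not} among the stated hypotheses (1)--(4) of the present lemma and is not verified in the paper's proof; in the paper it holds implicitly because the lemma is only invoked in Lemma~\ref{lem2.20:tau X=1 X is finite sum} for the projections of Remark~\ref{rem2.10: x geq 0 constru}, which satisfy Lemma~\ref{p_i,q_j property}. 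Your argument proves the lemma exactly as stated, at the cost of the extra case distinction, and your peeling of subprojections $E_1'\le E_1$, $F_1'\le F_1$ (rather than splitting a tail projection $F_{n_1-1}$ as in the proof of Lemma~\ref{lem2.16: > finite sum}) also keeps the tails untouched, so conditions (3)--(4) pass to $X_1$ and $X_2$ verbatim. All the bookkeeping you outline (nonzeroness of $E_1'$, $F_1'$, convergence of the trace series, the identity $\mu(\tau(E_1-E_1')+\sum_{i=2}^M\tau(E_i))=\lambda(\tau(F_1-F_1')+\sum_{j=2}^N\tau(F_j))$) checks out.
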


\begin{proof}
Without loss of generality, we may assume that there exists $N,M \in \NNN$ large enough such that
\[
\mu\tau(E_1)=\lambda\sum_{j=N+1}^\infty \tau(F_j) \ \ \text{and} \ \ \mu\sum_{i=M+1}^\infty \tau(E_i)=\lambda \tau(F_1).
\]
Set
\[X_1:=(1+\mu)E_1+(1-\lambda)\sum_{j=N+1}^\infty F_j;\]
\[ X_2:=(1-\lambda)F_1+(1+\mu)\sum_{i=M+1}^\infty E_i; \]
\[X_3:= \sum_{i=2}^M (1+\mu)E_i + (1-\lambda) \sum_{j=2}^N F_j.
\]
Then $X=X_1+X_2+X_3$. By Lemma \ref{lem2.17: trace= 1+mu finite sum} and Lemma \ref{lem2.18: trace = 1- lambda = finite sum}, $X_1$ and $X_2$ are finite sums of projections. Now, we
consider
$X_3$. Since
\[
\begin{aligned}
& \mu \sum_{i=2}^M \tau(E_i) - \lambda\sum_{j=2}^N \tau(F_j)\\
& =\left(\mu \sum_{i=1}^\infty \tau(E_i) - \lambda \sum_{j=1}^\infty \tau(F_j)\right)-\left(\mu \tau(E_1) - \lambda\sum_{j=N+1}^\infty \tau(F_j)\right)-\left(\mu \sum_{i=M+1}^\infty
\tau(E_i) - \lambda \tau(F_1)\right)\\
& =0.
\end{aligned}
\]
Hence, by Lemma \ref{lem2.7: finite non orth proj finite sum}, $X_3$ is a finite sum of projections, so is $X$.
\end{proof}

\begin{lem}\label{lem2.20:tau X=1 X is finite sum}
Let $X\in \M^+$. If $\tau(X)=1$, then $X$ is a finite sum of projections in $\M$.
\end{lem}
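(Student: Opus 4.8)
The plan is to reduce Lemma~\ref{lem2.20:tau X=1 X is finite sum} to the structural lemmas already established, in exactly the same spirit as the proof of Lemma~\ref{lem2.16: > finite sum}, but dealing with the borderline case $\tau(X)=1$ using the equality-case lemmas (Lemmas~\ref{lem2.17: trace= 1+mu finite sum},~\ref{lem2.18: trace = 1- lambda = finite sum},~\ref{lem2.19: = finite}) instead of Lemmas~\ref{lem2.14:1+mu E finite sum} and~\ref{lem2.15:1- lambda F finite sum}. First I would dispose of degenerate situations: if $X$ is a projection or if $\tau(X_1)=\tau(p)$ (equivalently $\mu=0$), then by Remark~\ref{rem2.10': x geq 0 constru}, Case~1, $X=p+q$ is already a sum of two projections. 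So we may assume $\mu>0$ and hence $0<\lambda\le 1$.

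Next, invoke Remark~\ref{rem2.10: x geq 0 constru} together with Lemmas~\ref{q_j property},~\ref{p_i property} and~\ref{p_i,q_j property} to write
\[
X=(1+\mu)\sum_{i=1}^\infty E_i+(1-\lambda)\sum_{j=1}^\infty F_j,
\]
with $E_i,F_j$ nonzero projections satisfying: $(\bigvee_i E_i)\perp(\bigvee_j F_j)$; for all $N$, $\bigvee_{i=N+1}^\infty E_i\le (1+\mu)\sum_{i=N+1}^\infty E_i$ and $(1-\lambda)\sum_{j=N+1}^\infty F_j\le \bigvee_{j=N+1}^\infty F_j$; and $\sum_i\tau(E_i)=\tau(\bigvee_i E_i)$, $\sum_j\tau(F_j)=\tau(\bigvee_j F_j)$. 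The key point is that, by Remark~\ref{rem2.10': x geq 0 constru}, Case~1, the hypothesis $\tau(X)=1$ forces the \emph{equality}
\[
\mu\sum_{i=1}^\infty\tau(E_i)=\lambda\sum_{j=1}^\infty\tau(F_j).
\]
Thus all four hypotheses of Lemma~\ref{lem2.19: = finite} are met, and that lemma immediately gives that $X$ is a finite sum of projections.

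The main obstacle is really already absorbed into the machinery: verifying that the decomposition from Remark~\ref{rem2.10: x geq 0 constru} satisfies the tail conditions $\sum_{i=N+1}^\infty\tau(E_i)=\tau(\bigvee_{i=N+1}^\infty E_i)$ and the analogous statement for the $F_j$, which is exactly the content of Lemma~\ref{p_i,q_j property}(3)--(4), and the local-invertibility/domination tail bounds, which are Lemmas~\ref{p_i property} and~\ref{q_j property}. Once these are in hand, the proof is a direct citation of Lemma~\ref{lem2.19: = finite}; no further estimates are needed. One should be slightly careful about the trivial sub-case where one of the two families ($\{E_i\}$ or $\{F_j\}$) is empty: if there are no $F_j$ then $\lambda\sum\tau(F_j)=0=\mu\sum\tau(E_i)$ forces $\sum\tau(E_i)=0$, impossible unless $X=0\ne $ (excluded since $\tau(X)=1$), and symmetrically; alternatively this case is subsumed in the $\mu=0$ discussion above. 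Hence the only genuinely nontrivial case is the one handled by Lemma~\ref{lem2.19: = finite}.
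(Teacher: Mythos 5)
Your proposal is correct and follows essentially the same route as the paper's proof: decompose $X$ via Remark~\ref{rem2.10: x geq 0 constru} with the tail properties from Lemmas~\ref{q_j property}, \ref{p_i property}, \ref{p_i,q_j property}, deduce the equality $\mu\sum_i\tau(E_i)=\lambda\sum_j\tau(F_j)$ from $\tau(X)=1$, and conclude by Lemma~\ref{lem2.19: = finite}. Your extra attention to the degenerate cases ($\mu=0$ or one family empty) is a harmless refinement of what the paper absorbs into its ``avoid triviality'' conventions.
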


\begin{proof}
By Remark \ref{rem2.10: x geq 0 constru}, we can assume that
\[
X=(1+\mu)\sum_{i=1}^\infty p_i+ (1-\lambda)\sum_{j=1}^\infty q_j,
\]
where $\mu > 0, 0 < \lambda \leq 1$ and $p_i,q_j$ satisfy the properties of Lemma \ref{q_j property}, Lemma \ref{p_i property} and Lemma \ref{p_i,q_j property}. Furthermore, we have that
\[
1=\tau(X)=1+\mu\sum_{i=1}^\infty\tau(p_i)-\lambda\sum_{j=1}^\infty \tau(q_j).
\]
It implies that $\mu\sum_{i=1}^\infty\tau(p_i)=\lambda\sum_{j=1}^\infty \tau(q_j)$. Then by Lemma \ref{lem2.19: = finite}, it follows that $X$ is a finite sum of projections in $\M$.

\end{proof}

\begin{thm}\label{thm: main thm}
Let $(\M,\tau)$ be a type ${\rm II}_1$ factor,  $X\in \M$, $X=X^*$. Then we have the following results.
\begin{enumerate}
\item There is a decomposition of the identity into $N\in \NNN$ mutually orthogonal nonzero projections $E_j$, $I=\sum_{j=1}^NE_j$, for which $E_jXE_j=\tau(X) E_j$ for all $j=1,...,N$;
\item There is a finite dimensional abelian von Neumann algebra $\B \subseteq \M$ such that
      \[E_{\B^\prime \cap \M}(X)=\tau(X)I; \]
where $E_{\B^\prime \cap \M}$ is the conditional expectation from $\M$ onto $\B^\prime \cap \M$;
\item There is a unitary operator $U \in \M$ with
\[\frac{1}{N}\sum_{j=0}^{N-1}{U^*}^jXU^j=\tau(X)I;\]
\item There is a unitary operator $W \in \M$ with $W^N=I$ and
\[\frac{1}{N}\sum_{j=0}^{N-1}{W^*}^jXW^j=\tau(X)I.\]
\end{enumerate}
\end{thm}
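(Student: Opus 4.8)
The plan is to reduce the self-adjoint case to the positive case already handled in Lemmas~\ref{lem2.16: > finite sum} and~\ref{lem2.20:tau X=1 X is finite sum}. First I would dispose of trivial cases: if $\tau(X)=0$ but $X\neq 0$ one must still produce the decomposition, so instead of dividing by $\tau(X)$ I work with $X$ itself and aim to find mutually orthogonal nonzero projections $E_j$ summing to $I$ with $E_jXE_j=\tau(X)E_j$. The natural device is to pick a large scalar $t>\|X\|$ and set $A:=\frac{X+tI}{\,\tau(X)+t\,}$; then $A\in\M^+$ is invertible and $\tau(A)=1$. Note that $E_jXE_j=\tau(X)E_j$ holds for a family of mutually orthogonal projections summing to $I$ \emph{if and only if} $E_jAE_j=E_j$ for all $j$, i.e. $\sum_j E_jAE_j=I$. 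So statement (1) for $X$ is equivalent to statement (2) of Lemma~\ref{lem2.2:TFAE} for $A$, and $A$ meets the hypothesis $\tau(A)=1$.

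Next, by Lemma~\ref{lem2.20:tau X=1 X is finite sum}, since $A\in\M^+$ and $\tau(A)=1$, the operator $A$ is a finite sum of $N$ projections in $\M$; these projections are automatically nonzero once we discard any zero summands, and $N\in\NNN^*$. Now invoke Lemma~\ref{lem2.2:TFAE}: conditions (1)--(4) there are equivalent, so from ``$A$ is the sum of $N$ nonzero projections'' we obtain simultaneously the decomposition $I=\sum_{j=1}^N E_j$ with $\sum_j E_jAE_j=I$, the finite-dimensional abelian subalgebra $\B$ with $E_{\B'\cap\M}(A)=I$, and the unitary $U$ with $\frac1N\sum_{j=0}^{N-1}{U^*}^jAU^j=I$. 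Translating back through $A=\frac{X+tI}{\tau(X)+t}$ and using that $U^*U=I$, conditional expectations are unital and linear, and $\tau$-preserving, each of these yields the corresponding assertion for $X$: $E_jXE_j=\tau(X)E_j$, $E_{\B'\cap\M}(X)=\tau(X)I$, and $\frac1N\sum_{j=0}^{N-1}{U^*}^jXU^j=\tau(X)I$. This gives (1), (2), (3).

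For (4) I would appeal to the implication $(2)\Rightarrow(1)$ in the proof of Lemma~\ref{lem2.1:TFAE}: that argument shows that whenever one has a unitary $U$ with $\frac1N\sum_{j=0}^{N-1}{U^*}^jAU^j=I$ one can manufacture a unitary $W$ with $W^N=I$ achieving the same averaging identity, by passing to $W=f(U^N)^{-1}U$ where $f(e^{i\theta})=e^{i\theta/N}$. Since the averaging identity for $X$ is equivalent to that for $A$ (affine substitution again), the same $W$ works for $X$, giving $W^N=I$ and $\frac1N\sum_{j=0}^{N-1}{W^*}^jXW^j=\tau(X)I$. Alternatively one can read (4) directly out of the spectral-projection construction $W=\sum_{k=1}^N e^{2\pi ki/N}E_k$ using the $E_j$ from part (1).

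The only real content here is Lemma~\ref{lem2.20:tau X=1 X is finite sum}, which is already proved; the rest is bookkeeping with the affine change of variables $X\mapsto \frac{X+tI}{\tau(X)+t}$ and quoting Lemmas~\ref{lem2.1:TFAE} and~\ref{lem2.2:TFAE}. Thus the main (and essentially only) obstacle is confirming that the affine substitution genuinely transports \emph{all four} equivalent formulations back and forth without loss---in particular that no summand $E_j$ degenerates to zero and that $N$ can be taken in $\NNN^*$ (or $\NNN$ as stated, allowing the trivial $X=\tau(X)I$ case with $N=1$). Everything else is immediate.
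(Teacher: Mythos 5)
Your proposal is correct and follows essentially the same route as the paper: reduce the self-adjoint case to a positive, trace-one operator by an affine shift and scaling (the paper adds $\|X\|I$ and then divides by the trace, you combine both into $A=(X+tI)/(\tau(X)+t)$), apply Lemma~\ref{lem2.20:tau X=1 X is finite sum} to write $A$ as a finite sum of projections, and then transport all four formulations back through Lemmas~\ref{lem2.1:TFAE} and~\ref{lem2.2:TFAE}. The affine-substitution bookkeeping you flag is exactly how the paper's proof concludes, so there is no gap.
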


\begin{proof}
We first consider the case $X\geq 0$. Assume that $\tau(X)=s$. It implies that $\tau(\frac{X}{s})=1$. Then by Lemma \ref{lem2.20:tau X=1 X is finite sum}, $\frac{X}{s}$ is a finite sum of
$N$
projections. Thus by Lemma \ref{lem2.1:TFAE}, there is a unitary operator $U$ with
\[
\frac{1}{N}\sum_{i=0}^{N-1}{U^*}^i \left(\frac{X}{s}\right)U^i=I.
\]
Therefore,
\[
\frac{1}{N}\sum_{i=0}^{N-1}{U^*}^i X U^i=sI.
\]
Now, let $X$ be a self-adjoint operator. Since $-\|X\|I \leq X$, we have $\|X\|I+X\geq0$. By above argument, there exists a unitary operator $U$ with
\[
\frac{1}{N}\sum_{i=0}^{N-1}{U^*}^i (\|X\|I+X)U^i=\tau(\|X\|I+X)I=\|X\|I+sI.
\]
Thus
\[
\frac{1}{N}\sum_{i=0}^{N-1}{U^*}^i X U^i=sI=\tau(X)I.
\]
Therefore, by Lemma \ref{lem2.1:TFAE} and Lemma \ref{lem2.2:TFAE}, we have the theorem.
\end{proof}

The following theorem answers affirmatively Question 6.7 in \cite{HKNZ}.

\begin{thm}
Let $(\M,\tau)$ be a type ${\rm II}_1$ factor and $A\in \M^+$. Then $A$ is a finite sum of projections if and only if $\tau(A)\geq \tau(R_A)$.
\end{thm}

\begin{proof}
$``\Rightarrow"$. Let $A=P_1+\cdots +P_N$ with $P_i$ nonzero projections and $N\in \NNN$. By Kaplansky's parallelogram law [\cite{K-R}, Theorem, 6.1.7], we have
\[
\tau(R_A)=\tau\left(\bigvee_{i=1}^NP_i\right)\leq \sum_{i=1}^N\tau(P_i)=\tau(A).
\]

$``\Leftarrow"$. By considering $R_A\M R_A$, we may assume that $R_A=I$. Then $\tau(A)\geq 1$.
By Lemma \ref{lem2.16: > finite sum} and Lemma \ref{lem2.20:tau X=1 X is finite sum}, it
implies that $A$ is a finite sum of projections in $\M$.

\end{proof}

The following theorem answers affirmatively the Question 1.1 in \cite{DFS}.

\begin{thm}
Let $(\M,\tau)$ be a type ${\rm II}_1$ factor,  $X\in \M$, $X=X^*$ and $\tau(X)=0$. Then there exists a nilpotent element $Z \in \M$ such that $X=Re Z$, i.e., $X$ is the real part of a
nilpotent element $Z$.
\end{thm}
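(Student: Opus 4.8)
The plan is to use the Main Theorem (part (1)) to diagonalize $X$ in a very strong way and then build the nilpotent $Z$ blockwise. By Theorem~\ref{thm: main thm}(1) applied to the self-adjoint $X$ with $\tau(X)=0$, there is a decomposition of the identity $I=\sum_{j=1}^N E_j$ into $N$ mutually orthogonal nonzero projections with $E_j X E_j = \tau(X) E_j = 0$ for every $j$. Thus, writing $X$ as an $N\times N$ operator matrix $X=(X_{ij})$ with $X_{ij}=E_i X E_j$, all the diagonal blocks vanish: $X_{jj}=0$. The idea is then to cancel the lower-triangular part of this matrix against a nilpotent correction so that what remains is strictly upper triangular, hence nilpotent.

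Concretely, I would set $Z := \sum_{i<j} 2 X_{ij}$ (the doubled strictly-upper-triangular part of $X$ with respect to the grading $E_1,\dots,E_N$), or, depending on how the indices line up, $Z := X + i\,Y$ for a suitable self-adjoint $Y$ supported off the diagonal; let me describe the first, cleaner version. With $Z$ the doubled upper-triangular part, we have $\operatorname{Re} Z = \tfrac12(Z+Z^*) = \sum_{i<j} X_{ij} + \sum_{i<j} X_{ij}^* = \sum_{i<j} X_{ij} + \sum_{i>j} X_{ij} = \sum_{i\neq j} X_{ij} = X$, where the last equality uses $X_{jj}=0$ and $\sum_{i,j} X_{ij}=X$. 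So $Z$ has the right real part essentially by construction. It remains to check that $Z$ is nilpotent: since $Z=\sum_{i<j}2X_{ij}$ is strictly upper triangular for the finite orthogonal decomposition $I=\sum_{j=1}^N E_j$, the product of any $N$ such matrices vanishes, i.e. $Z^N=0$. This is the standard fact that a strictly triangular $N\times N$ matrix over any ring is nilpotent of index at most $N$; here the "ring" is $\M$ with the matrix units coming from partial isometries between the $E_j$ (or one argues directly that $E_{j_1}ZE_{j_2}ZE_{j_3}\cdots$ forces a strictly increasing chain of indices, impossible after $N$ steps).

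The one genuine subtlety — and the step I expect to need the most care — is that the $E_j$ need not be pairwise equivalent, so "upper triangular matrix over $\M$" must be handled honestly rather than by blithely invoking matrix algebra. The clean way is to observe $Z = P X P^{\perp}$-type expressions telescope: define $Q_k := \sum_{j=1}^k E_j$ (an increasing chain of projections $0=Q_0<Q_1<\dots<Q_N=I$), and note that the strictly-upper-triangular part of $X$ is $\sum_{k=1}^{N-1} (Q_k X (I-Q_k) - Q_{k-1}X(I-Q_{k-1}))$-style differences; more simply, $Z = \sum_{k=1}^{N-1} 2\,Q_k X Q_k^{\perp}$ up to regrouping, and any product $Z^N$ maps $\mathrm{range}(Q_k^\perp)$ into $\mathrm{range}(Q_{k-1}^\perp)$ after each factor applied on the appropriate side, so $N$ factors annihilate everything. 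I would also double-check the reduction to the case $\tau(X)=0$ is not needed beyond what is given (it is given), and that $Z\in\M$ — immediate since $Z$ is a finite sum of elements $E_iXE_j\in\M$. Finally, I would remark that the argument in fact produces $Z$ with $Z^N=0$ for an explicit $N$ coming from the Main Theorem, which is a mild bonus over the bare statement.
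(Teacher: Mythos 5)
Your proposal is correct and is essentially the paper's own proof: both apply part (1) of the main theorem to get $I=\sum_{j=1}^N E_j$ with $E_jXE_j=0$, write $X$ in block form with vanishing diagonal, and take the strictly upper-triangular part as the nilpotent $Z$ (the paper uses the undoubled part and writes $X=Z+Z^*$, while you double it so that $\operatorname{Re}Z=\tfrac12(Z+Z^*)=X$ — the same argument up to a factor of $2$). Your direct chain-of-indices justification of $Z^N=0$ is a fine way to make the "strictly triangular implies nilpotent" step honest without matrix units.
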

\begin{proof}
By Theorem \ref{thm: main thm}, there are $N$ mutually orthogonal nonzero projections $E_j$ such that $I=\sum_{j=1}^NE_j$ and $E_jXE_j=0$ for all $j=1,...,N$. Since $X$ is self-adjoint,
we
can write
\[
X=\begin{pmatrix}
0&X_{12}& X_{13}& \cdots &X_{1N}\\
X_{12}^*& 0 &X_{23} & \cdots &X_{2N}\\
X_{13}^*& X_{23}^* & 0 & \cdots &X_{3N} \\
\vdots & \vdots & \ddots & \ddots & \vdots \\
X_{1N}^*& X_{2N}^*& X_{3N}^* & \cdots & 0
\end{pmatrix}
\]
with respect to the decomposition $I=\sum_{j=1}^NE_j$. Set
\[
Z:=\begin{pmatrix}
0&X_{12}& X_{13}& \cdots &X_{1N}\\
0 & 0 &X_{23} & \cdots &X_{2N}\\
0& 0 & 0 & \cdots &X_{3N} \\
\vdots & \vdots & \ddots & \ddots & \vdots \\
0 & 0 & 0 & \cdots & 0
\end{pmatrix}.
\]
We have that $Z$ is a nilpotent operator with $X=Z+Z^*$. This shows that $X$ is the real part of $Z$.
\end{proof}

\section{Dixmier's averaging theorem for type ${\rm II}_1$ factors}

\begin{defn}
$A$ is said to be majorized by $B$, in notion $A \prec B$, if there are unitary operators $U_1,...,U_N \in \M$ such that
\[
\frac{1}{N}\sum_{i=1}^N U_i^* BU_i=A.
\]
\end{defn}
\begin{prop}
If $A \prec B$ and $B\prec C$, then $A\prec C$.
\end{prop}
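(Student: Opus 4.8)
The plan is to unwind the two definitions and compose the averaging operations. Write $A \prec B$ as $\frac{1}{N}\sum_{i=1}^N U_i^* B U_i = A$ for some unitaries $U_1,\dots,U_N \in \M$, and $B \prec C$ as $\frac{1}{M}\sum_{j=1}^M V_j^* C V_j = B$ for some unitaries $V_1,\dots,V_M \in \M$. The idea is simply to substitute the second expression for $B$ into the first.

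Carrying this out, I would compute
\[
A = \frac{1}{N}\sum_{i=1}^N U_i^* B U_i = \frac{1}{N}\sum_{i=1}^N U_i^* \left( \frac{1}{M}\sum_{j=1}^M V_j^* C V_j \right) U_i = \frac{1}{NM}\sum_{i=1}^N \sum_{j=1}^M (V_j U_i)^* C (V_j U_i).
\]
Since each $V_j U_i$ is a product of unitaries in $\M$, hence unitary, and the double sum runs over exactly $NM$ indices, relabelling the family $\{V_j U_i : 1 \le i \le N,\ 1 \le j \le M\}$ as $W_1,\dots,W_{NM}$ gives $\frac{1}{NM}\sum_{\ell=1}^{NM} W_\ell^* C W_\ell = A$, which is precisely $A \prec C$.

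There is no real obstacle here; the only thing to be careful about is the bookkeeping of the indices and the observation that the product of two unitaries is again a unitary in $\M$, so that the total number of unitaries used is the product $NM$ (not the sum). I would present the one-line display above and the relabelling remark as the complete proof.
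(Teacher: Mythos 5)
Your proof is correct and is essentially identical to the paper's: both substitute the averaging expression for $B$ into that for $A$ and observe that the resulting $NM$ operators $V_jU_i$ are unitaries in $\M$. Nothing further is needed.
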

\begin{proof}
If
\[
\frac{1}{N}\sum_{i=1}^N U_i^* BU_i=A \quad \text{and} \quad \frac{1}{M}\sum_{j=1}^M V_j^* CV_j=B,
\]
then
\[
\frac{1}{NM}\sum_{i=1}^N\sum_{j=1}^M U_i^*V_j^* CV_jU_i=A.
\]
\end{proof}

\begin{lem}\label{lem:diagonal non equal majori}
Set $B:=\begin{pmatrix}
E_1BE_1& \cdots &E_1BE_n \\
\vdots&     &  \vdots \\
E_nBE_1  &  \cdots   & E_nBE_n
\end{pmatrix}$
with respect to decomposition $I=E_1+\cdots+E_n$. Then $A\prec B$, where
$A=\begin{pmatrix}
E_1BE_1& \cdots & 0 \\
\vdots&   \ddots  &  \vdots \\
0  &  \cdots   & E_nBE_n
\end{pmatrix}$.
\end{lem}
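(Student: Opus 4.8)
The plan is to reduce the $n\times n$ case to repeated application of the $2\times 2$ averaging already available through Theorem~\ref{thm: main thm} (equivalently Lemma~\ref{lem2.1:TFAE}), by ``killing'' off-diagonal blocks one at a time. First I would recall the basic move: with respect to a two-block decomposition $E \oplus F$ of the identity, the unitary $W := E - F$ (a self-adjoint unitary) conjugates a block matrix $\begin{pmatrix} a & b \\ b^* & d\end{pmatrix}$ to $\begin{pmatrix} a & -b \\ -b^* & d\end{pmatrix}$, so that $\tfrac12\bigl(B + W^*BW\bigr)$ has the same diagonal blocks as $B$ but with the $E$--$F$ off-diagonal block erased. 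This shows $\begin{pmatrix} a & 0 \\ 0 & d\end{pmatrix} \prec \begin{pmatrix} a & b \\ b^* & d\end{pmatrix}$.

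The key step is to iterate this over all pairs of indices. Fix the decomposition $I = E_1 + \cdots + E_n$. For each unordered pair $\{k,\ell\}$ with $k<\ell$, let $W_{k\ell} := I - 2(E_k + E_\ell) + 2\,(E_k+E_\ell)\cdots$ — more cleanly, set $W_{k\ell}$ to be the self-adjoint unitary that equals $-I$ on $E_k$-range, $+I$ elsewhere among the diagonal, i.e. $W_{k\ell} = I - 2E_k$ restricted appropriately; conjugation by $I - 2E_k$ negates exactly the blocks $E_k B E_m$ and $E_m B E_k$ for $m\neq k$. Averaging $B$ with $(I-2E_k)^* B (I-2E_k)$ therefore removes the entire $k$-th row and column of off-diagonal blocks while preserving all diagonal blocks $E_m B E_m$. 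Applying this successively for $k = 1, 2, \dots, n-1$ (the last row/column is then automatically cleared, since removing rows/columns $1,\dots,n-1$ leaves only $E_n B E_n$ off the diagonal position, which is diagonal) produces $A$ as an average of $2^{n-1}$ unitary conjugates of $B$; hence $A \prec B$ by definition. I would present the unitaries explicitly as $U_\sigma = \prod_{k : \sigma(k) = -1}(I - 2E_k)$ indexed by sign vectors $\sigma \in \{\pm 1\}^{n-1}$ (extended by $\sigma(n)=+1$), and verify that
\[
\frac{1}{2^{n-1}}\sum_{\sigma} U_\sigma^* B U_\sigma = A
\]
because for each off-diagonal block $E_j B E_m$ ($j \neq m$) at least one of $j, m$ lies in $\{1,\dots,n-1\}$, so the corresponding coordinate of $\sigma$ ranges over $\pm 1$ and the block cancels in the sum, while each diagonal block appears with coefficient $1$.

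The main obstacle is purely bookkeeping: making sure the cancellation argument is airtight when $j$ and $m$ are both $\leq n-1$ (the block $E_j B E_m$ is negated precisely when exactly one of $\sigma(j), \sigma(m)$ is $-1$, so it is negated in half the terms and the block sums to zero) and when exactly one of $j,m$ equals $n$ (then the single free coordinate among $\{j,m\}\cap\{1,\dots,n-1\}$ controls the sign). Both cases give cancellation, so no off-diagonal block survives. I would note that this simultaneously proves the stronger fact that the diagonal compression of $B$ is majorized by $B$ using only $2^{n-1}$ unitaries, each a product of the simple symmetries $I - 2E_k$; no appeal to the deeper machinery of the paper is needed for this particular lemma, only the definition of $\prec$.
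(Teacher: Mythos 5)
Your proposal is correct and follows essentially the same route as the paper: both average $B$ over the $2^{n-1}$ diagonal sign unitaries (one coordinate's sign fixed, the rest ranging over $\pm 1$, which is exactly your $U_\sigma=\prod_{k:\sigma(k)=-1}(I-2E_k)$), so that each off-diagonal block picks up the factor $\sigma(j)\sigma(m)$ and cancels while the diagonal blocks survive with coefficient $1$. Your write-up in fact supplies the cancellation verification that the paper leaves implicit; the preliminary detour through pairwise symmetries $W_{k\ell}$ is unnecessary but harmless.
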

\begin{proof}
For the $n \times n$ diagonal matrix. Let the first position is 1 and the other position is 1 or -1 with respect to decomposition $I=E_1+\cdots+E_n$. Then there are $2^{n-1}$ unitary
operators denoted by $U_1,...,U_{2^{n-1}}$. It can prove that
\[
\frac{1}{2^{n-1}}\sum_{i=1}^{2^{n-1}}U_i^* B U_i=A.
\]
\end{proof}

\begin{lem}\label{lem:diagonal majori}
If $(E_{a,b})_{a,b\in B}$ is a self-adjoint system of $n\times n$ matrix units for a von Neumann algebra $\M$, it is known that
$\M\cong M_n(\mathds{C}) \otimes \N$ for some von Neumann algebra $\N$. In this algebra,
$A:=\begin{pmatrix}
\frac{A_1+A_2+\cdots+A_n}{n}& \cdots & 0 \\
\vdots&  \ddots   &  \vdots \\
0  &  \cdots   & \frac{A_1+A_2+\cdots+A_n}{n}
\end{pmatrix}
\prec
\begin{pmatrix}
A_1& \cdots & 0 \\
\vdots&  \ddots   &  \vdots \\
0  &  \cdots   & A_n
\end{pmatrix}:=B,$ where $A_i \in \N^+$ for $ i=1,...,n$.
\end{lem}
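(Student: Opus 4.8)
The plan is to average over the cyclic shift of the diagonal blocks. Write the given self-adjoint system of matrix units as $(E_{a,b})_{1\le a,b\le n}$, so that under the identification $\M\cong M_n(\CCC)\otimes\N$ we have $B=\sum_{i=1}^n E_{i,i}\otimes A_i$, while $A=\sum_{j=1}^n E_{j,j}\otimes\bigl(\frac1n\sum_{i=1}^n A_i\bigr)$. Note that each $A_i$, viewed as $1\otimes A_i$, commutes with every $E_{a,b}$, viewed as $E_{a,b}\otimes 1$.

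First I would introduce the unitary $P:=\sum_{i=1}^n E_{i+1,i}\in\M$, with indices read modulo $n$; this is the image of the cyclic permutation matrix under the canonical embedding $M_n(\CCC)\otimes 1\hookrightarrow\M$. Since the $E_{a,b}$ form a self-adjoint system of matrix units, $P^*=\sum_{i=1}^n E_{i,i+1}$ and a direct computation with the relations $E_{a,b}E_{c,d}=\delta_{b,c}E_{a,d}$ gives $P^*P=PP^*=I$, so $P$ is a unitary in $\M$.

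Next I would compute the conjugates $U_k:=P^k$ acting on $B$. From $E_{i,i}P=E_{i,i-1}$ and $P^*E_{i,i-1}=E_{i-1,i-1}$ one gets $P^*E_{i,i}P=E_{i-1,i-1}$, and iterating, $(P^*)^k E_{i,i}P^k=E_{i-k,i-k}$ (indices mod $n$). Since the $A_i$ commute with the matrix units, this yields
\[
(P^*)^k B\, P^k=\sum_{i=1}^n E_{i-k,i-k}\otimes A_i=\sum_{j=1}^n E_{j,j}\otimes A_{j+k}.
\]
Averaging over $k=0,1,\dots,n-1$ and using that, for each fixed $j$, the index $j+k$ runs over all residues mod $n$ as $k$ does, we obtain
\[
\frac1n\sum_{k=0}^{n-1}U_k^* B\, U_k=\sum_{j=1}^n E_{j,j}\otimes\Bigl(\frac1n\sum_{k=0}^{n-1}A_{j+k}\Bigr)=\sum_{j=1}^n E_{j,j}\otimes\Bigl(\frac1n\sum_{i=1}^n A_i\Bigr)=A,
\]
so $A\prec B$ by definition of majorization. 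The only point requiring care is the bookkeeping of indices modulo $n$ in the matrix-unit computation; there is no analytic difficulty at all, and in particular this step is purely algebraic and uses neither that $\M$ is a ${\rm II}_1$ factor nor the finite-dimensional algebra $\N$ being of any special type. (Alternatively one could average over all $n!$ permutation unitaries in $M_n(\CCC)\otimes 1$, but the cyclic group of order $n$ already suffices.)
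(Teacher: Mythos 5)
Your proof is correct and takes essentially the same route as the paper: both average $B$ over $n$ permutation unitaries sitting in $M_n(\CCC)\otimes 1\subseteq\M$ which permute the diagonal blocks so that every diagonal position sees each $A_i$ exactly once. Your choice of the powers of a single cyclic shift $P$, with the Latin-square bookkeeping carried out explicitly, is a cleaner and fully verified instance of the unitaries the paper merely lists.
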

\begin{proof}
Set $U_1=I$,
$U_i:=
\bordermatrix{%
       & 1         & 2       &\cdots   &i  &i+1 &\cdots           &n\cr
1    & 0         & 0         &\cdots   &1  &0   & \cdots           & 0\cr
2    & 0         & 0         &\cdots   &0  &1   & \cdots           & 0\cr
\vdots & \vdots  &\vdots     &\cdots   &\vdots  &\vdots    &\ddots & \vdots\cr
n+2-i & 1 &0     &\cdots               &0       &0         &\cdots & 0\cr
n+3-i & 0 & 1    &\cdots               &0       &0         &\cdots & 0\cr
\vdots & \vdots  &\vdots     &\cdots   &\vdots  &\vdots    &\ddots & 0\cr
n     & 0    & 0         &\cdots   &0           &0             &\cdots        &0
},$ where $i=2,...,n-1$,
and
$U_n=\begin{pmatrix}
0& \cdots & 1 \\
\vdots&  \ddots   &  \vdots \\
1  &  \cdots   & 0
\end{pmatrix}.$ Then it can prove that
\[
\frac{1}{n}\sum_{i=1}^n U_i^* BU_i=A.
\]
\end{proof}

\begin{prop}\label{prop:unitary equiva if A qrec B then A qrec C}
Let $A,B_1,B_2\in \M$ and $A \prec B_1$. If $B_1$ is unitary equivalent with $B_2$, then $A\prec B_2$.
\end{prop}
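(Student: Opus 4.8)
The plan is to unwind the definition of majorization and simply compose the given unitaries with the implementing unitary of the equivalence. First I would use the hypothesis $A \prec B_1$ to fix unitary operators $U_1,\dots,U_N \in \M$ with
\[
\frac{1}{N}\sum_{i=1}^N U_i^{*}B_1 U_i = A.
\]
Next I would write the unitary equivalence as $B_1 = V^{*}B_2 V$ for some unitary $V \in \M$, and substitute this into the displayed identity to obtain $\frac{1}{N}\sum_{i=1}^N U_i^{*}V^{*}B_2 V U_i = A$.

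Finally, I would set $W_i := V U_i$ for $i=1,\dots,N$, observe that each $W_i$ is a unitary operator in $\M$ (being a product of two unitaries), and conclude $\frac{1}{N}\sum_{i=1}^N W_i^{*}B_2 W_i = A$, which is exactly the statement $A \prec B_2$ by definition. There is essentially no obstacle here: the only thing to be slightly careful about is which side the equivalence unitary sits on, but since $A \prec B_1$ and $B_1$ unitarily equivalent to $B_2$ is a symmetric relation in $B_1$ versus $B_2$, either convention works after relabeling $V$ by $V^{*}$. This proposition is a routine bookkeeping lemma that will presumably be used in combination with the transitivity of $\prec$ (the preceding proposition) and Lemmas \ref{lem:diagonal non equal majori}, \ref{lem:diagonal majori}, together with Theorem \ref{thm: main thm}, to establish the stronger Dixmier-type averaging result for several operators simultaneously.
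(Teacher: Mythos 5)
Your proof is correct and follows essentially the same route as the paper: substitute $B_1 = V^{*}B_2V$ into the averaging identity and absorb $V$ into the unitaries $VU_i$. No issues.
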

\begin{proof}
Since $A \prec B_1$, then by definition, there are unitary operators $U_1,...,U_N \in \M$ such that
\[
\frac{1}{N}\sum_{i=1}^N U_i^* B_1U_i=A.
\]
As $B_1$ is unitary equivalent with $B_2$, then $B_1=V^*B_2 V$ for some unitary operator $V$.
Thus
\[
\frac{1}{N}\sum_{i=1}^N U_i^*V^* B_2VU_i=A.
\]

\end{proof}

The following lemma is a generalization of Corollary 7.3 of~\cite{And}.
\begin{lem}
If $(E_{a,b})_{a,b\in B}$ is a self-adjoint system of $n\times n$ matrix units for a von Neumann algebra $\M$, it is known that
$\M\cong M_n(\mathds{C}) \otimes \N$ for some von Neumann algebra $\N$. In the algebra $\M\cong M_n(\mathds{C}) \otimes \N$,

\[
\begin{pmatrix}
A_1       &  0  & \cdots &0\\
0         & A_2 & \cdots &0\\
\vdots    &\vdots &\ddots &\vdots\\
0         &0    &\cdots  &A_n
\end{pmatrix}
\prec
\begin{pmatrix}
A_1+A_2+\cdots+A_n& 0& \cdots &0\\
0                                    & 0& \cdots &0\\
\vdots                               & \vdots& \ddots &\vdots\\
0                                    &  0&  \cdots  &0
\end{pmatrix},
\] where $A_i \in \N^+$ for $i=1,2,...,n$
\end{lem}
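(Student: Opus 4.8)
The plan is to exhibit a positive operator $M\in\M$ that is unitarily equivalent to the right‑hand operator $C$ and whose diagonal blocks are exactly $A_1,\dots,A_n$, and then to invoke the two results just proved. Write $S:=A_1+\cdots+A_n\in\N^+$, so that the $(1,1)$‑block of $C$ is $S$ while every other block of $C$ vanishes, and let $E_i$ be the $i$‑th diagonal matrix unit of $\M\cong M_n(\mathds{C})\otimes\N$, so that $I=E_1+\cdots+E_n$ and $E_i\M E_i\cong\N$. If $M\in\M^+$ is unitarily equivalent to $C$ and satisfies $E_iME_i=A_i$ for all $i$, then Lemma~\ref{lem:diagonal non equal majori} gives $\mathrm{diag}(A_1,\dots,A_n)\prec M$, and Proposition~\ref{prop:unitary equiva if A qrec B then A qrec C} upgrades this to $\mathrm{diag}(A_1,\dots,A_n)\prec C$, which is precisely the assertion. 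Thus everything reduces to constructing such an $M$.

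I would look for $M$ of the form $M=V^{*}CV$ with $V\in\M$ unitary. Writing $W_i$ for the $(1,i)$‑block of $V$, a direct computation shows that the $(i,i)$‑block of $V^{*}CV$ equals $W_i^{*}SW_i$; hence it suffices to find $W_1,\dots,W_n\in\N$ with (i) $W_i^{*}SW_i=A_i$ for each $i$, and (ii) $\sum_{i=1}^{n}W_iW_i^{*}=I_\N$. Condition (ii) is exactly what is needed to complete the row $(W_1,\dots,W_n)$ to a unitary: the partial isometry $R\in\M$ whose only nonzero block‑row is $(W_1,\dots,W_n)$ satisfies $RR^{*}=E_1$, so its source projection $R^{*}R$ has the same trace as $E_1$; since $\M$ is a ${\rm II}_1$ factor, $I-R^{*}R$ and $I-E_1$ are Murray--von Neumann equivalent, and adding a partial isometry implementing this equivalence turns $R$ into a unitary $V$ whose first block‑row is $(W_1,\dots,W_n)$. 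For (i) I would apply the Douglas factorization to $A_i\le S$: there is a contraction $D_i\in\N$ with $A_i^{1/2}=S^{1/2}D_i$, and one may arrange $R_SD_iR_S=D_i$. Then $D_i^{*}SD_i=A_i$, and summing the identities $A_i=S^{1/2}D_iD_i^{*}S^{1/2}$ against $\sum_iA_i=S$ forces $\sum_iD_iD_i^{*}=R_S$. Finally put $W_1:=D_1+(I-R_S)$ and $W_i:=D_i$ for $i\ge 2$; since $S$ annihilates $I-R_S$ and $D_1$ has both supports under $R_S$, this preserves (i) and yields $\sum_iW_iW_i^{*}=R_S+(I-R_S)=I_\N$, so (ii) holds.

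The step requiring genuine care is the projection bookkeeping around $R_S$ in the non‑invertible case: justifying the normalization $R_SD_iR_S=D_i$, deducing $\sum_iD_iD_i^{*}=R_S$ from it (which uses that $S$ is injective on the range of $R_S$, hence $S^{1/2}$ has dense range there), and verifying that padding $W_1$ by $I-R_S$ leaves no surviving cross terms in $W_1W_1^{*}$. When $S$ is invertible all of this is immediate (take $W_i=S^{-1/2}A_i^{1/2}$, no padding needed). The only other ingredient worth isolating is the extension fact used in (ii)—that a row $(W_1,\dots,W_n)$ with $\sum_iW_iW_i^{*}=I_\N$ is the first block‑row of a unitary—which is where finiteness of $\M$ is used. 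Everything else is the formal manipulation of matrix units together with Lemma~\ref{lem:diagonal non equal majori} and Proposition~\ref{prop:unitary equiva if A qrec B then A qrec C}.
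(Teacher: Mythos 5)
Your argument is correct, and it shares the paper's overall strategy --- both proofs reduce the statement to the pinching lemma (Lemma \ref{lem:diagonal non equal majori}) plus the unitary-invariance of majorization (Proposition \ref{prop:unitary equiva if A qrec B then A qrec C}), by exhibiting an operator unitarily equivalent to $\mathrm{diag}(A_1+\cdots+A_n,0,\dots,0)$ whose diagonal blocks are exactly $A_1,\dots,A_n$ --- but the implementations differ genuinely. The paper works two blocks at a time: with $T=\bigl(\begin{smallmatrix}A^{1/2}&B^{1/2}\\0&0\end{smallmatrix}\bigr)$ it has $TT^{*}=\mathrm{diag}(A+B,0)$ while $T^{*}T$ has diagonal $(A,B)$, so finiteness of $\M$ (unitary equivalence of $TT^{*}$ and $T^{*}T$) plus pinching gives $\mathrm{diag}(A,B)\prec\mathrm{diag}(A+B,0)$, and the general case follows by grouping, transitivity and induction; no invertibility or range-projection issues ever arise. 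You instead do all $n$ blocks at once by building an explicit unitary $V$ whose first block-row $(W_1,\dots,W_n)$ satisfies $W_i^{*}SW_i=A_i$ and $\sum_iW_iW_i^{*}=I$, which forces you through Douglas factorization, the $R_S$ bookkeeping in the non-invertible case, and the row-completion argument (your use of finiteness, playing the role of $TT^{*}\sim_u T^{*}T$ in the paper). Your verification of these points is sound, so the proof stands; but note that the Douglas step is avoidable: taking $T$ to be the single block-row $(A_1^{1/2},\dots,A_n^{1/2})$ gives $TT^{*}=\mathrm{diag}(S,0,\dots,0)$ and $T^{*}T$ with $(i,j)$ entry $A_i^{1/2}A_j^{1/2}$, hence diagonal $(A_1,\dots,A_n)$, which is exactly the $n$-block version of the paper's trick and yields your intermediate operator $M$ with no contractions $D_i$, no $R_S$ normalization, and no case distinction on invertibility. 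In short: the paper's route buys elementary algebra at the cost of an induction; yours buys a one-shot construction at the cost of operator-theoretic care that, as it turns out, can be dispensed with.
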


\begin{proof}
First, we consider $2\times2$ matrix. Write
$\begin{pmatrix}
A+B&  0 \\
0    &  0
\end{pmatrix}=
\begin{pmatrix}
A^{\frac{1}{2}}& B^{\frac{1}{2}} \\
0    &  0
\end{pmatrix}
\begin{pmatrix}
A^{\frac{1}{2}}& 0 \\
B^{\frac{1}{2}}    &  0
\end{pmatrix},$ which is unitary equivalent with
$\begin{pmatrix}
A^{\frac{1}{2}}& 0 \\
B^{\frac{1}{2}}    &  0
\end{pmatrix}
\begin{pmatrix}
A^{\frac{1}{2}}& B^{\frac{1}{2}} \\
0    &  0
\end{pmatrix}.
$
By the proof of Lemma \ref{lem:diagonal non equal majori}, $\begin{pmatrix}
A& 0 \\
0    &  B
\end{pmatrix}\prec
\begin{pmatrix}
A& A^{\frac{1}{2}}B^{\frac{1}{2}} \\
B^{\frac{1}{2}}A^{\frac{1}{2}}    &  B
\end{pmatrix}
=\begin{pmatrix}
A^{\frac{1}{2}}& 0 \\
B^{\frac{1}{2}}    &  0
\end{pmatrix}
\begin{pmatrix}
A^{\frac{1}{2}}& B^{\frac{1}{2}} \\
0    &  0
\end{pmatrix}.$ Then by Proposition \ref{prop:unitary equiva if A qrec B then A qrec C}, we have $\begin{pmatrix}
A&  0 \\
0    &  B
\end{pmatrix}
\prec
\begin{pmatrix}
A+B&  0 \\
0    &  0
\end{pmatrix}.$ Next we will prove that
$\begin{pmatrix}
A&  0 &0\\
0    &  B&0\\
0  &0  &C
\end{pmatrix}
\prec
\begin{pmatrix}
A+B+C&  0 &0\\
0    &  0  &0\\
0    &0    &0
\end{pmatrix},$ where $A,B,C \in \N^+$. Similarly to the above argument.
\[\begin{pmatrix}
A+B+C&  0 &0\\
0    &  0  &0\\
0    &0    &0
\end{pmatrix}=
\begin{pmatrix}
(A+B)^{\frac{1}{2}}&  0 &C^{\frac{1}{2}}\\
0    &  0  &0\\
0    &0    &0
\end{pmatrix}
\begin{pmatrix}
(A+B)^{\frac{1}{2}}&  0 &0\\
0    &  0  &0\\
C^{\frac{1}{2}}   &0    &0
\end{pmatrix},
\] which is unitary equivalent with
\[
\begin{pmatrix}
(A+B)^{\frac{1}{2}}&  0 &0\\
0    &  0  &0\\
C^{\frac{1}{2}}   &0    &0
\end{pmatrix}
\begin{pmatrix}
(A+B)^{\frac{1}{2}}&  0 &C^{\frac{1}{2}}\\
0    &  0  &0\\
0    &0    &0
\end{pmatrix}.
\]
Thus
\[\begin{pmatrix}
A+B&  0 &0\\
0    &  0&0\\
0  &0  &C
\end{pmatrix}
\prec
\begin{pmatrix}
A+B&  0 &(A+B)^{\frac{1}{2}}C^{\frac{1}{2}}\\
0    &  0&0\\
C^{\frac{1}{2}}(A+B)^{\frac{1}{2}}  &0  &C
\end{pmatrix}=
\begin{pmatrix}
(A+B)^{\frac{1}{2}}&  0 &0\\
0    &  0  &0\\
C^{\frac{1}{2}}   &0    &0
\end{pmatrix}
\begin{pmatrix}
(A+B)^{\frac{1}{2}}&  0 &C^{\frac{1}{2}}\\
0    &  0  &0\\
0    &0    &0
\end{pmatrix}.
\]
Then we have
\[
\begin{pmatrix}
A+B&  0 &0\\
0    &  0&0\\
0  &0  &C
\end{pmatrix}
\prec
\begin{pmatrix}
A+B+C&  0 &0\\
0    &  0&0\\
0  &0  &0
\end{pmatrix}.
\]
Since by above argument,
\[
\begin{pmatrix}
A&  0 &0\\
0    &  B&0\\
0  &0  &C
\end{pmatrix}
\prec
\begin{pmatrix}
A+B&  0 &0\\
0    &  0&0\\
0  &0  &C
\end{pmatrix}.
\]
So we prove that
\[
\begin{pmatrix}
A&  0 &0\\
0    &  B&0\\
0  &0  &C
\end{pmatrix}
\prec
\begin{pmatrix}
A+B+C&  0 &0\\
0    &  0&0\\
0  &0  &0
\end{pmatrix}.
\]
By the same argument we have that
\[
\begin{pmatrix}
A_1       &  0  & \cdots &0\\
0         & A_2 & \cdots &0\\
\vdots    &\vdots &\ddots &\vdots\\
0         &0    &\cdots  &A_n
\end{pmatrix}
\prec
\begin{pmatrix}
A_1+A_2+\cdots +A_n& 0& \cdots &0\\
0                                    & 0& \cdots &0\\
\vdots                               & \vdots& \ddots &\vdots\\
0                                    &  0&  \cdots  &0
\end{pmatrix}.
\]
\end{proof}

\begin{prop}
Let $A\in \M$ be an injective positive map with dense range. Then the following conditions are equivalent.
\begin{enumerate}
\item
\[
\begin{pmatrix}
B &0\\
0 &0
\end{pmatrix}\prec
\begin{pmatrix}
A & 0 \\
0 & 0
\end{pmatrix};
\]
\item
\[
B \prec A.
\]
\end{enumerate}

\end{prop}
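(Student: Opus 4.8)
The plan is to prove the two implications separately; $(2)\Rightarrow(1)$ is formal, while all of the content sits in $(1)\Rightarrow(2)$, where the hypothesis that $A$ is injective (equivalently, has dense range) is used decisively. Throughout I work inside the finite von Neumann algebra $M_2(\M)$ of $2\times 2$ matrices over $\M$, with majorization understood relative to $M_2(\M)$ in the outer problem and relative to $\M$ in the inner one.

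For $(2)\Rightarrow(1)$ I would start from unitaries $u_1,\dots,u_N\in\M$ with $\frac{1}{N}\sum_{i=1}^N u_i^*Au_i=B$ and pass to the block-diagonal unitaries $W_i:=\begin{pmatrix}u_i&0\\0&I\end{pmatrix}\in M_2(\M)$. A one-line computation gives $\frac{1}{N}\sum_{i=1}^N W_i^*\begin{pmatrix}A&0\\0&0\end{pmatrix}W_i=\begin{pmatrix}B&0\\0&0\end{pmatrix}$, which is exactly $(1)$.

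For $(1)\Rightarrow(2)$ I would fix unitaries $W_1,\dots,W_N\in M_2(\M)$ realising $(1)$ and write $W_i=\begin{pmatrix}a_i&b_i\\c_i&d_i\end{pmatrix}$ with entries in $\M$. Expanding $W_i^*\begin{pmatrix}A&0\\0&0\end{pmatrix}W_i=\begin{pmatrix}a_i^*Aa_i&a_i^*Ab_i\\b_i^*Aa_i&b_i^*Ab_i\end{pmatrix}$ and reading off the $(2,2)$-entry of the averaging identity yields $\frac{1}{N}\sum_{i=1}^N b_i^*Ab_i=0$. Each summand equals $(A^{1/2}b_i)^*(A^{1/2}b_i)\ge 0$, so $A^{1/2}b_i=0$ for every $i$; since $A$ is injective so is $A^{1/2}$, whence $b_i=0$. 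Now each $W_i=\begin{pmatrix}a_i&0\\c_i&d_i\end{pmatrix}$ is unitary, so comparing $(1,1)$-entries in $W_iW_i^*=I$ gives $a_ia_i^*=I$; since $\M$ is finite (being a type ${\rm II}_1$ factor) the coisometry $a_i$ is a unitary, hence $a_i^*a_i=I$, and then the $(1,1)$-entry of $W_i^*W_i=I$, namely $a_i^*a_i+c_i^*c_i=I$, forces $c_i=0$. Thus $W_i=\begin{pmatrix}a_i&0\\0&d_i\end{pmatrix}$ with $a_i\in\M$ unitary, and the $(1,1)$-entry of the averaging identity reads $\frac{1}{N}\sum_{i=1}^N a_i^*Aa_i=B$, i.e.\ $B\prec A$.

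The main obstacle — indeed the only substantive point — is the step forcing the off-diagonal blocks $b_i$ to vanish, which is precisely where injectivity of $A$ enters; after that, finiteness of $\M$ upgrades the one-sided relation $a_ia_i^*=I$ to genuine unitarity of $a_i$, and the remainder is routine bookkeeping with $2\times2$ block matrices that I would not write out in full.
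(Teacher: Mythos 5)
Your proof is correct and follows essentially the same route as the paper's: expand the unitaries in $2\times2$ block form, use the $(2,2)$-entry together with injectivity of $A^{1/2}$ to kill an off-diagonal block, invoke finiteness of $\M$ to make the $(1,1)$-block a unitary, and read off the $(1,1)$-entry of the averaging identity. The only difference is that you also supply the easy direction $(2)\Rightarrow(1)$, which the paper omits, and your block bookkeeping is slightly more careful than the paper's (which has a harmless indexing slip on which off-diagonal block vanishes).
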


\begin{proof}
$(1)\Rightarrow(2)$. By hypothesis, there are unitary operators $U_1,...,U_N \in \M$ such that
\[
\frac{1}{N}\sum_{i=1}^N U_i^*\begin{pmatrix}
A & 0 \\
0 & 0
\end{pmatrix}
U_i=\begin{pmatrix}
B &0\\
0 &0
\end{pmatrix}.
\]
Set $U_i:=\begin{pmatrix}
U_{11}^i &U_{12}^i\\
U_{21}^i &U_{22}^i
\end{pmatrix}.$ Then we have
$\frac{1}{N}\sum_{i=1}^N (U_{21}^i)^*AU_{21}^i=0.$ Since $(U_{21}^i)^*AU_{21}^i \geq 0$ for every $i=1,...,N$, then
$(U_{21}^i)^*AU_{21}^i=0$ for every $i=1,...,N$. Write $(U_{21}^i)^*AU_{21}^i=(U_{21}^i)^*A^{\frac{1}{2}}A^{\frac{1}{2}}U_{21}^i$, thus $A^{\frac{1}{2}}U_{21}^i=0$. As $A$ is injective,
it implies that $U_{21}^i=0$ for every $i=1,..., N$. Since $U_i$ is a unitary operator, it implies that
$(U_{11}^i)^*U_{11}^i=I$ and $U_{11}^i(U_{11}^i)^*+U_{12}^i(U_{12}^i)^*=I$. As $\M$ is finite factor, then $(U_{11}^i)^*U_{11}^i=I$ implies that $U_{11}^i(U_{11}^i)^*=I$. So we have $U_{12}^i=0$
for every $i=1,...,N$. Therefore we conclude that
\[
\frac{1}{N}(U_{11}^i)^* AU_{11}^i=B.
\]
Then we have (2).
\end{proof}
\begin{cor}
Let $(\M,\tau)$ be a type $\rm {II}_1$ factor and $A\in \M$ be an injective positive map with dense range. Then the following conditions are equivalent.
\begin{enumerate}
\item
\[
\begin{pmatrix}
\tau(A)I &0\\
0 &0
\end{pmatrix}\prec
\begin{pmatrix}
A & 0 \\
0 & 0
\end{pmatrix};
\]
\item
\[
\tau(A)I \prec A.
\]
\end{enumerate}
\end{cor}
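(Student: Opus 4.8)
The plan is to derive this corollary directly from the preceding Proposition, of which it is the special case $B=\tau(A)I$. That Proposition asserts that, for an injective positive $A\in\M$ with dense range and any $B\in\M$, one has $\left(\begin{smallmatrix}B&0\\0&0\end{smallmatrix}\right)\prec\left(\begin{smallmatrix}A&0\\0&0\end{smallmatrix}\right)$ if and only if $B\prec A$. Since $\tau(A)I$ is an element of $\M$, and the hypotheses imposed on $A$ in the corollary are exactly those required in the Proposition, I would simply apply the Proposition with $B:=\tau(A)I$; this yields the equivalence of $(1)$ and $(2)$ at once.

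For completeness I would also record the trivial implication $(2)\Rightarrow(1)$ directly, without appealing to the full Proposition: if $\frac{1}{N}\sum_{i=1}^NU_i^*AU_i=\tau(A)I$ for unitaries $U_i\in\M$, then the block-diagonal unitaries $V_i:=\left(\begin{smallmatrix}U_i&0\\0&I\end{smallmatrix}\right)$ in $M_2(\mathds{C})\otimes\M$ satisfy $\frac{1}{N}\sum_{i=1}^NV_i^*\left(\begin{smallmatrix}A&0\\0&0\end{smallmatrix}\right)V_i=\left(\begin{smallmatrix}\tau(A)I&0\\0&0\end{smallmatrix}\right)$, which is $(1)$. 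So the only real content is $(1)\Rightarrow(2)$, which is exactly what the Proposition supplies: there injectivity of $A$ forces the $(2,1)$-corners of the averaging unitaries to vanish, and finiteness of $\M$ then forces those unitaries to be block diagonal, producing genuine unitaries in $\M$ that average $A$ to $\tau(A)I$.

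I expect no genuine obstacle here: the whole proof is the observation that the Proposition applies verbatim to the scalar operator $B=\tau(A)I$. The reason it is worth stating separately is its later use: combined with the Main Theorem it lets one replace the majorization $\tau(A)I\prec A$ by the same majorization for the amplification $\left(\begin{smallmatrix}A&0\\0&0\end{smallmatrix}\right)$, whose range projection is a proper subprojection of the identity, giving extra room in subsequent constructions.
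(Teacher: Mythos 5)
Your proposal is correct and matches the paper's (implicit) argument: the corollary is stated there without proof precisely because it is the special case $B=\tau(A)I$ of the preceding Proposition, which is exactly how you derive it. Your extra remark giving the direct block-diagonal verification of $(2)\Rightarrow(1)$ is fine but not needed.
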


The following theorem is a stronger version of Dixmier's averaging theorem for type ${\rm II}_1$ factors.
\begin{thm}
Let $(\M,\tau)$ be a type ${\rm II}_1$ factor and let $X_1,...,X_n\in \M$. Then there exist unitary operators $U_1,...,U_k\in\M$ such that
\[
\frac{1}{k}\sum_{i=1}^kU_i^{*}X_jU_i=\tau(X_j)I,\quad \forall 1\leq j\leq n.
\]
\end{thm}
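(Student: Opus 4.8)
The plan is to reduce to the self-adjoint case and then to peel off the operators one at a time, applying part (3) of the Main Theorem repeatedly and composing the resulting averages; the one point that makes this work is that a scalar operator is fixed by every unitary average, so later steps never undo earlier ones.

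\emph{Reduction to self-adjoint operators.} Writing $X_j = A_j + iB_j$ with $A_j = \tfrac{1}{2}(X_j+X_j^*)$ and $B_j = \tfrac{1}{2i}(X_j-X_j^*)$ self-adjoint, and noting that $T \mapsto \tfrac{1}{k}\sum_{i=1}^k U_i^* T U_i$ is complex-linear and that $\tau(X_j) = \tau(A_j) + i\tau(B_j)$, it is enough to produce unitaries $U_1,\dots,U_k$ that simultaneously average the $2n$ self-adjoint elements $A_1,\dots,A_n,B_1,\dots,B_n$ to their respective traces. So from now on I assume each $X_j$ is self-adjoint and argue by induction on $n$.

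\emph{The induction.} For $n=1$ this is exactly part (3) of the Main Theorem: there are $N\in\NNN^*$ and a unitary $U$ with $\tfrac{1}{N}\sum_{j=0}^{N-1}(U^j)^*X_1U^j = \tau(X_1)I$, so the unitaries $\{U^j\}_{j=0}^{N-1}$ work. Assume the claim for $n-1$ self-adjoint operators and let $X_1,\dots,X_n$ be self-adjoint. First I would choose, by the inductive hypothesis, unitaries $V_1,\dots,V_m$ with $\tfrac{1}{m}\sum_{i=1}^m V_i^*X_jV_i = \tau(X_j)I$ for $1\le j\le n-1$. Put $Y := \tfrac{1}{m}\sum_{i=1}^m V_i^*X_nV_i$; then $Y=Y^*$ and, by traciality of $\tau$, $\tau(Y)=\tau(X_n)$. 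Applying part (3) of the Main Theorem to $Y$ gives $M\in\NNN^*$ and a unitary $W$ with $\tfrac{1}{M}\sum_{r=0}^{M-1}(W^r)^*YW^r = \tau(X_n)I$. Now take the $k:=mM$ unitaries $U_{i,r} := V_iW^r$. For $1\le j\le n-1$, since $\tfrac{1}{m}\sum_i V_i^*X_jV_i=\tau(X_j)I$ is central,
\[
\frac{1}{mM}\sum_{i=1}^m\sum_{r=0}^{M-1}(V_iW^r)^*X_j(V_iW^r) = \frac{1}{M}\sum_{r=0}^{M-1}(W^r)^*\bigl(\tau(X_j)I\bigr)W^r = \tau(X_j)I,
\]
while for $j=n$,
\[
\frac{1}{mM}\sum_{i=1}^m\sum_{r=0}^{M-1}(V_iW^r)^*X_n(V_iW^r) = \frac{1}{M}\sum_{r=0}^{M-1}(W^r)^*YW^r = \tau(X_n)I.
\]
Hence $\{U_{i,r}\}$ averages $X_1,\dots,X_n$ simultaneously to their traces, which closes the induction and proves the theorem.

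\emph{Where the content is.} Essentially all the difficulty sits in the Main Theorem, and the present deduction is short; the only subtlety is the interference issue, which is dissolved by the triviality $\tfrac{1}{M}\sum_r(W^r)^*(cI)W^r = cI$, so that once $X_1,\dots,X_{n-1}$ have been turned into scalars they are unaffected by the averaging that handles $X_n$. Grouping the two successive averages into one is precisely the computation that proves transitivity of $\prec$ in the proposition above, performed here on a whole tuple at once.
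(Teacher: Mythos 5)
Your proposal is correct and follows essentially the same route as the paper: reduce to self-adjoint operators (the paper dismisses this with ``clearly''), then peel off the operators one at a time by induction, applying part (3) of the Main Theorem and composing the two averages, the key point in both arguments being that scalar multiples of $I$ are fixed by any further unitary averaging. The only cosmetic difference is that you phrase the base case via powers of a single unitary, while the paper simply takes a finite family of unitaries; the computation with the products $V_iW^r$ is the same as the paper's with $W_iV_j$.
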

\begin{proof}
Clearly, we may assume that $X_i=X_i^*$ for $1\leq i\leq n$. For $X_1$, by Theorem~\ref{thm: main thm}, there exist unitary operators $V_1,...,V_r$ such that
\[
\frac{1}{r}\sum_{i=1}^rV_i^{*}X_1V_i=\tau(X_1)I.
\]
Let
\[
A=\frac{1}{r}\sum_{i=1}^rV_i^{*}X_2V_i.
\]
Then  $A=A^*$ and $\tau(A)=\tau(X_2)$. By Theorem~\ref{thm: main thm}, there exist unitary operators $W_1,...,W_s$ such that
\[
\frac{1}{s}\sum_{i=1}^sW_i^{*}AW_i=\tau(X_2)I.
\]
Now it is easy to see that $W_iV_j$ satisfies
\[
\frac{1}{rs}\sum_{i=1}^s\sum_{j=1}^rW_i^{*}V_j^{*}X_k V_jW_i=\tau(X_k)I,\quad k=1,2.
\]
Now, by induction we have the theorem.
\end{proof}

\end{document}